\documentclass[11pt]{amsart}
\usepackage{lmodern}
\usepackage{amsmath, amsthm, amssymb, amsfonts}
\usepackage[normalem]{ulem}
\usepackage{hyperref}
\usepackage{mathtools}
\usepackage{stmaryrd}

\hypersetup{
    colorlinks=true,
    linkcolor=black,
    filecolor=magenta,      
    urlcolor=cyan,
    citecolor=magenta,
    pdftitle={Overleaf Example},
    pdfpagemode=FullScreen,
    }

\usepackage[
  backend=biber,
  style=alphabetic,
  sorting=nty,
  maxcitenames=50,
  maxnames=50
]{biblatex}

\renewbibmacro{in:}{}
\AtEveryBibitem{%
  \clearfield{issn} 
  \clearfield{doi} 
  \clearfield{isbn}
  \ifentrytype{online}{}{
    \clearfield{url}
  }
}

\DeclareSourcemap{
  \maps[datatype=bibtex]{
    \map{
      \step[typesource=arxiv,       typetarget=misc]
      \step[fieldset=entrysubtype, fieldvalue=arxiv]
    }
  }
}

\renewbibmacro{title}{%
  \ifboolexpr{
    test {\iffieldundef{title}}
    and
    test {\iffieldundef{subtitle}}
  }
    {}
    {\printtext[title]{%
       \printfield[titlecase]{title}%
       \setunit{\subtitlepunct}%
       \printfield{subtitle}}%
     \newunit}%
  \printfield{titleaddon}}

\DeclareFieldFormat[article]{title}{\mkbibemph{#1}}

\DeclareFieldFormat{journaltitle}{\textnormal{#1}}

\usepackage{mathrsfs}
\usepackage{graphicx}

\usepackage{verbatim} 
\usepackage{longtable}

\usepackage{mathtools}

\usepackage{tikz}
\usetikzlibrary{decorations.pathmorphing}
\tikzset{snake it/.style={decorate, decoration=snake}}

\usepackage{caption}

\usepackage{tikz-cd}
\usetikzlibrary{arrows}

\newsavebox{\pullback}
\sbox\pullback{
\begin{tikzpicture}%
\draw (0,0) -- (1ex,0ex);%
\draw (1ex,0ex) -- (1ex,1ex);%
\end{tikzpicture}}

\theoremstyle{plain}
\newtheorem{thm}{Theorem}[section]
\newtheorem{cor}[thm]{Corollary}
\newtheorem{lem}[thm]{Lemma}
\newtheorem{prop}[thm]{Proposition}

\newtheorem{question}[thm]{Question}

\theoremstyle{definition}
\newtheorem{defn}[thm]{Definition}

\theoremstyle{remark}
\newtheorem{rmk}[thm]{Remark}

\newcommand{\BM}{{\mathrm{BM}}}

\DeclareMathOperator{\Br}{Br}

\DeclareMathOperator{\Sing}{Sing}
\DeclareMathOperator{\Gal}{Gal}

\DeclareFontFamily{OT1}{rsfs}{}
\DeclareFontShape{OT1}{rsfs}{n}{it}{<-> rsfs10}{}
\DeclareMathAlphabet{\curly}{OT1}{rsfs}{n}{it}

\newcommand{\Pic}{\mathop{\rm Pic}\nolimits}

\DeclareMathOperator{\NS}{NS}
\DeclareMathOperator{\PGL}{PGL}
\DeclareMathOperator{\Bl}{Bl}

    \DeclareFontFamily{U}{wncy}{}
    \DeclareFontShape{U}{wncy}{m}{n}{<->wncyr10}{}
    \DeclareSymbolFont{mcy}{U}{wncy}{m}{n}
    \DeclareMathSymbol{\Sha}{\mathord}{mcy}{"58}

\makeatletter
\newcommand{\customlabel}[2]{%
   \protected@edef\@currentlabel{#2}\label{#1}%
}
\makeatother

\usepackage{tikz}
\usepackage{lmodern}
\usetikzlibrary{decorations.pathmorphing}

\begin{document}
\title[Brauer groups of smooth loci and torsors over relative Jacobians]{Brauer groups of smooth loci in linear systems and torsors over Jacobians of plane curves}
\date{\today}

\newcommand\blfootnote[1]{%
  \begingroup
  \renewcommand\thefootnote{}\footnote{#1}%
  \addtocounter{footnote}{-1}%
  \endgroup
}

\author[M.\ Hartlieb]{Moritz Hartlieb}
\address{Mathematisches Institut, Universität Bonn, Endenicher Allee 60, 53115
Bonn, Germany}
\email{hartlieb@math.uni-bonn.de}

\author[W.\ Pi]{Weite Pi}
\address{Mathematisches Institut, Universität Bonn, Endenicher Allee 60, 53115
Bonn, Germany}
\email{weitepi@math.uni-bonn.de}

\keywords{Brauer groups, torsors over Jacobians, K3 surfaces, cubic fourfolds.}

\begin{abstract}
    We study Brauer groups of the smooth loci in linear systems on simply connected smooth projective varieties. Under a suitable ampleness condition, we prove that the Brauer group is at most $\mathbb Z/2 \mathbb Z$. This applies when the underlying variety is the projective plane, a very general K3 surface, or a general cubic fourfold. As an application, we compute the Tate--Shafarevich group parametrizing torsors over the relative Jacobians of universal smooth plane curves. Our approach is via a study of the $2$-nodal locus in the linear system.
\end{abstract}

\maketitle

\setcounter{tocdepth}{1} 

\tableofcontents
\setcounter{section}{-1}

\section{Introduction}

Throughout, algebraic varieties are considered over the complex numbers $\mathbb C$.

\subsection{Overview and main results} Given an ample line bundle $L$ on a smooth projective variety $X$, the discriminant locus $\Delta_L$ parametrizes singular elements in the linear system $|L|$. We denote its complement by
\[
|L|_\mathrm{sm} \coloneq |L| \setminus \Delta_L,
\]
which parametrizes smooth divisors. We shall sometimes also write $B = |L|_\mathrm{sm}$ when there is no confusion.

Assume, for the moment, that $X$ is a surface. Denote by $\mathcal{C} \to |L|$ the universal curve, and $\mathcal{C}_B \to B$ (or by slight abuse of notation $\mathcal{C}/B$) its restriction to the smooth locus. Consider the relative Jacobian
\[
\mathcal{J} \coloneq \mathrm{Pic}^0_{\mathcal{C}/B} \to |L|_\mathrm{sm}
\]
whose fiber over a curve $[C]\in |L|_\mathrm{sm}$ is its Jacobian variety $\mathrm{Pic}^0(C)$. This is an abelian scheme. It is a classical problem to study its torsors over the base $B$, which is classified by the \textit{Tate--Shafarevich group}
\[
\Sha(\mathcal{J}) \coloneq H^1(B, \Pic^0_{\mathcal C / B}).
\]
Here we view $\mathrm{Pic}^0_{\mathcal{C}/B}$ as an étale sheaf of automorphisms of the fiberwise Jacobians.

\smallskip

A closely related invariant is the cohomological Brauer group $\mathrm{Br}(-)\coloneq H^2_{\textrm{ét}}(-,\mathbb G_m)$. Indeed, for the relative Jacobian $\mathcal{J} \to B$, we have exact sequences
\begin{equation}\label{eq: leray intro}
\cdots \to \mathrm{Br}(B) \to F^1 \mathrm{Br}(\mathcal{J}) \coloneq \ker\big(\mathrm{Br}(\mathcal{J}) \to H^0(B, R^2 f_* \mathbb G_m)\big) \to H^1(B, \mathrm{Pic}_{\mathcal{J}/B}) \to \cdots
\end{equation}
arising from the Leray spectral sequence applied to the push-forward of $\mathbb G_m$, and 
\begin{equation}\label{eq: exponential intro}
\cdots \to H^1(B, \mathrm{Pic}^0_{\mathcal{J}/B}) \to H^1(B, \mathrm{Pic}_{\mathcal{J}/B}) \to  H^1(B, \mathrm{NS}_{\mathcal{J}/B}) \to \cdots
\end{equation}
arising from the short exact sequence
$$0 \to \Pic^0_{\mathcal J / B} \to \Pic_{\mathcal J/B} \to \NS_{\mathcal J/B} \to 0$$ of \'etale sheaves.

The interconnections between Brauer groups, twisted sheaves, and Tate--Shafarevich twists for linear systems on K3 surfaces have been closely studied in the literature; see for example \cite{Markman, HM1, Mattei-Meinsma, HM2}. 
One of the primary goals of this article is to describe $\Sha(\mathcal{J})$ as well as its generators in the case of smooth plane curves; see Theorem~\ref{thm: tate-shafarevich}. We note that the quasi-projectivity of the base $B$ presents additional technicalities absent in the aforementioned 
K3 setting. Along the way, we obtain surprisingly effective control on the Brauer group of the base $B = |L|_\mathrm{sm}$. We phrase it in a slightly informal way here, which we view as the first main result of this article:

\begin{thm}\label{thm: br base intro}
    Let $X$ be a simply connected smooth projective variety and $L\in \mathrm{Pic}(X)$ be a ``sufficiently ample'' line bundle. Then $\mathrm{Br}(|L|_\mathrm{sm})$ is at most $\mathbb Z/ 2 \mathbb Z$.
\end{thm}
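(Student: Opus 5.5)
We prove the bound by computing $\Br(B)$, $B=|L|_\mathrm{sm}=\mathbb P^N\setminus\Delta_L$, from the local geometry of the discriminant near its generic point and near the generic point of the $2$-nodal locus. We use the standard description of the Brauer group of a smooth complex variety,
\[
0\to (\mathbb Q/\mathbb Z)^{b_2(B)-\rho(B)}\to \Br(B)\to H^3(B,\mathbb Z)_{\mathrm{tors}}\to 0 .
\]

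\textbf{Step 1: $H^2$ and $\Pic$.} For $L$ sufficiently ample, $\Delta_L$ is an irreducible hypersurface; it is the dual variety, and $X$ is not dual defective. Hence $\Pic(B)=\mathbb Z/\deg\Delta_L$. The Gysin/purity sequence for the pair $(\mathbb P^N,\Delta_L)$ shows that $H^1(B,\mathbb Z)=0$ and that $H^2(B,\mathbb Z)$ is generated by the restriction of the hyperplane class, modulo $\deg\Delta_L$. So $H^2(B,\mathbb Z)$ is torsion and all of it is algebraic. The divisible part of $\Br(B)$ therefore vanishes, and we are reduced to
\[
\Br(B)\cong H^3(B,\mathbb Z)_{\mathrm{tors}}\cong \mathrm{Hom}(H_2(B,\mathbb Z)_{\mathrm{tors}},\mathbb Q/\mathbb Z)
\]
by universal coefficients. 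Alternatively, $\Br(B)\subset H^2(B,\mathbb G_m)$ can be approached via purity: a class on $B$ is unramified away from $\Delta_L$, and its obstruction lives in residues along $\Delta_L$.

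\textbf{Step 2: homotopy groups of $B$.} By a Zariski--van Kampen/Lefschetz argument, $H_2(B)$ is controlled by $\pi_1(B)$ and the local structure of $\Delta_L$ in codimension one inside $\Delta_L$. Since $X$ is simply connected and $L$ is sufficiently ample, $\pi_1(B)$ is generated by meridians around $\Delta_L$, which are all conjugate because $\Delta_L$ is irreducible. The relations among meridians come from the codimension-two strata of $\Delta_L$: the cuspidal locus gives braid relations $aba=bab$, and the $2$-nodal locus gives commutation relations $ab=ba$. I would then compute $H_2(B,\mathbb Z)$ via the Hopf formula, or directly via a Mayer--Vietoris/purity sequence around the codimension-two strata. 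In that sequence, the contribution to $H^3(B)$ from the link of the $2$-nodal stratum is the key term.

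\textbf{Step 3: the main obstacle.} The crux is to show that the torsion in $H^3(B,\mathbb Z)$, equivalently the ramification data of Brauer classes along the codimension-two strata, is killed by $2$, and then to show that it has order at most $2$. The plan is to study the $2$-nodal locus $\Delta^{(2)}\subset \Delta_L$. Its normalization is a finite cover of degree two, ordering the nodes. Locally near a general point, $\Delta_L$ has two smooth branches meeting transversally. Globally, however, monodromy around $\Delta^{(2)}$ can swap the two branches, and this swap is precisely what produces a $\mathbb Z/2$ class. Concretely, the local fundamental group near a general point of $\Delta^{(2)}$ is $\mathbb Z^2$. A Brauer class on $B$ restricts to the link $S^1\times S^1\times(\text{transverse base})$, and because the two meridians are conjugate, its restriction must be compatible with the swap of the two nodes. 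Combining this with Step 1, which forces everything to come from the discriminant's degree data, the group of possible classes injects into $H^1$ of a double cover with coefficients twisted by the swap, which has order at most $2$. Making this injectivity rigorous is where I expect the difficulty. It requires that the cuspidal stratum contributes nothing: the braid relation kills the corresponding local $H^3$ torsion, since the local complement of a cusp is a trefoil-knot complement with $H^2=0$. It also requires that strata of codimension at least three in $\mathbb P^N$ do not affect $H^3(B)$ by purity. That last point needs an ampleness hypothesis ensuring the codimension-three strata are generic, and this is the content of ``sufficiently ample.''
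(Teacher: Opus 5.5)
There is a genuine gap. The step that actually produces the bound is missing, as you acknowledge in Step~3, and the tools you propose would not supply it.

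The Hopf formula in Step~2 computes $H_2(\pi_1(B),\mathbb Z)$. This is only the quotient of $H_2(B,\mathbb Z)$ by the Hurewicz image of $\pi_2(B)$, so it gives no upper bound on $H_2(B,\mathbb Z)_{\mathrm{tors}}$, and hence none on $H^3(B,\mathbb Z)_{\mathrm{tors}}$. The assertion that the relevant classes inject into a twisted $H^1$ of a double cover ``of order at most $2$'' is unsupported: no space or coefficient system is specified, and such groups are not bounded by $2$ in general.

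Step~1 also asserts without proof something as hard as the theorem itself. For $\Delta=\Delta_L$ irreducible, the Gysin sequence of $(\mathbb P^N,\Delta)$ reads
\[
0\to \mathbb Z/(\deg\Delta)\mathbb Z \to H^2(B,\mathbb Z)\to H^{\BM}_{2N-3}(\Delta,\mathbb Z)\to 0 .
\]
So ``$H^2(B,\mathbb Z)$ is generated by the hyperplane class'' (even just $b_2(B)=0$) is equivalent to the vanishing of $H^{\BM}_{2N-3}(\Delta)$. This is not automatic: in the paper's degree-$2$ K3 example, $\Delta$ is a plane curve with $H_1(\Delta)\neq 0$. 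It is exactly where very ampleness and the simple connectivity of $X$ must be used. In your sketch, simple connectivity enters only through generation of $\pi_1(B)$ by meridians, which holds for every hypersurface complement in $\mathbb P^N$.

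The missing idea is to resolve $\Delta$ by the universal singular locus $\Sigma=\{(D,p)\mid p\in\Sing(D)\}\subset |L|\times X$.
\begin{itemize}
\item For very ample $L$, $\Sigma$ is a projective bundle over $X$, so $H_{2\dim\Delta-1}(\Sigma,\mathbb Z/n)\cong H^1(X,\mathbb Z/n)=0$.
\item The map $\Sigma\to\Delta$ is a homeomorphism over $\Delta_1$, the divisors with exactly one singular point of any type. Cusps and all other unisingular strata are therefore absorbed automatically, with no local knot-complement analysis.
\item The paper works with $\mathbb Z/n$ coefficients instead of integral $H^3$. Kummer plus Gysin give $\Br(B)[n]\cong H_{2\dim\Delta-1}(\Delta,\mathbb Z/n)$.
\item The Borel--Moore sequence for $\Sigma_{\geq 2}\subset\Sigma$ over $\Delta_{\geq 2}\subset\Delta$ identifies this with the kernel of $H_{2\dim\Delta-2}(\Sigma_{\geq2})\to H_{2\dim\Delta-2}(\Sigma)\oplus H_{2\dim\Delta-2}(\Delta_{\geq 2})$.
\item Your node-swapping monodromy is precisely the hypothesis that $\overline{\Sigma_2}$ is irreducible. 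This makes the source cyclic, namely $\mathbb Z/n\cdot[\overline{\Sigma_2}]$.
\item Since $\overline{\Sigma_2}\to\overline{\Delta_2}$ has degree $2$, the generator maps to $([\overline{\Sigma_2}],2[\overline{\Delta_2}])$. Hence the kernel is at most $\mathbb Z/2$, and it is nonzero exactly when $[\overline{\Sigma_2}]=0$ in $H^2(\Sigma,\mathbb Z/2)$.
\end{itemize}

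Finally, strata of codimension $\geq 3$ in $\mathbb P^N$ are harmless for dimension reasons alone, so that is not what ``sufficiently ample'' is for. The hypotheses actually needed are very ampleness, density of $\Delta_1$ with $\operatorname{codim}(\Delta_\infty,\Delta)\geq\dim X+1$, $\overline{\Delta_2}$ being the only divisorial component of $\Delta_{\geq2}$, and irreducibility of $\overline{\Sigma_2}$.
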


We refer readers to Theorem~\ref{thm:brauer_base} for a more precise version of Theorem~\ref{thm: br base intro}, with a detailed discussion of what \textit{sufficiently ample} means and when this Brauer group actually vanishes. Our primary cases of interest are linear systems on $\mathbb P^2$, very general K3 surfaces, and general cubic fourfolds. In the case of plane curves, we obtain the following

\begin{thm}\label{thm: brauer P2}
    Let $L = \mathcal{O}_{\mathbb P^2}(d)$ with $d\geq 3$. We have
    \[
    \Br(|L|_\mathrm{sm}) \simeq \begin{cases}0 & \text{if } d \text{ is odd,} \\
    \mathbb Z / 2 \mathbb Z &\text{if } d \text{ is even.}\end{cases}
    \]
\end{thm}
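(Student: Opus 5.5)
The plan is to compute $\Br(U)$, where $U=|L|_\mathrm{sm}=\mathbb P^N\setminus\Delta$ with $N=d(d+3)/2$, via Gysin/purity along the discriminant. The discriminant $\Delta$ is an irreducible hypersurface. Let $\Delta_0\subset\Delta$ be the open locus of curves with exactly one ordinary node, and put $V=\mathbb P^N\setminus(\Delta\setminus\Delta_0)$. The complement $\Delta\setminus\Delta_0$ has codimension $\ge 2$ in $\mathbb P^N$, so purity gives $\Br(V)=\Br(\mathbb P^N)=0$, and $\Delta_0\subset V$ is a smooth divisor. The Gysin sequence
\[
0\to \Br(V)\to \Br(U)\xrightarrow{\ \partial\ } H^1(\Delta_0,\mathbb Q/\mathbb Z)\to H^3_{\Delta_0}\text{-term}\to\cdots
\]
therefore embeds $\Br(U)$ into $H^1(\Delta_0,\mathbb Q/\mathbb Z)=\mathrm{Hom}(\pi_1(\Delta_0)^{\mathrm{ab}},\mathbb Q/\mathbb Z)$, via the residue. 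The problem thus splits into two tasks: bounding the residues that can occur, and deciding which of them lift.

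For the upper bound, use the incidence description of $\Delta_0$. Sending a curve to its node gives a fibration $\Delta_0\to\mathbb P^2$. Its fiber over $p$ is an open subset of the linear space $\mathbb P^{N-3}$ of curves singular at $p$. What is removed from $\mathbb P^{N-3}$ is:
\begin{itemize}
\item the cusp divisor $Q_p$, where the $2\times 2$ Hessian at $p$ degenerates; this is a quadric;
\item the divisor $D_p$ of curves with a second singular point;
\item loci of codimension $\ge 2$.
\end{itemize}
Since $\mathbb P^2$ is simply connected, $\pi_1(\Delta_0)^{\mathrm{ab}}$ is generated by the images of meridians around $Q_p$ and around the components of $D_p$.

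The key local input is the study of $\Br(U)$ near a general $2$-nodal curve. There $\Delta$ has two smooth transversal branches, both lying in the same global $\Delta_0$. The residue of $\alpha\in\Br(U)$ along one branch, restricted to a meridian of the other branch, must be compatible (up to sign) with the symmetric situation, since the two branches carry the same character $\chi=\partial\alpha$. I expect this to force $\chi$ to vanish on the meridians around $D_p$. Consequently $\chi$ factors through $H_1(\mathbb P^{N-3}\setminus Q_p)=\mathbb Z/2$, and $\Br(U)\subseteq\mathbb Z/2\mathbb Z$.

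This is the step I expect to be the main obstacle, and it splits into two parts:
\begin{enumerate}
\item making the $2$-nodal compatibility argument rigorous (a residue-of-residue computation for a normal crossing pair whose branches lie in a single irreducible divisor);
\item controlling the cusp locus, where $\Delta$ is locally cuspidal and the meridian around $Q_p$ in $\Delta_0$ is genuinely nontrivial.
\end{enumerate}

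It remains to decide exactly when the $\mathbb Z/2$ character survives. The candidate character is the double cover of $\Delta_0$ obtained by taking a square root of the Hessian determinant at the node. This determinant is a section of a line bundle on $\Delta_0$, pulled back from the incidence variety, of the form $\mathcal O(2)\otimes(\text{stuff on }\mathbb P^2)^{\otimes(d-2)}$. I will check two things. For $d$ odd, a parity obstruction should show that this character is not a residue: either the bundle has no global square root compatible with the Gysin boundary map to the $H^3$ term, or the character fails the cusp compatibility. That gives $\Br(U)=0$. For $d$ even, I will construct the nontrivial class explicitly. For $f$ of even degree $d$, the double cover $w^2=f$ of $\mathbb P^2$ branched along $C$ gives a family of quadratic forms (equivalently a conic bundle) over $U$, and its even Clifford algebra defines a Brauer class. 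Its residue along $\Delta_0$ should be exactly the Hessian-sign double cover, since the local model at a node is $w^2=xy+t$. This class is therefore nonzero, and $\Br(U)\simeq\mathbb Z/2\mathbb Z$.
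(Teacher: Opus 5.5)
\emph{Verdict: the proposal has a genuine gap. Its two local analyses are reversed, so the candidate residue character is the wrong one.}

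Your purity set-up is fine. It is essentially a reformulation of the paper's identification $\Br(U)[n]\simeq H^{\BM}_{2\dim\Delta-1}(\Delta,\mathbb Z/n\mathbb Z)$. A residue character $\chi$ on $\Delta_0\subset\Sigma$ is determined by two values: $\chi(m_Q)$ on the meridian around the cusp divisor, and $\chi(m_D)$ on the meridian around the divisor $\overline{\Sigma_2}$ of curves with a second singular point. The problem is that both local analyses give the opposite of what you expect.

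\begin{itemize}
\item \textbf{Two-nodal curves.} Near a general $2$-nodal curve, $U$ is locally $(\mathbb C^*)^2\times(\text{ball})$. Its $H^2(-,\mathbb Z/n\mathbb Z)$ is generated by $e_x\cup e_y$, whose residues along the two branches are $e_y$ and $-e_x$. The two local sheets of $\Sigma$ lie on the same irreducible divisor $\overline{\Sigma_2}$; this is condition (iv), which the paper gets from $3$-jet ampleness. So you get $\chi(m_D)=-\chi(m_D)$, i.e.\ only $2\chi(m_D)=0$, not $\chi(m_D)=0$. This is the paper's observation $\iota[\overline{\Sigma_2}]=2[\overline{\Delta_2}]$, and it is exactly where the $\mathbb Z/2\mathbb Z$ comes from.
\item \textbf{Cuspidal curves.} Near a general cuspidal curve, $(|L|,\Delta)$ is locally $(\text{cusp})\times\mathbb C^{N-2}$. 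A small neighbourhood in $U$ is therefore homotopy equivalent to a trefoil knot complement, which has $H^2(-,\mathbb Z/n\mathbb Z)=0$. Every class is locally trivial there, hence $\chi(m_Q)=0$. The paper builds this in by putting cuspidal curves into $\Delta_1$: $\Sigma$ is smooth and $\Sigma_1\to\Delta_1$ is a homeomorphism, so the cusp locus never appears.
\end{itemize}

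So the only possible nonzero residue is the order-$2$ character that is ramified along $\overline{\Sigma_2}$ and unramified along the cusp divisor. This is the reverse of your candidate. Your bound $\Br(U)\subseteq\mathbb Z/2\mathbb Z$ is numerically right, but for the wrong reason.

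This invalidates the rest of the plan.
\begin{itemize}
\item The Hessian double cover is ramified along the cusp divisor, so it is never a residue.
\item It also cannot see the parity of $d$. The Hessian determinant at the node is a section of $\mathcal O(2H+(2d-6)h)$ on $\Sigma$, which is a square for every $d$; its square root just picks one of the two branches at the node. So the odd/even dichotomy you propose cannot be run with it.
\item The Clifford-algebra construction is not actually defined: $w^2=f$ is not a family of quadratic forms for $d\geq 4$. An explicit representative of the even-degree class is left open in the paper (Remark~\ref{rmk: finding Brauer-Severi}).
\end{itemize}

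What you need instead is the criterion that a nonzero class exists if and only if $[\overline{\Sigma_2}]=0$ in $H^2(\Sigma,\mathbb Z/2\mathbb Z)$. In the Borel--Moore formulation the passage from such a character to a Brauer class is automatic, since it uses only $H^3(\mathbb P^N,\mathbb Z/n\mathbb Z)=0$ and $\Br(\mathbb P^N)=0$. Then compute this class in $H^2(\Sigma,\mathbb Z/2\mathbb Z)=\mathbb Z/2\mathbb Z\cdot H\oplus\mathbb Z/2\mathbb Z\cdot h$:
\begin{itemize}
\item Its $H$-coefficient is the degree of $D_p$ in the fibre. By the Euler-characteristic count in a pencil with a fixed node, this is $3d^2-6d-4\equiv d \bmod 2$, which settles odd $d$.
\item For even $d$, the $h$-coefficient is found by pushing $[\overline{\Sigma_2}]$ forward to $|L|$ and using the Severi degree of $\overline{\Delta_2}$, which shows it is even.
\end{itemize}
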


Building on Theorem~\ref{thm: brauer P2}, we establish the following result:

\begin{thm}\label{thm: tate-shafarevich}
    For $d\geq 4$, the Tate--Shafarevich group of the relative Jacobian of the universal smooth plane curve of degree $d$ is generated by $\Pic^1_{\mathcal C / B}$, i.e., we have
    $$\Sha(\mathcal{J}) \simeq \mathbb Z / d \mathbb Z \cdot [\Pic^1_{\mathcal C / B}].$$
    For $d=3$, we have $\Sha(\mathcal{J}) \simeq \mathbb Z/6 \mathbb Z$ instead.
\end{thm}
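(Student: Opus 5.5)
Here is the plan. I would compute $\Sha(\mathcal J)=H^1(B,\Pic^0_{\mathcal C/B})$, with $B=|\mathcal O(d)|_{\mathrm{sm}}$, by comparing it with the Brauer group of the universal curve $\mathcal C_B$ through the Leray spectral sequence for $\pi\colon\mathcal C_B\to B$. This is the curve analogue of \eqref{eq: leray intro}.

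\textbf{Step 1: the relative Picard sequence.} Since $R^2\pi_*\mathbb G_m=0$ for a family of curves, Leray gives an exact sequence
\[
\Br(B)\to\Br(\mathcal C_B)\to H^1(B,\Pic_{\mathcal C/B})\to H^3(B,\mathbb G_m)\to H^3(\mathcal C_B,\mathbb G_m).
\]
The sheaf sequence $0\to\Pic^0_{\mathcal C/B}\to\Pic_{\mathcal C/B}\xrightarrow{\deg}\mathbb Z\to0$ then gives
\[
\mathbb Z=H^0(B,\mathbb Z)\xrightarrow{\delta}\Sha(\mathcal J)\to H^1(B,\Pic_{\mathcal C/B})\to H^1(B,\mathbb Z).
\]
Here $\delta(1)=[\Pic^1_{\mathcal C/B}]$. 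Also $H^1(B,\mathbb Z)=\mathrm{Hom}(\pi_1(B),\mathbb Z)=0$, because $\pi_1(B)$ is a quotient of the finite group $\mathbb Z/3(d-1)^2$ (Lyubeznik's computation of the fundamental group of the complement of the discriminant hypersurface).

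\textbf{Step 2: the order of $[\Pic^1]$.} The class $[\Pic^n]$ vanishes exactly when $\Pic^n_{\mathcal C/B}\to B$ has a section.
\begin{itemize}
\item Plane sections give the relative degree $d$ line bundle $\mathcal O(1)|_{\mathcal C}$, so $d[\Pic^1]=0$.
\item For $d\ge4$, the kernel of $\mathbb Z\to\Sha$ is at least the image of $\Pic(\mathcal C_B)\to\Pic^{\deg}$, i.e.\ the degrees of line bundles on the generic curve. By Franchetta-type results for plane curves, the generic degree-$d$ plane curve has Picard group generated by $\mathcal O(1)$. So the index is exactly $d$ and $[\Pic^1]$ has order exactly $d$. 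Strictly, "no degree-$n$ line bundle on the generic curve" only rules out a rational section of $\Pic^n$ up to the Brauer obstruction; I would get around this by noting that the period divides the index and handling the obstruction via $\Br(k(B))$ together with a monodromy argument.
\item For $d=3$ the fibres are genus $1$, and there are extra torsors coming from $\Br$. This is what the answer $\mathbb Z/6=\mathbb Z/3\oplus\mathbb Z/2$ suggests.
\end{itemize}

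\textbf{Step 3: controlling $H^1(B,\Pic_{\mathcal C/B})$.} This is the main obstacle. I would compute $\Br(\mathcal C_B)$ using the projection $\mathcal C_B\to\mathbb P^2$. Its fibre over $p$ is the open subset of the hyperplane $\{C\ni p\}$ consisting of smooth curves, i.e.\ the smooth locus of a linear system of curves through a point. Running the paper's $2$-nodal-locus argument (Theorem~\ref{thm:brauer_base}) on this fibration should show that $\Br(\mathcal C_B)$ is spanned by the pullback of $\Br(B)$, perhaps modulo one extra $\mathbb Z/2$ class. Theorem~\ref{thm: brauer P2} says $\Br(B)=\mathbb Z/2$ for $d$ even and $0$ for $d$ odd. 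So I expect the map $\Br(B)\to\Br(\mathcal C_B)$ to be surjective for $d\ge4$, which makes $\Br(\mathcal C_B)\to H^1(B,\Pic)$ zero.

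Then I need to show $H^1(B,\Pic_{\mathcal C/B})\to H^3(B,\mathbb G_m)$ is injective, i.e.\ that $H^3(B,\mathbb G_m)\to H^3(\mathcal C_B,\mathbb G_m)$ is injective up to image. I would try to prove this using a multisection of degree $d$ (a line section) and a norm/trace argument: the trace shows the kernel is $d$-torsion. I would then compare transcendental $H^3(-,\mathbb Z)$ torsion via the fibration over $\mathbb P^2$. I expect this part to require the most care.

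\textbf{Conclusion.} Combining the steps gives $\Sha(\mathcal J)=\mathbb Z/d\cdot[\Pic^1]$ for $d\ge4$.

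For $d=3$ the argument changes in one place. The extra $\mathbb Z/2$ should come from a Brauer class on $\mathcal C_B$ that does not come from $B$, or equivalently from the fact that the Jacobian fibration has the $2$-torsion multisection structure special to cubics. This $\mathbb Z/2$ combines with the $\mathbb Z/3$ generated by $[\Pic^1]$ to give $\mathbb Z/6$.
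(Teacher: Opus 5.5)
\textbf{There is a genuine gap, and it is in the step that carries the content of the theorem.}

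Grant the first half of your Step~3, namely surjectivity of $\Br(B)\to\Br(\mathcal C_B)$. This is true for $d\ge 4$: it is the case $n=1$ of the paper's isomorphism $\Br(B)\simeq\Br(\mathcal C_B^{(n)})$. Then exactness of the Leray sequence already makes $H^1(B,\Pic_{\mathcal C/B})\to H^3(B,\mathbb G_m)$ injective, so injectivity is not what needs proving. What you need is $H^1(B,\Pic_{\mathcal C/B})=0$, i.e.\ that $H^3(B,\mathbb G_m)\to H^3(\mathcal C_B,\mathbb G_m)$ is injective.

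The norm argument along a line section only shows that this kernel is killed by $d$, and that gives nothing: for $d\ge 4$ the theorem is exactly the assertion that this $d$-torsion group vanishes. ``Comparing transcendental torsion in $H^3$'' would require knowing the torsion of $H^3(B,\mu_n)$ for the complement of the discriminant, which nothing in your plan provides. The same obstruction remains for $d=3$. There you also give no computation of $\Br(\mathcal C_B)$; the paper needs an explicit resolution of the universal singular cubic and four test pencils to show $\Br(\mathcal C_B)\simeq\mathbb Z/2\mathbb Z$.

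\textbf{The missing idea is to bypass $H^3(B,\mathbb G_m)$ by working on $\mathcal J$.} Since $\mathcal J$ has a zero section, Leray gives $0\to\Br(B)\to F^1\Br(\mathcal J)\to H^1(B,\Pic_{\mathcal J/B})\to 0$. Together with $\Pic^0_{\mathcal J/B}\simeq\mathcal J$ and $\NS_{\mathcal J/B}=\mathbb Z\cdot\Theta$, this yields $0\to H^0(B,\NS_{\mathcal J/B})/H^0(B,\Pic_{\mathcal J/B})\to\Sha(\mathcal J)\to H^1(B,\Pic_{\mathcal J/B})\to 0$.

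The paper then bounds $\Br(\mathcal J)$ through the Abel--Jacobi Brauer--Severi fibration $\mathcal C_B^{(kd)}\to\mathcal J$. Its period is $d$ or $d/2$, read off from the generators $h^{(kd)}$ and $\delta$ of $\Pic(\mathcal C_B^{(kd)})$. Meanwhile $\Br(\mathcal C_B^{(kd)})\simeq\Br(\mathcal C_B^{(d)})\simeq\Br(B)$ by Lefschetz stabilization and the $2$-nodal argument. Together these give $|\Sha(\mathcal J)|\le d$.

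Your multisection idea does occur in the paper, but only for $d=3$. There a section of $\mathcal C_B^{(3)}\to B$ shows the obstruction is killed by $3$. This is combined with the independently obtained fact $2\cdot H^1(B,\Pic_{\mathcal C/B})=0$, which is exactly the coprime torsion information your plan lacks.

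Two smaller corrections:
\begin{enumerate}
\item $\pi_1(B)$ is not finite: the monodromy on $H^1$ of the fibres has finite index in $\mathrm{Sp}_{2g}(\mathbb Z)$. Only $H_1(B,\mathbb Z)\simeq\mathbb Z/3(d-1)^2\mathbb Z$ is finite. That suffices for your purpose, and \'etale $H^1(B,\mathbb Z)=0$ holds anyway for normal $B$.
\item In Step~2, ``period divides index'' bounds the order of $[\Pic^1_{\mathcal C/B}]$ from above, not from below. The right argument is that a section of $\Pic^k_{\mathcal C/B}$ lifts to a line bundle on $\mathcal C_B$ up to an obstruction in the $2$-torsion group $\Br(B)$. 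Since line bundles on $\mathcal C_B$ have relative degree divisible by $d$, this forces $d\mid 2k$. For even $d$ you must still rule out a section of $\Pic^{d/2}_{\mathcal C/B}$.
\end{enumerate}
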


The proof of Theorem~\ref{thm: tate-shafarevich} (in Sections~\ref{sec: symmetric products}--\ref{sec: appendix cubics}) uses the geometry of relative symmetric products $\mathcal{C}^{(n)}_B$ of universal smooth plane curves. For $n = kd$ with $k\geq d-2$, the Abel--Jacobi map
\[
\mathrm{AJ}: \mathcal{C}_B^{(kd)} \to \mathrm{Pic}^{-kd}_{\mathcal{C}/B} \simeq \mathrm{Pic}^0_{\mathcal{C}/B}
\]
is a fibration in projective spaces, i.e.~ a Brauer--Severi variety over $\mathcal{J} = \mathrm{Pic}^0_{\mathcal{C}/B}$. Its period can be computed via classical geometry. On the other hand, for $n\leq d$, we compute $\mathrm{Br}(\mathcal{C}^{(n)}_B)$ via a parallel argument\footnote{As an indication of this, we note that $\mathcal{C}_B^{(0)} =  |L|_\mathrm{sm}$.} as in Theorem~\ref{thm: brauer P2}, using crucially the fact that the evaluation map $\mathcal{C}^{[n]}\to (\mathbb P^2)^{[n]}$ between (relative) Hilbert schemes of points is a projective bundle. Furthermore, we establish an isomorphism
\[
\mathrm{Br}\Big(\mathcal{C}_B^{(kd)}\Big) \simeq \mathrm{Br}\Big(\mathcal{C}_B^{(d)}\Big)
\]
in Proposition~\ref{prop: brauer isom}. Combining this with the period calculation of $\mathcal{C}_B^{(kd)}$ for $k \geq d-2$ gives $\mathrm{Br}(\mathcal{J})$. Finally, we deduce Theorem~\ref{thm: tate-shafarevich} via a careful study of the exact sequences \eqref{eq: leray intro} and \eqref{eq: exponential intro}. The $d=3$ case is special due to the reducibility of the locus of curves having two distinct nodes, and is treated separately in Section~\ref{sec: appendix cubics}.

\begin{rmk}
    It is a natural problem to find the non-trivial Brauer--Severi variety for even degrees $d$ in Theorem~\ref{thm: brauer P2}, as well as the $2$-torsion twist for the Tate--Shafarevich group of  relative Jacobian of cubic curves; we do not treat these in the present article (see also Remark~\ref{rmk: finding Brauer-Severi}). 
\end{rmk}

Theorem~\ref{thm: br base intro} also applies to K3 surfaces and cubic fourfolds. In Sections~\ref{sec: K3} and~\ref{sec: cubic fourfold}, we prove the following results.

\begin{thm}\label{thm: brauer k3}
    Let $(S, L)$ be a primitively polarized K3 surface of Picard rank one and degree at least $10$. Then
    \[
    \mathrm{Br}(|L|_\mathrm{sm}) = 0.
    \]
\end{thm}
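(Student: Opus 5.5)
We will deduce the statement from the general Theorem~\ref{thm:brauer_base} (the precise form of Theorem~\ref{thm: br base intro}). The plan has three parts: check its ampleness hypotheses for $(S,L)$, reduce $\mathrm{Br}(|L|_\mathrm{sm})$ to an explicit $2$-torsion obstruction, and show that obstruction vanishes for K3 surfaces.

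First the general mechanism. Set $P=|L|$, $B=P\setminus\Delta_L$. Because $P$ is a projective space, $\mathrm{Br}(P)=0$ and $H^3(P,\mathbb Z)=0$. Purity gives
\[
0\to \mathrm{Br}(B)\to H^1(\Delta_L^{\mathrm{sm}},\mathbb Q/\mathbb Z),
\]
so classes are detected by residues along the discriminant. For the ampleness in question, $\Delta_L$ is an irreducible divisor whose general point is a curve with one node. Its smooth locus is a $\mathbb P^{N-2}$-bundle over an open subset of $S$. Since $S$ is simply connected, the local fundamental group around the codimension-one strata of $\Delta_L$ controls the residue. Those strata are cuspidal curves and binodal curves.

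Near a cuspidal curve, the discriminant is locally a cuspidal hypersurface, and the residue must be trivial. Near a binodal curve, $\Delta_L$ has two smooth branches crossing transversally. Residues then satisfy a compatibility which, after going around the normalization of the binodal locus, forces any nonzero residue to be $2$-torsion. This gives the bound $\mathbb Z/2\mathbb Z$. The nonvanishing case occurs precisely when the monodromy on the double cover of the binodal locus, given by swapping the two nodes, is not absorbed, i.e.\ the class of that double cover is a nontrivial character.

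Next, the hypotheses for a K3 surface of Picard rank one and degree $2g-2\ge 10$, so $g\ge 6$. We need $L$ to separate enough jets for the $2$-nodal locus $\Delta^{(2)}\subset|L|$ to be irreducible of codimension $2$, with general member having exactly two nodes in general position. This follows from $L$ being $3$-very ample, which holds by Knutsen's criterion for $k$-very ampleness on K3 surfaces, using $\mathrm{Pic}(S)=\mathbb Z L$ and $L^2\ge 10$. For irreducibility, $\Delta^{(2)}$ is dominated by a projective bundle over an open subset of $\mathrm{Hilb}^2(S)$, which is irreducible.

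Finally, vanishing. The potential $\mathbb Z/2$ class is governed by the double cover of $\Delta^{(2)}$ given by ordering the nodes, equivalently by the class $\delta/2$ on $\mathrm{Hilb}^2(S)$, where $\delta$ is the exceptional divisor. By the precise version of the theorem, the extra class exists iff a certain divisor class on $\mathrm{Hilb}^2$ twisted by $L$ is divisible by $2$. For $\mathbb P^2$ this is the parity of $d$. For a K3, I expect the relevant class to be $L^{[2]}-\delta$, or $3L$-type combinations coming from the node conditions. Since $L$ is primitive and $\mathrm{Pic}(S^{[2]})=\mathbb Z L\oplus\mathbb Z\delta$, this class is not $2$-divisible, which yields vanishing.

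The main obstacle is identifying exactly which class in $\mathrm{Pic}(S^{[2]})$ or $H^2(S^{[2]},\mathbb Z/2)$ governs the obstruction. The analogy with $\mathbb P^2$ suggests parity of $d$ arises from $K_{\mathbb P^2}+3L$-type terms. For a K3 with $K_S=0$, I would first compute the analogous class and check that primitivity of $L$ kills the $2$-divisibility.
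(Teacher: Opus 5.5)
There is a genuine gap at the decisive step: you never prove the vanishing, and the mechanism you guess for it is not the right one. In Theorem~\ref{thm:brauer_base} the obstruction is the class $[\overline{\Sigma_2}]\in H^2(\Sigma,\mathbb Z/2\mathbb Z)\simeq \mathbb Z/2\mathbb Z\cdot H\oplus H^2(S,\mathbb Z/2\mathbb Z)$. Here $\Sigma\to S$ is the projective bundle of pairs (curve, singular point), not $S^{[2]}$, and the parity of this class has nothing to do with primitivity of $L$. For a quartic K3 surface ($L$ primitive, Picard rank one, condition \ref{defn: ampleness} satisfied) one has $[\overline{\Sigma_2}]=80L$ and $\mathrm{Br}(|L|_\mathrm{sm})\simeq\mathbb Z/2\mathbb Z$. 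So no argument of the form ``$L$ is primitive, hence the relevant class is not $2$-divisible'' can be correct.

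The missing idea is to show that the $H$-coefficient of $[\overline{\Sigma_2}]$ is odd. This coefficient counts the binodal members of a general pencil in $|L|$ whose members share a node at a fixed point $p$. Blow up $p$ and the $L^2-4$ other base points to get a fibration $\widetilde S\to\mathbb P^1$. Its general fiber has Euler characteristic $2-2(g-1)$, and $e(\widetilde S)=e(S)+L^2-3$. Comparing the two gives the count $e(S)-7+3L^2+2K_S\cdot L$ (Lemma~\ref{lemma: relative degree}, which applies since $\dim|L|=g\ge 6$). For a K3 surface this is $17+3L^2$, which is odd because $L^2$ is even. Hence $[\overline{\Sigma_2}]\neq 0$ and $\mathrm{Br}(|L|_\mathrm{sm})=0$.

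Your verification of \ref{defn: ampleness} also does not go through as written. For $\mathrm{Pic}(S)=\mathbb Z L$, Knutsen's criterion gives $k$-very ampleness exactly when $L^2\ge 4k$, so $L$ is not $3$-very ample when $L^2=10$. Moreover, $3$-very ampleness only concerns subschemes of length $4$. To make the binodal incidence variety a projective bundle over pairs of distinct points, you need the length-$6$ schemes cut out by $\mathfrak m_p^2\mathfrak m_q^2$ to impose independent conditions; this is the $3$-jet ampleness used for $\mathcal O_{\mathbb P^2}(d)$, and it does not follow from $3$-very ampleness. With a bundle only over an open subset, you would still have to:
\begin{enumerate}
\item exclude other components of $\Delta_{\ge 2}$ of dimension $\dim|L|-2$ lying over the jumping locus;
\item show that the general such curve has exactly two nodes;
\item prove (iv), i.e.\ that $\overline{\Sigma_2}$ is irreducible, which amounts to monodromy exchanging the two nodes. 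You mention this monodromy but never establish it.
\end{enumerate}
The paper gets (ii) and (iii) from the irreducibility and density of the $1$- and $2$-nodal Severi varieties in the equigeneric loci on K3 surfaces (Dedieu--Sernesi, Bruno--Lelli-Chiesa, Ciliberto--Dedieu). It gets (iv) from a tacnodal degeneration (Galati--Knutsen): locally the curves are $y^2=(x^2-t)^2$, and a loop around $t=0$ swaps the two nodes.
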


In contrast to this theorem, we give examples of non-vanishing $\mathrm{Br}(|L|_\mathrm{sm})$ for low degree K3 surfaces in Section~\ref{sec: K3}. 

\begin{thm}\label{thm: brauer cubic fourfold}
    Let $X$ be a general smooth cubic fourfold. Then we have
    \[
    \mathrm{Br}(|\mathcal{O}_X(1)|_\mathrm{sm}) = 0.
    \]
\end{thm}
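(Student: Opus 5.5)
Here $B=|\mathcal{O}_X(1)|_\mathrm{sm}$ is the space of smooth hyperplane sections of $X$, and the plan is to run the general strategy behind Theorem~\ref{thm: br base intro} (Theorem~\ref{thm:brauer_base}). Write $N=\dim|\mathcal{O}_X(1)|=5$, so $|\mathcal{O}_X(1)|=(\mathbb P^5)^\vee$ and $\Delta_L=X^\vee$, the dual hypersurface. Since $X$ is a smooth cubic fourfold, $X^\vee$ is an irreducible hypersurface of degree $3\cdot 2^4=48$.

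\textbf{Step 1 (purity).} Use the Gysin/purity sequence for the open $B\subset\mathbb P^5$. Because $\Br(\mathbb P^5)=0$ and $H^3(\mathbb P^5,\mathbb Z)=0$, the Brauer group of $B$ is controlled by two things: $H^1$ of the smooth locus of the boundary divisor with $\mathbb Q/\mathbb Z$ coefficients, and the cokernel of $\Pic(\mathbb P^5)\to\mathbb Z\cdot[\Delta]$.
\begin{itemize}
\item The cokernel is $\mathbb Z/48$, and this is where $H^1(B,\mu_n)$ comes from.
\item For $\Br(B)$ itself, the relevant exact sequence is
\[
0\to \Br(B)\to H^1\big((X^\vee)^{\circ},\mathbb Q/\mathbb Z\big),
\]
possibly up to a correction from the codimension-$\geq2$ singular locus of $X^\vee$, which purity lets us discard. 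Here $(X^\vee)^\circ$ is the locus of hyperplane sections with exactly one ordinary node.
\end{itemize}
More precisely, a class in $\Br(B)$ has a residue, which is a $\mathbb Q/\mathbb Z$-torsor on $(X^\vee)^\circ$. The residue map has kernel $\Br(\mathbb P^5)=0$. So it suffices to show that no nonzero character of $\pi_1((X^\vee)^\circ)$ arises as a residue. If $\pi_1$ of the 1-nodal locus has abelianization at most $\mathbb Z/2$, the residue lands in $\mathbb Z/2$.

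\textbf{Step 2 (the 1-nodal locus and its local system).} The 1-nodal locus is dominated by $\mathcal{U}=\{(x,H): T_xX\subset H\}$. This is a $\mathbb P^0$-bundle over the open part of $X$ where the Hessian is nondegenerate, so it is birational to $X$. Since $X$ is simply connected and we only remove codimension-$\geq 1$ loci, $\pi_1^{ab}$ of $(X^\vee)^\circ$ is generated by loops around the removed divisors: the locus where the node degenerates (cusps, i.e. degenerate Hessian) and the 2-nodal locus. The paper's emphasis on the 2-nodal locus indicates the key mechanism. Near a 2-nodal point, the 1-nodal locus has two branches, and the monodromy swaps the vanishing-cycle orientation. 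This produces the potential $\mathbb Z/2$ coming from the orientation character of the vanishing cycle, i.e.\ the double cover given by choosing a sign of the quadratic form at the node.

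\textbf{Step 3 (killing the $\mathbb Z/2$) — the main obstacle.} We must show that the $\mathbb Z/2$ residue class does not extend. Equivalently, the degree-$2$ class on the 1-nodal locus must be incompatible with the requirement that the residue's ramification be trivial around the 2-nodal locus. Alternatively, the discriminant double cover must be trivial or not induced from $B$. In the $\mathbb P^2$ case (Theorem~\ref{thm: brauer P2}) this depended on the parity of $d$. Here the relevant quantity is the parity of the dimension of the hyperplane section: a cubic threefold, which is odd-dimensional. I expect Theorem~\ref{thm:brauer_base} to give vanishing exactly when either
\begin{itemize}
\item the determinant of the Hessian quadratic form twisted by $\mathcal{O}_X(1)$ is not a square (a parity condition), or
\item the 2-nodal locus is irreducible and the monodromy there is nontrivial.
\end{itemize}
Concretely, I would verify the following hypotheses of Theorem~\ref{thm:brauer_base} for $X$ general:
\begin{itemize}
\item $\mathcal{O}_X(1)$ separates 2-jets at pairs of points (true for a cubic: for $x\neq y$ with the line $xy\not\subset X$, the hyperplanes tangent at both points form a codimension-$8$ family, as expected);
\item the 2-nodal locus is irreducible, by the irreducibility of pairs of points of $X$ not joined by lines on $X$ (for general $X$, the Fano variety of lines has dimension $4$, so such pairs are dense);
\item the parity condition on $\mathcal{O}_X(1)$ in $\Pic(X)/2=\mathbb Z/2\cdot h$ that Theorem~\ref{thm:brauer_base} requires for vanishing, with the canonical class $K_X=-3h$ odd; this is analogous to odd $d$ for $\mathbb P^2$.
\end{itemize}
The hardest point is this last computation, matching the sign character to a class in $\Pic(X)/2$. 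Generality of $X$ is used to ensure $\Pic(X)=\mathbb Z h$ and the expected dimensions of the nodal strata.
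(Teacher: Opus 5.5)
Your skeleton (verify $(\dagger)$, then apply Theorem~\ref{thm:brauer_base}) matches the paper's, but the geometric input you propose for $(\dagger)$ is wrong. Suppose $X\cap H$ is singular at two points $p_1\neq p_2$. Then the line $p_1p_2$ meets $X$ with multiplicity at least $4>3$, so it lies in $X$. Since $T_{p_1}X=T_{p_2}X=H$, the Gauss map $\gamma$ is $2{:}1$ on this line, i.e.\ it is a line of the second type. Consequently, pairs of points not joined by a line of $X$ never give $2$-nodal sections; your own count, codimension $8$ in the $5$-dimensional $|\mathcal O_X(1)|$, says exactly that. Moreover, $H^0(\mathcal O_X(1))$ is $6$-dimensional, so it cannot surject onto the $10$-dimensional $\mathcal O/\mathfrak m_{p_1}^2\oplus\mathcal O/\mathfrak m_{p_2}^2$. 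Your irreducibility argument for the $2$-nodal locus therefore parametrizes the wrong space. What is actually needed is the following. First, $\Sigma\simeq X$ via $x\mapsto(T_xX\cap X,x)$. Second, $\Sigma_{\geq 2}=W$ is the threefold swept out by lines of the second type, which is irreducible for general $X$ (Huybrechts' book on cubic hypersurfaces). Third, a general hyperplane section with more than one singular point has exactly two (Gounelas--Kouvidakis). Together these give (iii), (iv), and the fact that $\gamma|_W\colon W\to\gamma(W)$ is generically of degree two.

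More seriously, your Step~3 never isolates the invariant that decides the answer. Since $\Sigma\simeq X$, Theorem~\ref{thm:brauer_base} says $\Br(B)=0$ if and only if $[W]\neq 0$ in $H^2(X,\mathbb Z/2)=\mathbb Z/2\cdot h$. Concretely, $\Br(B)[p]\simeq\ker\big(H_{\mathrm{top}}(W,\mathbb Z/p)\to H^2(X,\mathbb Z/p)\oplus H_{\mathrm{top}}(\gamma(W),\mathbb Z/p)\big)$, and the paper finishes with $[W]=75h$ (Gounelas--Kouvidakis), which is odd. None of the criteria you suggest (parity of $K_X$, parity of the dimension of the hyperplane sections, whether the Hessian discriminant is a square) is this class. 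The last one even points the wrong way. The double cover attached to the quadratic form at the node (a square root of its discriminant, i.e.\ a choice of ruling of the tangent cone) is branched along the parabolic divisor $R=X\cap\{\mathrm{Hess}(F)=0\}$, whose class $6h$ is even. So it is a nontrivial $\mathbb Z/2$-character on the $1$-nodal locus, and yet $\Br(B)=0$. It is not a residue because $\gamma$ restricts to a homeomorphism $X\setminus W\to\Delta\setminus\gamma(W)$. Hence $H^{\BM}_{\mathrm{top}-1}(\Delta,\mathbb Z/n)$ injects into $H^1(X\setminus W,\mathbb Z/n)$, and residues cannot ramify along $R$. The only candidate is a double cover branched along $W$, which exists if and only if $[W]$ is even. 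The same picture shows that the premise of your Step~1 fails. The $1$-nodal locus is homeomorphic to $X\setminus(R\cup W)$, whose $H^1(-,\mathbb Z)$ has positive rank because $R$ and $W$ are distinct divisors with classes in $H^2(X,\mathbb Z)=\mathbb Z h$. So its $\pi_1^{\mathrm{ab}}$ is infinite, and an injection of $\Br(B)$ into its $H^1$ decides nothing without the exact kernel description above.
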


Note that the linear systems $|L|$ in Theorems~\ref{thm: brauer k3} and \ref{thm: brauer cubic fourfold} are the bases of Lagrangian fibrations of compact hyperkähler varieties (of K3$^{[n]}$ and OG10 type, cf.~\cite{LSV, Bottini} for the latter). This leads to the natural question whether the smooth locus of a general Lagrangian fibration has vanishing Brauer group.

\subsection{Related works.} In the same spirit as in this article, Vassiliev \cite{Vassiliev} computed some rational cohomology groups of the space of
smooth hypersurfaces of fixed degree in $\mathbb P^n$; see also \cite{Orsola}. Antieau and Meier \cite{AM} computed the Brauer group of the moduli stack $\mathcal{M}_{1,1}$ of elliptic curves over $\operatorname{Spec} \mathbb Z$. Note that we have
\[
(\mathcal{M}_{1,1})_{\mathbb C} \simeq |\mathcal{O}_{\mathbb {P}^2}(3)|_\mathrm{sm} / \mathrm{PGL}_3(\mathbb C).
\]
More recently, di Lorenzo and Pirisi \cite{DiLorenzo-Pirisi} computed, over general algebraically closed fields, Brauer groups of the stack $\mathcal{X}_d$ of smooth plane curves of degree $d\geq 4$, with applications to the moduli stack $\mathcal{M}_3$ of smooth genus three curves. It would be interesting to explore whether our techniques can be applied to study the Brauer groups of the Deligne--Mumford stacks $\mathcal M_{g, n}$ and the Tate--Shafarevich groups of the universal Jacobian over $\mathcal{M}_g$. 

\subsection{Acknowledgments.} We would like to thank David Zhiyuan Bai, Alessio Bottini, Frank Gounelas, and Daniel Huybrechts for discussions related to the topics of this article. Both authors are supported by the ERC
Synergy Grant HyperK (ID~854361). M.\ H.\ is grateful for the support provided by the International Max Planck Research School on Moduli Spaces at the Max Planck Institute for Mathematics in Bonn.

\section{Brauer groups of the complement of discriminant loci}\label{sec: brauer group}

The goal of this section is to prove Theorem~\ref{thm:brauer_base}, a more precise version of Theorem~\ref{thm: br base intro}. We start by spelling out the ampleness condition in Theorem~\ref{thm: br base intro}. For this, we introduce a stratification on the discriminant locus. Let $\Delta_{\geq k}$ denote the closure in $\Delta \coloneq \Delta_L$ of the locus $k\textsf{-sing}$ parametrizing hypersurfaces in $|L|$ with {exactly} $k$ singular points. We define 
\[
\Delta_k \coloneq \Delta_{\geq k} \setminus \Delta_{\geq k+1},
\]
a family of locally closed subsets of $\Delta$. We further denote by $\Delta_\infty$ the locus of hypersurfaces having infinitely many singular points. The subsets $\{\Delta_k\}_{k=1}^\infty$ then give a stratification of $\Delta$. Note that the locus $k\textsf{-sing}$ are constructible; in particular, it contains a dense open subset of $\Delta_{\geq k}$, and a general element in $\Delta_k$ has exactly $k$ singular points.

\begin{rmk}\label{rmk: mu and delta}
    The relationship between our stratification and the more classical stratification via the Milnor number $\mu$, as well as via the $\delta$-invariant when $X$ is a surface, is encapsulated in the following chain of inclusions:
    \begin{equation}\label{eq: chain of inclusions}
    k \textsf{-nodal} \subset \Delta_k \subset \Delta_{\geq k} \subset \{\delta \geq k\} \subset \{\mu \geq k\},
    \end{equation}
where $k$\textsf{-nodal} denotes the locus parametrizing hypersurfaces with exactly $k$ ordinary nodes. On the other hand, an element in $\Delta_k$ need not have exactly $k$ singularities, nor is the equality $\overline{\Delta_k} = \Delta_{\geq k}$ in general true. For the purpose of this article, nevertheless, the readers may safely think of $\Delta_k$ as the ``$k$-nodal locus'' (see also Remark~\ref{rmk: strategy to check dagger} below).
\end{rmk}

Given a very ample $L$, consider the universal singular locus
     \[
     \Sigma\coloneq \{(D,p)\mid p\in \mathrm{Sing}(D)\} \subset |L|\times X
     \]
     and its second projection $\pi: \Sigma \to X$. The fiber $\pi^{-1}(p)$ at a point $p\in X$ is the projectivization of the kernel of the evaluation map
     \[
     \mathrm{ev}_p: H^0(X, L) \longrightarrow H^0(X, L\otimes \mathcal{O}_X/\mathfrak{m}_p^2).
     \]
     The very ampleness of $L$ guarantees that $\mathrm{ev}_p$ is surjective for all $p$; thus $\mathrm{ker}(\mathrm{ev}_p)$ has constant dimension. It follows that $\Sigma$ is a projective bundle over $X$. In particular it is irreducible, and as a byproduct we deduce from the surjection $\Sigma \to \Delta$ that the discriminant locus $\Delta$ is also irreducible.

     \smallskip

     In what follows, we write $\Sigma_k$ (resp.~$\Sigma_{\geq k}$) to be the pull-back of $\Sigma$ to the corresponding locus $\Delta_k$ (resp.~$\Delta_{\geq k}$).

\begin{defn}\customlabel{defn: ampleness}{$(\dagger)$}
    Let $L$ be a line bundle on a smooth projective variety $X$. We say $L$ satisfies the ``ampleness condition'' \ref{defn: ampleness} if the following holds:
\begin{enumerate}
    \item $L$ is very ample.
    \item $\Delta_1 \subset \Delta$ is dense, and $\operatorname{codim}(\Delta_\infty, \Delta)\geq \dim X + 1$.
    \item $\overline{\Delta_{2}}$ is an irreducible divisor in $\Delta$, and is the only full-dimensional component of $\Delta_{\geq 2}$.

    \item $\overline{\Sigma_2}$ is also irreducible.
\end{enumerate}
\end{defn}

\begin{rmk}
    Assumptions (i) and the first part of (ii) implies that $\Delta$ is a divisor in $|L|$. In fact, if $\operatorname{codim} \Delta >1$, then $\mathrm{Br}(|L|_\mathrm{sm}) = 0$ by purity of the Brauer group \cite{Brauerpurity}. Also, assumption (iv) implies the irreducibility part of (iii), but we spell the latter out for clarity.
    \end{rmk}

    \begin{rmk} \label{rmk: strategy to check dagger}
    When $X$ is a surface (e.g.~$\mathbb P^2$ or K3 as in Theorems~\ref{thm: brauer P2} and \ref{thm: brauer k3}), our approach to verify (ii) and (iii) is to first show that the $k${-nodal} locus (sometimes called the \textit{Severi variety}) is irreducible and dense in the $\{\delta = k\}$ locus (sometimes called the \textit{equigeneric locus}), using known results from literature. For (iv), in the $\mathbb P^2$ case we use the fact that the line bundle $L$ in question is $3$-jet ample, while in the K3 case we utilize a tacnode monodromy argument. See Sections~\ref{sec: P2} and \ref{sec: K3} for details. 
\end{rmk}

\begin{thm}\label{thm:brauer_base}
    Let $X$ be a simply connected smooth projective variety and $L\in \mathrm{Pic}(X)$ be a line bundle satisfying \ref{defn: ampleness}. Denote by $|L|_\mathrm{sm}\subset |L|$ the locus parametrizing smooth divisors. Then 
    \[\mathrm{Br}(|L|_\mathrm{sm}) = \begin{cases}0 & \text{  if \:} [\overline{\Sigma_{2}}] \neq 0 \in H^2(\Sigma, \mathbb Z / 2 \mathbb Z),\\ \mathbb Z / 2\mathbb Z & \text{  otherwise}.\end{cases}\]

\end{thm}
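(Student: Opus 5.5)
Write $B = |L|_\mathrm{sm} = |L| \setminus \Delta$, $N = \dim |L|$, and let $\Delta^\circ = \Delta_1$ be the open stratum. The plan is to compute $\mathrm{Br}(B)$ through its torsion: $B$ is smooth, so $\mathrm{Br}(B)$ is torsion. Since $\Pic(B) = \mathbb Z/\deg(\Delta)$ is finite (and $H^3$ considerations are handled below), the Kummer sequence reduces us to showing that $H^2(B,\mathbb Z/n)$ has the predicted ``extra'' part beyond $\Pic(B)/n$, equivalently to computing $H^3(B,\mathbb Z)_{\mathrm{tors}}$. Concretely, the task is to show $H^3(B,\mathbb Z)_{\mathrm{tors}}$ is either $0$ or $\mathbb Z/2$, and to decide which. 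Indeed, $\mathrm{Br}(B) \simeq H^3(B,\mathbb Z)_{\mathrm{tors}}$ holds as long as $H^2(B,\mathcal O)=0$, which is true since $B$ is affine.

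\textbf{Step 1: Gysin and duality.} Use the Gysin/localization sequence for $B \subset |L|$ with complement $\Delta$:
\[
H^3(|L|) \to H^3(B) \to H^4_\Delta(|L|) \to H^4(|L|) \to H^4(B).
\]
By Poincaré--Lefschetz duality, $H^k_\Delta(|L|) \simeq H^{\BM}_{2N-k}(\Delta)$. Since $H^3(\mathbb P^N)=0$, we get
\[
H^3(B,\mathbb Z) \simeq \ker\big(H^{\BM}_{2N-4}(\Delta) \to H_{2N-4}(\mathbb P^N)\big).
\]

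\textbf{Step 2: Compute $H^{\BM}_{2N-4}(\Delta)$ via the stratification.} This is the top-but-one Borel--Moore group of the irreducible divisor $\Delta$. Excise the deep strata: condition (ii) on $\Delta_\infty$, together with codimension $\geq 2$ for $\Delta_{\geq 3}$ and for the non-main components of $\Delta_{\geq 2}$ (condition (iii)), lets us replace $\Delta$ by $U = \Delta_1 \cup \Delta_2^\circ$ without changing $H^{\BM}_{2N-4}$ modulo classes of $\Delta_{\geq2}$-components, using the long exact BM sequence. Then pass to $\Sigma$. The map $\Sigma \to \Delta$ is an isomorphism over $\Delta_1$ and $2\!:\!1$ over $\Delta_2$, so it is a normalization-type map. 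Since $\Sigma$ is a projective bundle over the simply connected $X$, $H^1(\Sigma)=0$ and $H^2(\Sigma,\mathbb Z)$ is torsion free. Its BM homology in degrees $2N-2$ and $2N-4$ is then known.

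\textbf{Step 3: The key comparison and the obstacle.} The torsion in $H^{\BM}_{2N-4}(\Delta)$ should come from gluing the two sheets of $\Sigma$ along $\overline{\Sigma_2} \to \overline{\Delta_2}$. Condition (iv), irreducibility of $\overline{\Sigma_2}$, means the monodromy swaps the two preimages, and the double cover $\overline{\Sigma_2}\to\overline{\Delta_2}$ is connected. Comparing the BM sequences of $\Sigma \setminus \overline{\Sigma_2}\simeq \Delta\setminus\overline{\Delta_2}$ then gives
\[
H^{\BM}_{2N-4}(\Delta) \simeq \mathrm{coker}\big(\mathbb Z[\overline{\Sigma_2}]\to \ldots\big)
\]
in the form
\[
0\to \mathbb Z[\overline{\Delta_2}] \to H^{\BM}_{2N-4}(\Delta) \to H^{\BM}_{2N-4}(\Sigma)/\langle[\overline{\Sigma_2}]\rangle\to0 ,
\]
where $[\overline{\Sigma_2}] = 2\cdot(\text{pullback of }[\overline{\Delta_2}])$ relations appear. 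Working mod 2 / via $\mathbb Z/2$-coefficients, the only possible torsion is a $\mathbb Z/2$. It is nonzero exactly when $[\overline{\Sigma_2}]$ is divisible by $2$ in $H^{\BM}_{2N-4}(\Sigma)\simeq H^2(\Sigma,\mathbb Z)$, i.e. when its image in $H^2(\Sigma,\mathbb Z/2)$ vanishes. Finally, check that this torsion class maps to $0$ in $H_{2N-4}(\mathbb P^N)=\mathbb Z$, which is automatic since the target is torsion free, and that no further torsion arises in $H^3$. The main obstacle is Step 3: making the BM gluing sequence precise, i.e. handling the codimension estimates that let the deep strata be ignored in degree $2N-4$, and identifying the correct extension class. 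For this I would first try a Mayer--Vietoris/normalization exact sequence with $\mathbb Z/2$ coefficients, then lift to $\mathbb Z$.
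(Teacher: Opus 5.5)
Your architecture matches the paper's: the Gysin sequence for $B\subset|L|$, duality to $H^{\BM}_*(\Delta)$, comparison with $\Sigma$ via the homeomorphism over $\Delta_1$, and the relation $[\overline{\Sigma_2}]\mapsto 2[\overline{\Delta_2}]$. You run it with integral coefficients in degree $2N-4$ rather than $\mathbb Z/n$-coefficients in degree $2N-3$, and there are two genuine gaps.

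\textbf{First gap: the reduction to $H^3(B,\mathbb Z)_{\mathrm{tors}}$.} This step is justified by a false principle. Affineness gives $H^2(B,\mathcal O_B)=0$, but for open $B$ this does not force $\Pic(B)$ to span $H^2(B,\mathbb Z)$ rationally. So $\Br(B)$ can have a divisible part that is invisible in $H^3(B,\mathbb Z)_{\mathrm{tors}}$. For example, $\mathbb G_m^2$ is affine with $\Br=\mathbb Q/\mathbb Z$ and $H^3=0$. The paper's own degree-$2$ K3 example also has affine $B$, yet $\Br(B)[p]\neq 0$ for every prime $p$.

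What you actually need is $H^2(B,\mathbb Z)=\Pic(B)=\mathbb Z/\deg\Delta$, i.e.\ $H^{\BM}_{2N-3}(\Delta,\mathbb Z)=0$. That is exactly the content of Lemma~\ref{lemma: br isom to ker}. In the sequence for $\Sigma\to\Delta$ in degree $2N-3$:
\begin{itemize}
\item $H_{2N-3}(\Sigma)\simeq H^1(\Sigma)=H^1(X)=0$;
\item $H^{\BM}_{2N-3}(\Delta_{\geq 2})=0$ for dimension reasons;
\item $\mathbb Z[\overline{\Sigma_2}]\to\mathbb Z[\overline{\Delta_2}]$, $1\mapsto 2$, is injective.
\end{itemize}
This is where simple connectivity is really used; in your plan it only enters through torsion-freeness of $H^2(\Sigma,\mathbb Z)$. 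The paper sidesteps the issue: Kummer, Gysin and the surjectivity of $\Pic(\mathbb P^N)\to\Pic(B)$ give $\Br(B)[n]\simeq H^{\BM}_{2N-3}(\Delta,\mathbb Z/n)$ directly (Lemma~\ref{lem:brauer_bm}).

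\textbf{Second gap: the Step 3 sequence.} Your short exact sequence is not what the long exact sequence gives. In degree $2N-4$ it reads
\[
H^{\BM}_{2N-4}(\Sigma_{\geq 2})\xrightarrow{\ \iota\ } H^2(\Sigma,\mathbb Z)\oplus\mathbb Z[\overline{\Delta_2}]\to H^{\BM}_{2N-4}(\Delta)\to K\to 0,
\]
where $K=\ker\big(H^{\BM}_{2N-5}(\Sigma_{\geq2})\to H^3(\Sigma,\mathbb Z)\oplus H^{\BM}_{2N-5}(\Delta_{\geq 2})\big)$.

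\begin{itemize}
\item You drop the term $K$, and nothing in \ref{defn: ampleness} forces it to vanish.
\item The extension you wrote, with sub $\mathbb Z$ and quotient $H^2(\Sigma)/\langle s\rangle$ where $s=[\overline{\Sigma_2}]$, does not determine the torsion. The quotient alone already has torsion $\mathbb Z/\mathrm{div}(s)$.
\item You need the explicit presentation $\mathrm{coker}(\iota)=(H^2(\Sigma,\mathbb Z)\oplus\mathbb Z)/\langle (s,2)\rangle$. Since $H^2(\Sigma,\mathbb Z)$ is free, its torsion is $\mathbb Z/\gcd(2,\mathrm{div}(s))$.
\item To exclude ``further torsion'' you must show $K$ is torsion free. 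This holds because $H^{\BM}_{\mathrm{top}-1}$ of any variety is torsion free: by universal coefficients, $H^{\BM}_{\mathrm{top}}(-;\mathbb Z/n)$ is free over $\mathbb Z/n$ on the top-dimensional components. Hence the extension by $K$ splits.
\end{itemize}

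With both repairs your integral route works and is equivalent to the paper's via universal coefficients. Working with $\mathbb Z/n$-coefficients in degree $2N-3$, as the paper does, removes both issues at once, since only $H^1(\Sigma)$ and top-degree groups of $\Sigma_{\geq 2}$ and $\Delta_{\geq 2}$ enter.
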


 The proof of Theorem~\ref{thm:brauer_base} proceeds through a sequence of lemmas. For the rest of this section, we assume throughout the assumptions of Theorem~\ref{thm:brauer_base}.

\begin{lem}\label{lem:brauer_bm}
 Let $Z$ be a smooth variety and $D \subset Z$ a divisor with complement $U \coloneq Z \setminus D$. Then there are isomorphisms
 $$(\Br(U)[n])/(\Br(Z)[n]) \simeq \ker(H^{\mathrm{BM}}_{2\dim D-1}(D, \mathbb Z / n \mathbb Z) \to H^3(Z, \mathbb Z / n \mathbb Z)).$$
 for all $n \in \mathbb Z$, where the last map is the Gysin morphism.
 \end{lem}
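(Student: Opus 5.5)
We work throughout with étale cohomology of complex varieties, identified with singular cohomology with finite coefficients. The plan is to compare the Kummer sequences on $Z$ and $U$ and then apply the long exact sequence of the pair $(Z,U)$, identifying its relative term with Borel--Moore homology of $D$.

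\textbf{Step 1: Kummer sequences.} For any smooth variety $Y$ the Kummer sequence gives
\[
0 \to \Pic(Y)/n \to H^2(Y,\mu_n) \to \Br(Y)[n] \to 0 .
\]
Since $Z$ is smooth, every divisor on $U$ extends to a divisor on $Z$, so the restriction $\Pic(Z)\to\Pic(U)$ is surjective. Its kernel is generated by the classes of the irreducible components $D_i$ of $D$. Comparing the two Kummer sequences along restriction, the snake lemma then gives
\[
\Br(U)[n]/\operatorname{im}\big(\Br(Z)[n]\big) \simeq H^2(U,\mu_n)\big/\big(\operatorname{im} H^2(Z,\mu_n) + \operatorname{im}\Pic(U)/n\big).
\]
Because $\Pic(Z)\to\Pic(U)$ is surjective, the image of $\Pic(U)/n$ already lies in the image of $H^2(Z,\mu_n)$. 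Hence the right-hand side is simply $\operatorname{coker}\big(H^2(Z,\mu_n)\to H^2(U,\mu_n)\big)$.

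Purity of the Brauer group for the smooth $Z$ (Gabber, \cite{Brauerpurity}) shows that $\Br(Z)\to\Br(U)$ is injective. The left-hand side is therefore the quotient $\Br(U)[n]/\Br(Z)[n]$ appearing in the statement.

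\textbf{Step 2: long exact sequence of the pair.} Consider the sequence
\[
H^2(Z)\to H^2(U)\xrightarrow{\partial} H^3_D(Z)\to H^3(Z),
\]
with coefficients $\mathbb Z/n$. It identifies the cokernel from Step 1 with $\ker\big(H^3_D(Z,\mathbb Z/n)\to H^3(Z,\mathbb Z/n)\big)$. Since $\mathbb C$ contains all roots of unity, we may identify $\mu_n\simeq\mathbb Z/n$.

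\textbf{Step 3: duality.} Because $Z$ is smooth of real dimension $2m$, where $m=\dim Z$, Poincaré–Alexander–Lefschetz duality gives
\[
H^k_D(Z,\mathbb Z/n)\simeq H^{\BM}_{2m-k}(D,\mathbb Z/n).
\]
For $k=3$ we obtain $H^{\BM}_{2m-3}(D)=H^{\BM}_{2\dim D-1}(D)$, using $\dim D=m-1$. Under this identification the forgetful map $H^3_D(Z)\to H^3(Z)$ becomes the Gysin (pushforward) morphism. This proves the isomorphism in the lemma.

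\textbf{Main obstacle.} The only delicate points are bookkeeping. One must check that the Kummer comparison really kills the $\Pic(U)/n$ contribution, which relies on $\Pic(Z)\to\Pic(U)$ being surjective because $Z$ is smooth. One must also check that the duality isomorphism is compatible with the map to $H^3(Z)$. No smoothness of $D$ is needed, since Borel–Moore homology handles the singular divisor.
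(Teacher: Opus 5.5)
Your argument is correct and is the paper's argument: compare the Kummer sequences for $Z$ and $U$, use surjectivity of $\Pic(Z)\to\Pic(U)$ and the snake lemma to identify the Brauer quotient with $\operatorname{coker}\bigl(H^2(Z,\mathbb Z/n\mathbb Z)\to H^2(U,\mathbb Z/n\mathbb Z)\bigr)$, then read this cokernel off the Gysin (local cohomology) sequence of the pair $(Z,U)$ via the duality $H^3_D(Z)\simeq H^{\mathrm{BM}}_{2\dim D-1}(D)$. Your extra remark that $\Br(Z)\to\Br(U)$ is injective, which the paper leaves implicit, only needs the classical injectivity $\Br(Z)\hookrightarrow\Br(K(Z))$ for regular integral schemes, applied on each component, rather than the full purity theorem.
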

\begin{proof}
The Kummer sequences for $Z$ and $U$ and the Gysin sequence for the pair $(Z, U)$ fit into the commutative diagram
    $$
    \begin{tikzcd}
0 \arrow[r] & \Pic(Z)\otimes \mathbb Z / n \mathbb Z \arrow[r] \arrow[d] & {H^2(Z, \mathbb Z / n \mathbb Z)} \arrow[d]   \arrow[r]                     & {\Br(Z)[n]} \arrow[r] \arrow[d] & 0 \\
0 \arrow[r] & \Pic(U)\otimes \mathbb Z / n \mathbb Z \arrow[r] & {H^2(U, \mathbb Z / n \mathbb Z)} \arrow[r] \arrow[d]              & {\Br(U)[n]} \arrow[r]           & 0 \\
            &                                               & {H_{2\dim D-1}^{\mathrm{BM}}(D, \mathbb Z /n \mathbb Z)} \arrow[d] &                                 &   \\
            &                                                            & {H^3(Z, \mathbb Z /n \mathbb Z)}                                   &                                 &  
\end{tikzcd}
    $$
    with exact rows and columns. Since the restriction $\Pic(Z) \to \Pic(U)$ is surjective, the snake lemma yields the desired isomorphism.
\end{proof}

\begin{rmk}
    For a compact topological space, Borel--Moore homology agrees with standard homology. Thus we drop the superscript ``BM'' in such cases.
\end{rmk}

Applying Lemma~\ref{lem:brauer_bm} with $Z = |L|$ and $D = \Delta$, and recall that the Brauer group of a projective space is trivial, we obtain
\begin{equation}\label{eq: Br isom to BM}
\mathrm{Br}(|L|_\mathrm{sm})[n] \simeq H_{2\dim \Delta -1}(\Delta, \mathbb Z/ n\mathbb Z).
\end{equation}

We compute the RHS of \eqref{eq: Br isom to BM} next. By the denseness of $\Delta_1$, we have $\Delta_{\geq 1} = \Delta$ and $\Delta \setminus \Delta_1 = \Delta_{\geq 2}$. Consider the fiber diagrams
\[\begin{tikzcd}
	{\Sigma_{\geq 2}} & \Sigma & {\Sigma_1} \\
	{\Delta_{\geq 2}} & \Delta & {\Delta_1.}
	\arrow[hook, from=1-1, to=1-2]
	\arrow[from=1-1, to=2-1]
	\arrow[from=1-2, to=2-2]
	\arrow[hook', from=1-3, to=1-2]
	\arrow["f", from=1-3, to=2-3]
	\arrow[hook, from=2-1, to=2-2]
	\arrow[hook', from=2-3, to=2-2]
\end{tikzcd}\]
Note that $f$ is a proper bijective map, thus a homeomorphism. Apply the Gysin sequence to the top and bottom rows, one obtains a long exact sequence of Borel--Moore homology:
\[
\cdots \to H_i(\Sigma) \oplus H_i(\Delta_{\geq 2}) \to H_i(\Delta) \to H_{i-1}(\Sigma_{\geq 2}) \to H_{i-1} (\Sigma) \oplus H_{i-1}(\Delta_{\geq 2}) \to \cdots,
\]
with $\mathbb Z/n \mathbb Z$-coefficient. We are interested in the case $i=2\dim \Delta -1$.

\begin{lem}\label{lemma: br isom to ker}
    Under the above notations, we have 
    \[
    H_{2\dim\Delta -1}(\Sigma, \mathbb Z/ n \mathbb Z) \oplus H_{2\dim \Delta -1}(\Delta_{\geq 2}, \mathbb Z/ n \mathbb Z)= 0
    \]
    for all $n \in \mathbb Z$. Combined with \eqref{eq: Br isom to BM}, we deduce 
    \begin{equation}\label{eq: Br isom to ker}
    \mathrm{Br}(|L|_\mathrm{sm})[n]\simeq \ker\left(\iota: H_{2\dim \Delta -2}(\Sigma_{\geq 2}) \to H_{2\dim \Delta-2} (\Sigma) \oplus H_{2\dim \Delta-2}(\Delta_{\geq 2}) \right),
    \end{equation}
    where the RHS is taken with $\mathbb Z/ n\mathbb Z$-coefficient.
\end{lem}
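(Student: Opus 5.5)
We prove the two vanishings separately, then feed them into the long exact sequence. Set $N \coloneq \dim \Delta = \dim |L| - 1$. Since $\Sigma \to \Delta$ is surjective and generically finite (indeed birational over $\Delta_1$), we have $\dim \Sigma = N$.

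\textbf{Vanishing for $\Sigma$.} The space $\Sigma$ is a projective bundle over $X$, with fibers $\mathbb P^{r}$. By the Leray--Hirsch theorem, $H_*(\Sigma,\mathbb Z/n\mathbb Z)\simeq \bigoplus_{j=0}^{r} H_{*-2j}(X,\mathbb Z/n\mathbb Z)$. In degree $2N-1 = 2(\dim X + r) - 1$, the only possible contribution is $H_{2\dim X - 1}(X,\mathbb Z/n\mathbb Z)$, coming from $j = r$. By Poincaré duality this group is $H^1(X,\mathbb Z/n\mathbb Z)$. It vanishes because $X$ is simply connected: $H^1(X,\mathbb Z/n) = \mathrm{Hom}(\pi_1(X),\mathbb Z/n) = 0$. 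Hence $H_{2N-1}(\Sigma,\mathbb Z/n\mathbb Z) = 0$.

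\textbf{Vanishing for $\Delta_{\geq 2}$.} Here we use condition (iii) of \ref{defn: ampleness}: $\overline{\Delta_2}$ is the unique component of $\Delta_{\geq 2}$ of dimension $N-1$, and every other component $W$ has dimension at most $N-2$. The general principle is that top Borel--Moore homology of a variety of dimension $m$ lives in degree $2m$, and $H^{\BM}_i = 0$ for $i > 2m$. We apply the Gysin/excision sequence to the decomposition $\Delta_{\geq 2} = \overline{\Delta_2} \cup W'$, where $W'$ is the union of the lower-dimensional components. This reduces the problem to two facts:
\begin{itemize}
\item $H_{2N-1}(\overline{\Delta_2}) = 0$, which holds because $2N-1 > 2(N-1)$;
\item $H^{\BM}_{2N-1}$ and $H^{\BM}_{2N-2}$ of pieces of dimension at most $N-2$ vanish, since both degrees exceed $2(N-2)$.
\end{itemize}
Concretely, consider the sequence
\[
H_{2N-1}(W') \to H_{2N-1}(\Delta_{\geq2}) \to H^{\BM}_{2N-1}(\Delta_{\geq 2}\setminus W').
\]
The open piece $\Delta_{\geq 2}\setminus W'$ is an open subset of $\overline{\Delta_2}$, of dimension $N-1$, so its $H^{\BM}_{2N-1}$ vanishes. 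The left term $H_{2N-1}(W')$ vanishes as well. Hence $H_{2N-1}(\Delta_{\geq 2},\mathbb Z/n\mathbb Z) = 0$.

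\textbf{Conclusion.} The relevant segment of the long exact sequence is
\[
0 \to H_{2N-1}(\Delta) \to H_{2N-2}(\Sigma_{\geq 2}) \xrightarrow{\iota} H_{2N-2}(\Sigma)\oplus H_{2N-2}(\Delta_{\geq 2}).
\]
The leftmost $0$ is the vanishing just established. Exactness therefore identifies $H_{2N-1}(\Delta)$ with $\ker\iota$. Combining this with \eqref{eq: Br isom to BM} gives \eqref{eq: Br isom to ker}.

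The main point requiring care is the vanishing of Borel--Moore homology above twice the complex dimension for possibly singular, noncompact pieces. This is standard, via a triangulation or stratification by smooth strata together with the Gysin sequence. The second condition in (ii), concerning $\Delta_\infty$, is not needed here; only the dimension bounds from (iii) and the simple connectivity of $X$ enter.
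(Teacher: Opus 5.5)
Your proof is correct and follows essentially the same route as the paper's. The vanishing for $\Sigma$ comes from duality and the simple connectivity of $X$: you go through Leray--Hirsch and duality on $X$, while the paper uses duality on the smooth compact $\Sigma$ together with $H^1(\Sigma)\simeq H^1(X)$. The vanishing for $\Delta_{\geq 2}$ comes from the bound $\dim\Delta_{\geq 2}=\dim\Delta-1$ given by (iii), which already kills $H_{2\dim\Delta-1}$ on its own, so your component-by-component excision is harmless but unnecessary.
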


\begin{proof}
    Assumption (iii) in \ref{defn: ampleness} implies
    \[
    \dim \Delta_{\geq 2} = \dim \Delta -1,
    \]
    so the latter vanishing holds by dimension reasons. On the other hand, recall that $\Sigma$ is smooth and a projective bundle over $X$,
    so we have
    \[
    H_{2\dim\Delta -1}(\Sigma, \mathbb Z/ n \mathbb Z) \simeq H^1(\Sigma, \mathbb Z/ n \mathbb Z) \simeq H^1(X, \mathbb Z/ n \mathbb Z) = 0,
    \]
    where the last vanishing uses our assumption that $X$ is simply connected. The second claim is immediate from the long exact sequence.
\end{proof}

\begin{proof}[Proof of Theorem~\ref{thm:brauer_base}]
    We compute the RHS of \eqref{eq: Br isom to ker}. Note that
    \[
    \dim \Delta_{\geq 2} = \dim \Sigma_{\geq 2} = \dim \Delta -1.
    \]
    In light of assumption (iii) in \ref{defn: ampleness}, we have
    \[
    H_{2\dim \Delta -2}(\Delta_{\geq 2}, \mathbb Z/n \mathbb Z) \simeq \mathbb Z/ n\mathbb Z \cdot [\overline{\Delta_{2}}].
    \]
    By assumption (iv), the subset $\overline{\Sigma_{2}} \subset \Sigma_{\geq 2}$ is irreducible. Combined with assumptions (ii) and (iii), we have
    \[
    H_{2\dim \Delta -2}(\Sigma_{\geq 2}, \mathbb Z/n \mathbb Z) \simeq \mathbb Z/ n\mathbb Z \cdot [\overline{\Sigma_{2}}].
    \]
    Recall that a general element in $\Delta_2$ has exactly two singular points, so the map $\iota$ sends $[\overline{\Sigma_{2}}]$ to $2\cdot [\overline{\Delta_{2}}]$. It follows immediately that $\ker(\iota) = 0$ if $n$ is odd. When $n=2m$ is even, we have instead (note that $\Sigma$ is smooth)
    \[\ker(\iota) = \begin{cases}0 & \text{  if \:} m\cdot [\overline{\Sigma_{2}}] \neq 0 \in H^2(\Sigma, \mathbb Z / 2m \mathbb Z),\\ \mathbb Z / 2\mathbb Z & \text{  otherwise}.\end{cases}\]
    Furthermore, we have 
    \[
    [\overline{\Sigma_{2}}] \neq 0 \in H^2(\Sigma, \mathbb Z / 2 \mathbb Z) \Longleftrightarrow m\cdot [\overline{\Sigma_{2}}] \neq 0 \in H^2(\Sigma, \mathbb Z / 2m \mathbb Z),
    \]
    the claim thus follows.
    \end{proof}

\section{Plane curves}\label{sec: P2}

The purpose of this section is to apply Theorem~\ref{thm:brauer_base} to the projective plane and prove Theorem~\ref{thm: brauer P2}. For this, we need to effectively determine if $[\overline{\Sigma_{2}}] \neq 0 \in H^2(\Sigma, \mathbb Z/ 2\mathbb Z)$ holds. Note that
\[
H^2(\Sigma, \mathbb Z/ 2\mathbb Z) \simeq \mathbb Z/ 2\mathbb Z \cdot H \oplus H^2(X, \mathbb Z/2\mathbb Z)
\]
via the projective bundle $\Sigma \to X$, where $H$ denotes the relative ample class ---  this is also the hyperplane class of $|L|$ restricted to $\Sigma$.

\smallskip

In practice, when $X=S$ is a {surface}, we shall test the vanishing of $[\overline{\Sigma_{2}}]$ by computing its coefficient in $H$, which we denote by $\deg_{\Sigma/S} \Sigma_{\geq 2}$. In geometric terms, this roughly translates into the following enumerative question:

\begin{question}
    Given two general curves in $\Delta$ with a common node and consider the pencil spanned by them. How many $2$-nodal curves appear in this pencil?
\end{question}

The answer is given by the following

\begin{lem}\label{lemma: relative degree}
    Let $L \in \Pic(S)$ be a line bundle satisfying \ref{defn: ampleness}, with $\dim |L| \geq 5$. Then we have
    $$\deg_{\Sigma/S} \Sigma_{\geq 2} = e(S) - 7 + 3 (L^2) + 2 (K_S \cdot L) \equiv e(S) + (L^2) + 1 \mod 2$$
\end{lem}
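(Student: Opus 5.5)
The plan is to compute $\deg_{\Sigma/S}\Sigma_{\geq 2}$ on a single fiber of $\Sigma\to S$. Fix a general point $p\in S$ and let $P_p=\pi^{-1}(p)=\mathbb P(\ker \mathrm{ev}_p)$. The coefficient of $H$ in $[\overline{\Sigma_2}]$ is the degree of the divisor $\Sigma_{\geq 2}\cap P_p$ in $P_p$. I will measure it by counting its intersection points with a general line in $P_p$, that is, with a general pencil of curves in $|L|$ singular at $p$. By (ii)--(iii) of \ref{defn: ampleness}, the general member of such a pencil has an ordinary node at $p$ and no other singularity. The members of $\Sigma_{\geq 2}$ in the pencil are the curves acquiring a second node.

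To count these I pass to the blow-up $\beta\colon S'=\Bl_p S\to S$ with exceptional curve $E$, and set $L'=\beta^*L-2E$. Curves in $|L|$ singular at $p$ correspond, via strict transforms, to members of the linear system $|L'|$ on $S'$ (the image of $\ker \mathrm{ev}_p$). A curve with an ordinary node at $p$ has smooth strict transform, meeting $E$ transversally in two points. The hypothesis $\dim|L|\geq 5$ should be what ensures that the pencil is a Lefschetz pencil on $S'$: its base locus consists of $L'^2=L^2-4$ reduced points, and its singular members have a single node. The singular members are exactly the strict transforms with an extra node away from $E$, i.e.\ the $2$-nodal curves. Curves with a cusp at $p$ have strict transform tangent to $E$ but still smooth, so they do not contribute.

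I would then count singular fibers by the standard Euler characteristic argument. Blow up the $L'^2$ base points to get $\widetilde S\to\mathbb P^1$ with general fiber $C'$ of genus $g'$, where $2g'-2=L'^2+K_{S'}\cdot L'$. Then
\[
e(S')+L'^2 = e(\widetilde S) = 2\,e(C') + \#\{\text{nodal fibers}\},
\]
so $\#=e(S')+3L'^2+2K_{S'}\cdot L'$. Now substitute $e(S')=e(S)+1$, $L'^2=L^2-4$ and $K_{S'}=\beta^*K_S+E$. The latter gives $K_{S'}\cdot L'=K_S\cdot L+2$. The total is $e(S)-7+3L^2+2K_S\cdot L$, and reducing mod $2$ gives $e(S)+L^2+1$.

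The main obstacle is transversality: justifying that a general line in $P_p$ meets $\Sigma_{\geq 2}\cap P_p$ transversally, and only in points of $\Sigma_2$ with two ordinary nodes. Concretely, this means showing the pencil is Lefschetz on $S'$. I would try to deduce this by a dimension count, using that the loci of curves with a node at $p$ and a further cusp, tacnode, or triple point have codimension $\geq 2$ in $P_p$. I would also use the irreducibility of $\overline{\Sigma_2}$, so that the multiplicity is generically $1$ and the intersection number equals the naive count. A secondary check is that every singular member of the pencil is singular away from $E$, i.e.\ that no member has a singularity worse than a cusp at $p$. For general $p$ and a general pencil this again follows from codimension counts, which is where $\dim|L|\geq5$ enters.
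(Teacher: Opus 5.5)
Your proposal is correct and takes essentially the same route as the paper. Both restrict to a general pencil of curves singular at a fixed point $p$; $\dim|L|\geq 5$ is used to make the fiber $\pi^{-1}(p)$ at least two-dimensional. Both then blow up $p$ and the $(L^2)-4$ residual base points, and count singular fibers of the resulting fibration over $\mathbb P^1$ by Euler characteristics.

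Your bookkeeping with $L'=\beta^*L-2E$ and adjunction on $\Bl_p S$ is a repackaging of the paper's remark that the general member has genus $g(L)-1$ once the node is resolved. Your explicit transversality and "no tacnode at $p$" checks make precise the genericity conditions (i)--(iv) that the paper imposes on the pencil.
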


\begin{proof}
Pick $p \in S$. By \ref{defn: ampleness} and the assumption that the fiber dimension of $\Sigma \to S$ is at least two, we can pick two members $C_1, C_2$ of the linear system $|L|$, such that the following conditions are satisfied: 
\begin{enumerate}
\item The curves $C_1$ and $C_2$ are non-singular away from $p$ and we have $\delta_p(C_i) = 1$;
\item The intersections of $C_1$ and $C_2$ away from $p$ are transverse;
\item The tangent cones of $C_1$ and $C_2$ at $p$ do not intersect away from $p$;
\item The pencil spanned by $C_1$ and $C_2$ meets $\Delta_{\geq 2}$ along the locus $\Delta_{2}$.
\end{enumerate}
Then, we can blow up $S$ in $p$ and the $(L^2)-4$ intersection points of $C_1$ and $C_2$ away from $p$ to obtain the pencil $\widetilde{S} \to \mathbb P^1$, fibers of which correspond to strict transforms of members of the pencil spanned by $C_1$ and $C_2$. Since singularities with $\delta = 1$ are resolved by a blow-up, the singular fibers of $\widetilde{S} \to \mathbb P^1$ are in one-to-one correspondence with members of the pencil that acquire a singularity away from $p$.
Thus, we have 
$$e(\widetilde{S}) = 2e_{\mathrm{gen}} + \deg_{\Sigma/S} \Sigma_{\geq 2},$$
where $e_{\mathrm{gen}} = 2 - 2(g(L) - 1)$ is the Euler characteristic of a non-singular fiber of $\widetilde{S} \to \mathbb P^1$. By the adjunction formula, we have
\[
2 g(L) - 2 = L\cdot (K_S + L).
\]
On the other hand, the Euler characteristic of the blow-up can be computed as $$e(\widetilde{S}) = e(S) + (L^2) - 4 + 1,$$ and the claim follows.
\end{proof}

\begin{proof}[Proof of Theorem~\ref{thm: brauer P2}] We first verify the ampleness condition \ref{defn: ampleness}. Assumption (i) is automatic. For (ii) and (iii), we first note that the codimension of the reducible (resp.~non-reduced) locus in $|\mathcal{O}(d)|$ is well-known to be $d-1$ (resp.~$2d-1$). Thus for $d\geq 4$, it suffices to argue with reduced and irreducible curves (the $d=3$ case is clear). As explained in Remark~\ref{rmk: strategy to check dagger}, we show that the locus in $|L|$ parametrizing integral curves with exactly $k$ ordinary nodes is \textit{irreducible and dense} in the locus $\{\delta = k\}$,
and has dimension $\dim |L| - k$. Indeed, for $S= \mathbb P^2$ these claims are classical; see for example \cite{AC, Z1, Harris}. Thus all inclusions in the chain \eqref{eq: chain of inclusions} except the last one must be equalities, which establishes (ii) and (iii). Lastly, it is known that $\mathcal{O}(d)$ is $3$-jet ample for $d\geq 3$, which implies that the evaluation map
    \[
    \Sigma_\mathrm{bin} \to (\mathbb P^2 \times \mathbb P^2) \setminus (\textrm{diagonal of } \mathbb {P}^2)
    \]
    from the universal $2$-nodal locus, recording the two distinct nodes, is a projective bundle. This implies $\Sigma_\mathrm{bin}$ and thus also $\overline{\Sigma_2}$ are irreducible, giving (iv).


    Now we apply Theorem~\ref{thm:brauer_base}. For $L = \mathcal{O}_{\mathbb P^2}(d)$, we have
    \[
    \deg_{\Sigma/\mathbb P^2} \Sigma_{\geq 2} \equiv 3 + d^2 + 1 \equiv d \mod 2.
    \]
    The claim for odd degrees $d$ follows immediately from Theorem~\ref{thm:brauer_base}. For even $d$, we still need to determine the component of $[\overline{\Sigma_{2}}]$ in $H^2(\mathbb P^2, \mathbb Z/2 \mathbb Z) \simeq \mathbb Z/ 2\mathbb Z \cdot h$. This is obtained by an intersection-theoretic computation. Recall that the class $\Sigma \in H^6(|L|\times \mathbb P^2)$ of the universal singular locus  is given by $(H+(d-1)h)^3$. Writing $b$ to be the coefficient of $h$ in the class $[\overline{\Sigma_{2}}]$ (before modding out $2$), we have
    \begin{align*}
     3(d-1)^2\deg_{\Sigma/ \mathbb P^2} \Sigma_{\geq 2} +3(d-1)b&= \int_{\mathbb |L| \times \mathbb P^2} (\deg_{\Sigma/\mathbb P^2} \Sigma_{\geq2 }\cdot H+bh)(H+(d-1)h)^3 \cdot H^{\dim|L|-2}  \\
    &= \int_{|L| \times \mathbb P^2} [\overline{\Sigma_{2}}] \cdot H^{\dim|L|-2}\\
    &= \int_{|L|} \mathrm{pr}_{1, *}([\overline{\Sigma_{2}}]) \cdot H^{\dim|L|-2} \\
    &= 2 \int_{|L|} [\overline{\Delta_{2}}] \cdot H^{\dim|L|-2} \\
    &= 3(d-1)(d-2)(3d^2 - 3d - 11),
\end{align*}
where the last equality uses the well-known \textit{Severi degree} of $\overline{\Delta_2}$, cf.~\cite{Roberts, Vainsencher}. When $d$ is even, it follows that
\[
b \equiv (d-2)(3d^2 - 3d - 11) \equiv d \equiv 0 \mod 2.
\]
We thus conclude that $[\overline{\Sigma_{2}}]$ vanishes in $H^2(\Sigma, \mathbb Z/ 2\mathbb Z)$, proving the claim for even degrees.
\end{proof}

\begin{rmk}\label{rmk: finding Brauer-Severi}
    It is a natural question to find the Brauer--Severi variety corresponding to the non-trivial element of $\mathbb Z/2\mathbb Z$ in Theorem~\ref{thm: brauer P2} for even $d$. In the $d=4$ case, Di Lorenzo and Pirisi showed that this is related to the theory of theta characteristics for genus three curves; see \cite[Proposition~4.4,~Corollary~4.18] {DiLorenzo-Pirisi}. The fact that their argument involves explicit geometry of bitangents of smooth quartic curves and lines on smooth cubic surfaces may suggest that efforts are required to answer this problem in general.

    Di Lorenzo and Pirisi also showed in the same paper that $\mathrm{Br}(\mathcal{X}_d) \simeq \mathbb Z/\mathrm{gcd}(6,d)\mathbb Z$ for $d\geq 4$, where $\mathcal{X}_d$ is the moduli stack of smooth degree $d$ plane curves. Since we have an isomorphism $\mathcal{X}_d \simeq |\mathcal{O}_{\mathbb P^2}(d)|_\mathrm{sm}/\mathrm{PGL}(3)$, there is an exact sequence
    \[
    0\to \mathbb Z/\mathrm{per}(\alpha)\mathbb Z \to \mathrm{Br}(\mathcal{X}_d) \to \mathrm{Br}(|\mathcal{O}_{\mathbb P^2}(d)|_\mathrm{sm}),
    \]
    where $\alpha$ is the class induced by the $\PGL(3)$-torsor $|\mathcal{O}_{\mathbb P^2}(d)|_\mathrm{sm}$ over $\mathcal{X}_d$. One can thus deduce from Theorem~\ref{thm: brauer P2} and \cite[Theorem~1.3]{DiLorenzo-Pirisi} that
    \[
    \mathrm{per}(\alpha) = 3 \;\textrm{ if \,} 3\mid d, \quad \! \mathrm{per}(\alpha)= 1 \textrm{ otherwise}
    \]
    and that the map $\Br(\mathcal X_d) \to \Br(|\mathcal O_{\mathbb P^2}(d)|_{\mathrm{sm}})$ is surjective.
\end{rmk}

\section{Symmetric products of universal smooth plane curves} \label{sec: symmetric products}

The aim of this section is to compute the Brauer group of the relative $n$-th symmetric product $\mathcal{C}_B^{(n)}$ of universal smooth plane curves for $n\geq 1$ (and $d\geq 4$, although the same strategy will be applied to the $d=3$ case in Section~\ref{sec: appendix cubics}). This is motivated by the fact that for $k \gg 0$, the Abel--Jacobi map
$$\mathcal C_{B}^{(kd)} \to \Pic^{-kd}_{\mathcal C/ B} \simeq \mathcal J$$
is a Brauer--Severi variety, i.e., a fibration in projective spaces. Denote by $\alpha\in \mathrm{Br}(\mathcal{J})$ the corresponding Brauer class. Then we have a short exact sequence 
$$0 \to \langle \alpha\rangle \to \Br(\mathcal J) \to \Br(\mathcal C^{(kd)}_B) \to 0.$$
In Section \ref{sec: torsors over Jacobian}--\ref{sec: appendix cubics}, this will be used to control the Tate--Shafarevich group $H^1(B, \Pic^0_{\mathcal C/ B}),$ which parametrizes torsors over the Jacobian of the universal smooth plane curve.

In order to compute the Brauer group of $\mathcal{C}_B^{(n)}$, we note that for $n = d$, the natural map
$$\mathcal C^{[d]} \to (\mathbb P^2)^{[d]}$$
is a projective bundle since $\mathcal O_{\mathbb P^2}(d)$ is $d$-very ample, where $\mathcal{C}^{[d]}$ is the relative Hilbert scheme of points over the total base $|\mathcal O_{\mathbb P^2}(d)|$. This enables us to proceed by a similar strategy to compute $\mathrm{Br}(\mathcal{C}_B^{(d)})$ as in Section~\ref{sec: brauer group}. Before spelling out the details of this computation, let us discuss how to deduce from it the desired result for symmetric products with more points.

\smallskip

Fix a line $L \subset \mathbb P^2$. Since any smooth plane curve $C \subset \mathbb P^2$ of degree $d$ does not contain $L$, adding the closed subscheme $C \cap L$ of degree $d$ to an arbitrary closed subscheme of degree $n$ yields a well-defined closed immersion
    \begin{align*}
    i_L \colon \mathcal C_B^{(n)} &\hookrightarrow \mathcal C_B^{(n+d)}\\
    Z & \mapsto Z \cup (C \cap L). 
    \end{align*}
The following relative version of \cite[\S 4]{brauerquot} and \cite[Theorem~1.2]{brauersymm}, see also \cite[Section~3.1]{huybrechts2025periodindexproblemhyperkahlermanifolds} for the case of a relative smooth curve with a section, relates the Brauer groups of the various symmetric powers of $\mathcal C_B$.

\begin{prop}\label{prop: brauer isom} For $n\geq 3$, the closed immersion $i_L$ induces isomorphisms
 $$i_L^* \colon \mathrm{Br}(\mathcal C_B^{(n+d)})[m] \simeq \Br(\mathcal C^{(n )}_B)[m]$$ for all $m \in \mathbb Z$.
\end{prop}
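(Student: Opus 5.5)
We argue exactly as in the proof of Lemma~\ref{lem:brauer_bm}, comparing the Kummer sequences for $\mathcal C_B^{(n+d)}$ and $\mathcal C_B^{(n)}$. Both spaces are smooth: $\mathcal C_B \to B$ is a smooth proper family of curves, so each relative symmetric product $\mathcal C_B^{(k)}$ is smooth over $B$. The Kummer sequence gives, for each of them, a short exact sequence
\[
0 \to \Pic(-)\otimes \mathbb Z/m\mathbb Z \to H^2(-, \mathbb Z/m\mathbb Z) \to \Br(-)[m] \to 0,
\]
and $i_L^*$ maps the sequence for $\mathcal C_B^{(n+d)}$ to the one for $\mathcal C_B^{(n)}$. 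By the five lemma it therefore suffices to prove two things: that $i_L^*$ is an isomorphism on $H^2(-,\mathbb Z/m\mathbb Z)$ (injectivity on $H^3$ is not needed, since the rows are short exact), and that $i_L^*$ is an isomorphism on $\Pic\otimes\mathbb Z/m\mathbb Z$. In fact the snake lemma only requires surjectivity on $\Pic \otimes \mathbb Z/m$ together with bijectivity on $H^2$.

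\emph{Cohomology.} I would proceed fiberwise over $B$, using the Leray spectral sequences of $\mathcal C_B^{(k)} \to B$. For a single smooth curve $C$ of genus $g$, Macdonald's description of $H^*(C^{(k)},\mathbb Z)$ shows that the map $C^{(k)} \hookrightarrow C^{(k+d)}$, given by adding a fixed effective divisor, induces isomorphisms on $H^i$ for $i < k$. The reason is that $C^{(k)} \hookrightarrow C^{(k+d)}$ is, up to homotopy, the inclusion of skeleta in Macdonald's cell-like description; equivalently, it is a Lefschetz-type statement for the embedding of a symmetric product as an ample divisor (an iterate of $C^{(j)}\hookrightarrow C^{(j+1)}$). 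Hence for $n\ge 3$ the maps $R^q$ agree for $q \le 2$, and also $R^3$ injects. The local systems over $B$ are identified compatibly, because $i_L$ is a morphism over $B$. The map of Leray spectral sequences then gives an isomorphism on $H^2$ and an injection on $H^3$, with $\mathbb Z/m$ coefficients.

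\emph{Picard groups.} Surjectivity of $i_L^*$ on $\Pic$ is the delicate point. I would describe $\Pic(\mathcal C_B^{(k)})$ for $k \ge 3$ as generated by three kinds of classes: pullbacks from $B$ (which are trivial up to torsion, since $\Pic(B)$ is a quotient of $\Pic(|L|)$), the tautological divisor $x$ of divisors meeting a fixed line section, namely the restriction of $\{Z : Z\cap L' \ne \emptyset\}$, and the pullback of $\Pic$ of the relative Jacobian along the Abel--Jacobi map. Both sources are compatible with $i_L$: $x$ restricts to $x$, and the Abel--Jacobi maps commute with $i_L$ after translation by the class of $C\cap L$, which is $\mathcal O_C(1)$ and is defined over $B$. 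I would instead avoid explicit generators by using the $H^2$ isomorphism and the exponential sequence: since $H^1(\mathcal O)$ and $H^2(\mathcal O)$ are compared fiberwise, where they are $\bigwedge^\bullet H^1(C,\mathcal O)$ in low degrees, $\NS$ agrees. The remaining difference, $\Pic^0$, is controlled by $H^1$, which $i_L^*$ also identifies.

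The main obstacle I expect is this Picard comparison in the relative, noncompact setting. Rather than proving surjectivity on $\Pic$ directly, I would try first to show that the cokernel of $\Pic(\mathcal C_B^{(n+d)})\to\Pic(\mathcal C_B^{(n)})$ is divisible, or even zero, using the fiberwise Lefschetz isomorphism on $H^1$ and $H^2$ plus the description of line bundles via $\NS$ of fibers and $\Pic(B)$. I would also keep in mind that the hypothesis $n\ge 3$ is exactly what the cohomological range $H^i$, $i<n$, requires, namely degree $\le 2$ with injectivity in degree $3$.
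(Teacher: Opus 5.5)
Your Kummer-sequence reduction and your treatment of $H^2(-,\mathbb Z/m\mathbb Z)$ match the paper: fiberwise Lefschetz for the ample divisors $C^{(j)}\subset C^{(j+1)}$ gives isomorphisms on $R^q$ for $q\le 2$ when $n\ge 3$, and one compares Leray spectral sequences. You are also right that only surjectivity on $\Pic\otimes\mathbb Z/m\mathbb Z$ is needed, since by the snake lemma $\ker(i_L^*|_{\Br[m]})$ is exactly the cokernel on $\Pic\otimes\mathbb Z/m\mathbb Z$.

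\textbf{The gap.} That surjectivity is never established, so injectivity on $\Br(-)[m]$ remains unproved. Your generator description of $\Pic(\mathcal C^{(k)}_B)$ is only asserted. The route you say you would actually take does not work: the exponential sequence computes holomorphic line bundles on $(\mathcal C^{(k)}_B)^{\mathrm{an}}$, and for a non-proper variety this is not the algebraic Picard group. Likewise, fiberwise comparisons of $H^i(C^{(k)},\mathcal O)$ or of N\'eron--Severi groups say nothing about which classes come from line bundles on the total space. In particular, a Galois-invariant class on the geometric generic fiber need not descend, and your sketch never addresses this obstruction.

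\textbf{The missing step} is a reduction to the generic fiber, which is what the paper does.
\begin{enumerate}
\item Because the fibers of $\mathcal C^{(k)}_B\to B$ are integral, the sequence $0\to\Pic(B)\to\Pic(\mathcal C^{(k)}_B)\to\Pic(\mathcal C^{(k)}_\eta)\to 0$ is exact, compatibly with $i_L^*$.
\item Hochschild--Serre gives $0\to\Pic(\mathcal C^{(k)}_\eta)\to\Pic(\mathcal C^{(k)}_{\bar\eta})^G\to\Br(k(\eta))$, with the \emph{same} target for $k=n$ and $k=n+d$. So the descent obstructions match.
\item It then suffices to know that $i_L^*\colon\Pic(\mathcal C^{(n+d)}_{\bar\eta})\to\Pic(\mathcal C^{(n)}_{\bar\eta})$ is an isomorphism for a single curve over an algebraically closed field. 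It is automatically $G$-equivariant, since $C\cap L$ is defined over $B$.
\end{enumerate}
The paper obtains the last isomorphism from Collino's theorem $\Pic(C^{(k)})\simeq\Pic(J(C))\oplus\mathbb Z$, which is the geometric-fiber version of your generator heuristic. Your compatibilities are exactly what is needed to make $i_L^*$ respect this decomposition: the Abel--Jacobi maps are intertwined by translation by $\mathcal O_C(1)$, and $x$ restricts to $x$. Alternatively, the Grothendieck--Lefschetz theorem for the ample divisors $C^{(j)}\subset C^{(j+1)}$ with $j\ge 3$ should give the same geometric-fiber isomorphism. With this in place the snake lemma finishes the argument; as written, the proposal does not.
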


\begin{proof}
By the Kummer sequence
$$0 \to \Pic(\mathcal C_B^{(n)}) \otimes \mathbb Z / m \mathbb Z \to H^2(\mathcal C_{B}^{(n)}, \mathbb Z /m \mathbb Z) \to \Br(\mathcal C^{(n)}_B)[m] \to 0,$$
it is enough to show that $i_L$ induces isomorphisms on Picard groups and second cohomology groups with finite coefficients. This is achieved in the following two lemmas.
\end{proof}

\begin{lem}\label{lem: h2 iso}
   For $n\geq 3$, the closed immersion $i_L$ induces isomorphisms
    $$i_L^* \colon H^2(\mathcal C_B^{(n+d)}, \mathbb Z / m \mathbb Z) \simeq H^2(\mathcal{C}_B^{(n)}, \mathbb Z / m \mathbb Z)$$
    for all $m \in \mathbb Z$.
\end{lem}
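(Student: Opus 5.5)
Our plan is to compare both sides with the corresponding groups of a space we understand completely, namely the relative symmetric product over the full linear system, and then remove the discriminant using the same Borel--Moore/Gysin bookkeeping as in Section~\ref{sec: brauer group}. Concretely, let $\mathcal C^{(n)} \to |L|$ denote the relative symmetric product (or relative Hilbert scheme $\mathcal C^{[n]}$) of the universal curve over all of $|\mathcal O(d)|$. The open subset $\mathcal C_B^{(n)}$ is the complement of the preimage $D_n$ of $\Delta$. The map $i_L$ extends over the locus of curves not containing $L$, and the complement of that locus has large codimension, so cohomology in degree $\le 2$ is unaffected by removing it.

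\textbf{Step 1: reduce to a statement about $H^2$ and the divisor $D_n$.} We use the Gysin sequence
\[
H^2(\mathcal C^{(n)}) \to H^2(\mathcal C^{(n)}_B) \to H^{\BM}_{2\dim D_n-1}(D_n) \to H^3(\mathcal C^{(n)}),
\]
with $\mathbb Z/m$-coefficients, together with the analogous sequence for $n+d$. The map $i_L$ is compatible with both sequences, since the subscheme $C\cap L$ moves with $C$. It then suffices to prove two things:
\begin{enumerate}
\item $i_L^*$ is an isomorphism on $H^2$ and $H^3$ of the total spaces over $|L|$ (or over a big open subset of them);
\item $i_L$ induces an isomorphism on the top-minus-one Borel--Moore homology of the discriminant divisors, and hence on the kernels.
\end{enumerate}
A five-lemma argument then gives the result.

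\textbf{Step 2: the total spaces.} Over the open set where it is defined, the evaluation map $\mathcal C^{[n]} \to (\mathbb P^2)^{[n]}$ is a projective bundle whenever $\mathcal O(d)$ is $(n-1)$-very ample. This requires $n\le d+1$, which causes trouble for $n+d$. In place of that map we therefore use the projection to $|L|$ and compare fibrewise. Over a smooth curve $C$ of genus $g$, the inclusion $C^{(n)}\hookrightarrow C^{(n+d)}$ given by adding a divisor induces an isomorphism on $H^{\le 2}$ as soon as $n\ge 3$, by Macdonald's description of the cohomology of symmetric products. For $n\ge 3$ this gives $H^1=H^1(C)$ and $H^2=\Lambda^2H^1(C)\oplus\mathbb Z$, both stable in $n$, with $\mathbb Z/m$ coefficients as well. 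Hence the morphism of Leray spectral sequences for $\mathcal C_B^{(n)}\to B$ and $\mathcal C_B^{(n+d)}\to B$ induced by $i_L$ is an isomorphism on $E_2^{p,q}$ for $q\le 2$. Since $i_L$ is a morphism over $B$, the comparison theorem then gives the isomorphism on $H^2$ directly.

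This makes Step 1 unnecessary, so the actual proof will be this fibrewise argument: identify the local systems $R^q\pi_*\mathbb Z/m$ for $q\le2$ on $B$ for both families, and show $i_L^*$ identifies them. For $q=0$ both are the constant sheaf. For $q=1$ the local system is $R^1$ of the universal curve. For $q=2$ it is $\Lambda^2 R^1\oplus \mathbb Z/m$, with the extra class given by the divisor class $x$ of $C^{(n-1)}\subset C^{(n)}$, which $i_L$ preserves because the restriction of $x$ is $x$.

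\textbf{Main obstacle.} The delicate point is to verify that $i_L^*$ is an isomorphism on fibre cohomology in degree $\le 2$ integrally with $\mathbb Z/m$ coefficients, including the torsion-free statement, and that no low-degree torsion appears for $n\ge3$. For this we would invoke Macdonald's theorem that $H^*(C^{(n)},\mathbb Z)$ is torsion-free with explicit generators $\xi_i$ and $\eta$, together with the fact that restriction along $C^{(n)}\to C^{(n+k)}$ sends generators to generators and is an isomorphism in degrees $<n$. This last fact is a Lefschetz-type statement: by Abel--Jacobi, the inclusion is built from projective bundles over $\mathrm{Pic}$ in the stable range, and we need $2<n$, i.e.\ $n\ge 3$. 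Finally, a map of first-quadrant spectral sequences that is an isomorphism on all $E_2^{p,q}$ with $q\le 2$ yields an isomorphism on $H^2$, and in fact an injection on $H^3$, which is more than we need.
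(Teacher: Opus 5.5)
Your argument, once you discard Step~1, is correct and has the same skeleton as the paper's proof. You show via proper base change that $i_L^*$ is an isomorphism on the stalks of $R^q f_*(\mathbb Z/m\mathbb Z)$ for $q\le 2$, and then compare the Leray spectral sequences. Your comparison claim for $H^2$ is right: writing $K=Rf_*(\mathbb Z/m\mathbb Z)$, one has $H^2(B,K)\simeq H^2(B,\tau_{\le 2}K)$.

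The difference lies only in the fibrewise input. The paper factors $C^{(n)}\hookrightarrow C^{(n+d)}$ into single-point additions and notes by Nakai's criterion that each $C^{(n+j)}\subset C^{(n+j+1)}$ is an ample divisor. It then applies the Lefschetz hyperplane theorem, which works directly with $\mathbb Z/m\mathbb Z$ coefficients and needs no knowledge of the cohomology ring. You use Macdonald's presentation instead, and this is self-sufficient. All of Macdonald's relations have degree $\ge n+1$, and the generators $\xi_i,\eta$ restrict to $\xi_i,\eta$. So restriction is an integral isomorphism in degrees $\le n$, hence also mod $m$ by torsion-freeness. Your route even gives the statement for $n\ge 2$ and describes the local systems explicitly.

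Two side remarks in your write-up are off, though neither affects the lemma. First, your supplementary justification that the inclusion is ``built from projective bundles over $\mathrm{Pic}$'' is wrong in the relevant range. $C^{(n)}\to\mathrm{Pic}^n(C)$ is a projective bundle only for $n\ge 2g-1$, while for $d\ge 4$ one has $g\ge 3$ and the lemma allows $n=3$. Drop it and rely on Macdonald, or on the ample-divisor Lefschetz argument. Second, injectivity on $H^3$ does not follow from isomorphisms on $E_2^{p,q}$ for $q\le 2$ alone. It also needs injectivity on $E_2^{0,3}=H^0(B,R^3f_*(\mathbb Z/m\mathbb Z))$, which does hold here, but you do not need it.
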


\begin{proof}
    Let $f_{n}: \mathcal{C}_B^{(n)} \to B$ and $f_{n+d}: \mathcal{C}_B^{(n+d)}\to B$ be the maps to the base. Consider the induced pull-back map
    \[
    i_L^*: R^k (f_{n+d})_* (\mathbb Z/ m \mathbb Z) \to  R^k (f_{n})_* (\mathbb Z/ m \mathbb Z).
    \]
    We claim that this map is an isomorphism for $k\leq 2$. Indeed, this being a sheaf-theoretic question, it suffices to show that
    \[
    i_{L,C}^*: H^k(C^{(n+d)}, \mathbb Z/m \mathbb Z) \to H^k(C^{(n)}, \mathbb Z/ m \mathbb Z)
    \]
    induced by $i_{L,C}: C^{(n)} \hookrightarrow C^{(n+d)}$ is an isomorphism over every point $[C]\in B$. The map $i_{L,C}$ can be factorized as the composition of a sequence of maps adding a single point in $C \cap L$, which we denote by $\iota_j: C^{(n+j)} \hookrightarrow C^{(n+j+1)}$ with $0\leq j \leq d-1$. By Nakai's criterion, see \cite[Chapter~VII, Proposition~2.2]{ACGH85}, the embedding $\iota_j$ realizes $C^{(n+j)}$ as an ample divisor in $C^{(n+j+1)}$. It follows from Lefschetz hyperplane theorem that the pull-back map $\iota_j^*$ is an isomorphism on $H^k(-, \mathbb Z/m \mathbb Z)$ for $k< n+j$, thus for $k\leq 2$ under our assumption. Our claim hence holds, and the lemma follows from comparing the Leray spectral sequences
    \[
    H^p(B, R^q (f_{n})_* (\mathbb Z/m \mathbb Z)) \Rightarrow H^{p+q}(\mathcal{C}_B^{(n)}, \mathbb Z/ m \mathbb Z)
    \]
    and its counterpart for $\mathcal{C}_B^{(n+d)}$.
\end{proof}

\begin{lem}\label{lem: pic iso}
    The closed immersion $i_L$ induces isomorphisms 
    $$i_L^* \colon \Pic(\mathcal C_B^{(n+d)}) \simeq \Pic(\mathcal C_B^{(n)}).$$
\end{lem}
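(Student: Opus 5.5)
The plan is to compute both Picard groups explicitly and check that $i_L^*$ matches the generators. Since $\mathcal C_B^{(n)}$ is smooth, its Picard group is the divisor class group, so I would work with a partial compactification where divisors are easy to count. For $n\le d$ the map $\mathcal C^{[n]}\to(\mathbb P^2)^{[n]}$ is a projective bundle, because $\mathcal O(d)$ is $n$-very ample. Its fibre over $Z$ is $\mathbb P(H^0(\mathcal I_Z(d)))$. Hence $\mathrm{Pic}(\mathcal C^{[n]})\simeq \mathrm{Pic}((\mathbb P^2)^{[n]})\oplus\mathbb Z H$, which is free of rank $3$ for $n\ge2$, generated by $h$ (the class pulled back from $\mathrm{Sym}^n\mathbb P^2$), the half-boundary $\delta=E/2$, and $H$.

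The open subset $\mathcal C_B^{(n)}\subset\mathcal C^{[n]}$ is the complement of the preimage of $\Delta$. That preimage is irreducible when $n\le d$, being a bundle over the irreducible incidence of $\Sigma$ with $n$ points; equivalently, $\mathcal C_B^{(n)}=\mathcal C^{[n]}\times_{|L|}B$. So $\mathrm{Pic}(\mathcal C_B^{(n)})$ is the quotient of $\mathrm{Pic}(\mathcal C^{[n]})$ by the class $d(d-1)^2H+(\text{correction in }h,\delta)$ of that preimage. This is a finitely generated group whose torsion is computable.

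For $n>d$ this projective bundle description fails. Instead I would argue fibrewise over $B$, using a Lefschetz-type argument and the Leray spectral sequence in the same way as in Lemma~\ref{lem: h2 iso}. Concretely, consider the exponential sequence and the restriction to a fibre. Three facts feed into it:
\begin{itemize}
\item $H^1(\mathcal O)$ of $C^{(n)}$ is the same as for $\mathrm{Pic}^n(C)$.
\item $\mathrm{NS}(C^{(n+d)})\to\mathrm{NS}(C^{(n)})$ is an isomorphism by Lefschetz; it holds integrally because $H^2$ is torsion free and $H^2(\mathbb Z)$ is iso for $n\ge 3$.
\item $H^1(\mathbb Z)$ is iso as well.
\end{itemize}

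From these I would build the comparison. Write $0\to \mathrm{Pic}^0\to\mathrm{Pic}\to\mathrm{NS}$ relative to $B$. The global Picard group sits in the Leray filtration
\[
0\to \mathrm{Pic}(B)\to \mathrm{Pic}(\mathcal C_B^{(n)})\to H^0(B,\mathrm{Pic}_{\mathcal C^{(n)}_B/B})\to \mathrm{Br}(B).
\]
The relative Picard sheaves of the two symmetric products are identified by $i_L^*$: on each fibre $\mathrm{Pic}(C^{(n+d)})\to\mathrm{Pic}(C^{(n)})$ is an isomorphism. This follows from Lefschetz on $H^1,H^2$ together with $\mathrm{Pic}^0(C^{(k)})\simeq J(C)$ for all $k\ge2$, compatibly with Abel--Jacobi. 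Moreover $i_L$ commutes with the maps to $B$ and admits sections-free compatibility on $\mathrm{Pic}(B)$ (which is $0$ or torsion, as $B$ is affine-ish with $\mathrm{Pic}(B)=\mathbb Z/d(d-1)^2$ hmm). The five lemma then gives the isomorphism, provided the last maps to $\mathrm{Br}(B)$ agree. They do, by functoriality of Leray, since $i_L$ is over $B$.

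The main obstacle is the fibrewise isomorphism of Picard groups integrally, not just on $\mathrm{NS}\otimes\mathbb Q$. I would first try Grothendieck--Lefschetz: $C^{(n+j)}$ is an ample divisor in $C^{(n+j+1)}$ of dimension $\ge 3$ when $n\ge3$, so the Picard groups agree. If that fails for small dimensions, I would fall back on the exponential sequence together with the integral Lefschetz isomorphisms on $H^1$ and $H^2$.
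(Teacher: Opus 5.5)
\textbf{There is a gap.} The skeleton of your argument is sound: the Leray sequence $0\to\Pic(B)\to\Pic(\mathcal C_B^{(m)})\to H^0(B,\Pic_{\mathcal C_B^{(m)}/B})\to\Br(B)$ for $m=n,n+d$, a fibrewise isomorphism from Lefschetz, and the five lemma. The middle step, however, is not justified. You conclude that $i_L^*$ identifies the relative Picard sheaves because $\Pic(C^{(n+d)})\to\Pic(C^{(n)})$ is an isomorphism for every curve $C$. But the stalk of $\Pic_{\mathcal C^{(m)}_B/B}=R^1f_*\mathbb G_m$ at a geometric point $\bar b$ is $\Pic(\mathcal C^{(m)}_B\times_B\operatorname{Spec}\mathcal O^{\mathrm{sh}}_{B,\bar b})$, not $\Pic(C_b^{(m)})$. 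Restriction to the closed fibre is in general neither injective nor surjective:
\begin{itemize}
\item it is not injective, because of line bundles pulled back along Abel--Jacobi from sections of the Jacobian that vanish at $b$;
\item it is not surjective, because of classes in $\NS(C_b^{(m)})$, e.g.\ from a jump of $\NS(J(C_b))$, that stay algebraic only along a Noether--Lefschetz locus through $b$.
\end{itemize}
So an isomorphism on geometric fibres says nothing directly about stalks. Nor can you use the fibrewise criterion for isomorphisms of Picard schemes: these are not flat over $B$, precisely because of this jumping. What you actually need is that a line bundle on $\mathcal C^{(n)}$ over an \'etale neighbourhood extends to $\mathcal C^{(n+d)}$ over that neighbourhood. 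That is a relative version of the statement you are trying to prove.

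\textbf{How the paper avoids this.} It passes to the generic point, where a single geometric fibre suffices.
\begin{itemize}
\item Since $\mathcal C_B^{(m)}$ is smooth and the fibres of $\mathcal C_B^{(m)}\to B$ are integral, there is an exact sequence $0\to\Pic(B)\to\Pic(\mathcal C_B^{(m)})\to\Pic(\mathcal C^{(m)}_\eta)\to 0$. Surjectivity comes from taking closures of divisors on the generic fibre; a divisor trivial there is vertical, hence pulled back from $B$.
\item Hochschild--Serre gives $0\to\Pic(\mathcal C^{(m)}_\eta)\to\Pic(\mathcal C^{(m)}_{\bar\eta})^G\to\Br(k(\eta))$, compatibly with $i_L^*$.
\item Your fibrewise isomorphism is now needed only for $\mathcal C_{\bar\eta}$. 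It is $G$-equivariant because $i_L$ is defined over $k(\eta)$, so it passes to invariants.
\end{itemize}
The paper uses Collino's description $\Pic(C^{(m)})\simeq\Pic(J(C))\oplus\mathbb Z$ at that step. Your Grothendieck--Lefschetz argument, valid since $\dim C^{(n+j)}\ge 3$, would work equally well there. Two diagram chases then conclude.

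Two smaller remarks. The integral fibrewise isomorphism, which you flag as the main obstacle, is actually the easy part. Your explicit computation for $n\le d$ is not needed, and it contains a slip: the removed divisor is the pullback of $\Delta$ along the flat map to $|L|$. Its class is $3(d-1)^2H$ with no correction in $h,\delta$, so $\Pic(B)\simeq\mathbb Z/3(d-1)^2\mathbb Z$.
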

\begin{proof}
   
    Since the fibers of $\mathcal C_B^{(n)} \to B$ are integral, we obtain the diagram
    $$\begin{tikzcd}
0 \arrow[r] & \Pic(B) \arrow[d, "="] \arrow[r] & {\Pic(\mathcal C_B^{(n+d)})} \arrow[d] \arrow[r] & {\Pic(\mathcal C^{(n+d)}_\eta)} \arrow[d] \arrow[r] & 0 \\
0 \arrow[r] & \Pic(B) \arrow[r]                & {\Pic(\mathcal C_B^{(n)})} \arrow[r]             & {\Pic(\mathcal C^{(n)}_\eta)} \arrow[r]             & 0
\end{tikzcd}$$
    of short exact sequences, where $\eta \in B$ denotes the generic point of $B$. On the other hand, the Hochschild--Serre spectral sequence yields the the diagram
    $$\begin{tikzcd}
0 \arrow[r] & {\Pic(\mathcal C^{(n+d)}_\eta)} \arrow[d] \arrow[r] & {\Pic(\mathcal C_{\overline{\eta}}^{(n+d)})^G} \arrow[d] \arrow[r]& \Br(k(\eta)) \arrow[d, "="] \\
0 \arrow[r] & {\Pic(\mathcal C_\eta^{(n)})} \arrow[r]             & {\Pic(\mathcal C_{\overline{\eta}}^{(n)})^{G}} \arrow[r]          & \Br(k(\eta))               
\end{tikzcd}$$
of exact sequences, where $G \coloneq \Gal(k(\overline{\eta})/k(\eta))$. Since $\mathcal C_{\overline{\eta}}$ is a curve over an algebraically closed field, we have
$$\Pic(\mathcal C^{(n)}_{\overline{\eta}}) \cong \Pic(J(\mathcal C_{\overline{\eta}})) \oplus \mathbb Z,$$
see \cite[Theorem~3]{Collino}. From this description, one deduces that the morphism
$$i_L^* \colon \Pic(\mathcal C^{(n+d)}_{\overline{\eta}})^G \to \Pic(\mathcal C^{(n)}_{\overline{\eta}})^G$$
is an isomorphism. The claim then follows by applying the snake lemma to the two diagrams above.
\end{proof}

This establishes Proposition~\ref{prop: brauer isom} via the Kummer sequence as we already explained. Now we compute the Brauer group of relative symmetric products of curves with $n\leq d $ points.

\begin{prop}\label{prop: brauer of symm}
    For $d\geq 4$ and $n\leq d$, the Brauer group of the $n$-th symmetric product of the universal smooth plane curve of degree $d$ is isomorphic to the Brauer group of the base $|L|_{\mathrm{sm}}$, i.e.,
    $$\Br(\mathcal C_B^{(n)}) \simeq \begin{cases}0 & \text{if } d \text{ is odd,} \\
    \mathbb Z / 2 \mathbb Z &\text{if } d \text{ is even.}\end{cases}$$
More precisely, the map $\mathcal C_B^{(n)} \to B$ induces an isomorphism
$$\Br(B) \simeq \Br(\mathcal C_B^{(n)}).$$
\end{prop}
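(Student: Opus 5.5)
We will run the argument of Section~\ref{sec: brauer group} once more, now on the relative Hilbert scheme. Let $\mathcal{C}^{[n]} \to |L|$ be the relative Hilbert scheme of $n$ points on the universal curve, with $L=\mathcal O_{\mathbb P^2}(d)$. Since $L$ is $d$-very ample and $n\le d$, the evaluation map $\mathcal C^{[n]}\to(\mathbb P^2)^{[n]}$ is a projective bundle, with fibre $\mathbb P(\ker(H^0(L)\to H^0(L|_Z)))$ over $[Z]$. Hence $\mathcal{C}^{[n]}$ is smooth and irreducible. Its Brauer group vanishes and $H^1(\mathcal C^{[n]},\mathbb Z/m)=0$, because $(\mathbb P^2)^{[n]}$ is smooth, projective, rational and simply connected. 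Over $B$ the fibres are smooth curves, so $\mathcal C^{[n]}|_B=\mathcal C^{(n)}_B$.

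Set $D_n:=\mathcal C^{[n]}\setminus\mathcal C^{(n)}_B$, the preimage of $\Delta$. It is a divisor, being the pull-back of the Cartier divisor $\Delta$. Lemma~\ref{lem:brauer_bm} then gives
\[
\Br(\mathcal C^{(n)}_B)[m]\simeq \ker\bigl(H^{\BM}_{2\dim D_n-1}(D_n,\mathbb Z/m)\to H^3(\mathcal C^{[n]},\mathbb Z/m)\bigr).
\]
We will compute this Borel--Moore group by stratifying $D_n$ exactly as $\Delta$ was stratified.

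First, the components of $D_n$ of top dimension. Over a general $[C]\in\Delta_1$ with node $p$, the fibre $C^{[n]}$ is irreducible of dimension $n$: the punctual part supported at $p$ has dimension at most $n$ and lies in the closure of the smoothable part. So $D_n$ has a unique top-dimensional component, dominating $\Delta$, with $2\dim D_n-2$ homology generated by its fundamental class. Over $\Delta_{\ge2}$ and the deeper strata the fibres have dimension $n$ but the base has codimension at least one, so these loci have real codimension at least $2$ in $D_n$. The same is true of the locus in $D_n$ of subschemes meeting the node. Codimension requires some care here: the codimension of $\Delta_\infty$ in $\Delta$ is at least $3$ (assumption (ii)), which more than compensates for the possible jump in fibre dimension of the punctual Hilbert schemes.

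Next we mimic Lemma~\ref{lemma: br isom to ker}. Replace $\Sigma$ by $\Sigma^{(n)}:=\Sigma\times_\Delta D_n$. An open part of it is a projective bundle over the incidence of a point $p\in\mathbb P^2$ with a length-$n$ subscheme $Z$ whose support avoids $p$; over the remaining locus the fibres still have the expected dimension. Using $H^1=0$ of this base, which is again simply connected, and the dimension count for the $\Delta_{\ge2}$-part, we get
\[
\Br(\mathcal C^{(n)}_B)[m]\simeq\ker\bigl(H_{top-2}(\Sigma^{(n)}_{\ge2})\to H_{top-2}(\Sigma^{(n)})\oplus H_{top-2}(D_{n,\ge2})\bigr).
\]
The top-dimensional component of $\Sigma^{(n)}_{\ge2}$ is the preimage of $\overline{\Sigma_2}$, and it stays irreducible. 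Its image in $D_{n,\ge 2}$ has degree $2$, since it lies over $\overline{\Delta_2}$, whose general member has two nodes. Hence the kernel is at most $\mathbb Z/2$ and vanishes for odd $m$. Moreover, the whole diagram is obtained by pull-back along $\mathcal C^{[n]}\to|L|$, so the relevant class is the pull-back of $[\overline{\Sigma_2}]$. Its vanishing mod $2$ is therefore equivalent to the same vanishing for $n=0$, which was decided in the proof of Theorem~\ref{thm: brauer P2}. This gives the stated groups.

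Finally, naturality of Lemma~\ref{lem:brauer_bm} under the flat pull-back $\mathcal C^{[n]}\to|L|$ identifies the isomorphism with $\Br(B)\to\Br(\mathcal C^{(n)}_B)$. The main obstacle is the first step: controlling dimensions and irreducibility of the relative Hilbert schemes over singular curves. This concerns punctual Hilbert schemes at nodes and at worse singularities, and the irreducibility of the preimages of $\overline{\Sigma_2}$ and $\Delta$. I would first handle this via the projective-bundle description over $(\mathbb P^2)^{[n]}$. With that description, each such preimage is a projective bundle over an explicit irreducible incidence variety of pairs (point or two points, $Z$), which avoids fibrewise analysis except near the diagonal loci of small dimension.
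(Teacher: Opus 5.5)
Your strategy is the paper's: apply Lemma~\ref{lem:brauer_bm} to $\mathcal C^{[n]}$ and the divisor $\mathcal C^{[n]}_\Delta$, kill $H_{\mathrm{top}-1}(\mathcal C^{[n]}_\Sigma)$ with the projective bundle $g\colon\mathcal C^{[n]}_\Sigma\to(\mathbb P^2)^{[n]}\times\mathbb P^2$ over a simply connected open set, reduce to a kernel on the $\Sigma_{\ge2}$-part, and compare with $n=0$ by flat pull-back. However, two steps that carry the real content are asserted rather than proved.

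\textbf{Irreducibility (where $d\ge4$ enters).} You claim that the preimage of $\overline{\Sigma_2}$ (and likewise that of $\overline{\Delta_2}$) ``stays irreducible''. Your argument never uses $d\ge4$, yet for $d=3$ the conclusion is false: $\Br(B)=0$ while $\Br(\mathcal C_B)\simeq\mathbb Z/2\mathbb Z$ (Section~\ref{sec: appendix cubics}). There the general member of $\Delta_{\ge2}$ is a conic plus a line, and the preimage splits into the components $\mathcal C^{[c,l]}$. The missing input is that for $d\ge4$ the general member of $\Delta_{\ge2}$ is integral. Its Hilbert scheme of $n$ points is then irreducible, and together with the equidimensionality (flatness) of $\mathcal C^{[n]}\to|L|$, which you already use, this gives irreducibility in top dimension. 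Your alternative, a projective bundle over triples $(p,q,Z)$, can be made to work for $d\ge4$. But you must first show that the locus where $Z$ and the two first-order fat points at $p,q$ impose dependent conditions has small preimage. For $d=3$ this locus (some point of $Z$ on the line $\overline{pq}$) is a divisor, and it produces exactly the extra components.

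\textbf{Odd $d$.} You assert that vanishing of the pulled-back class mod $2$ is \emph{equivalent} to vanishing of $[\overline{\Sigma_2}]\in H^2(\Sigma,\mathbb Z/2\mathbb Z)$. Flat pull-back only gives one implication, which suffices for even $d$. For odd $d$ you need the pull-back of $[\overline{\Sigma_2}]$ to remain nonzero in $H_{\mathrm{top}-2}(\mathcal C^{[n]}_\Sigma,\mathbb Z/2\mathbb Z)$, and $k$ times it to remain nonzero with $\mathbb Z/2k\mathbb Z$ coefficients. This is not formal, since pull-backs can kill classes. The paper supplies it by restricting to a fibre of $g$ over a point $(Z,p)\in W$, namely the linear system of curves through $Z$ singular at $p$. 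There $h$ restricts to $0$ and the class becomes $(\deg_{\Sigma/\mathbb P^2}\Sigma_{\ge2})\cdot H$. Its coefficient is $\equiv d \pmod 2$ by the pencil count of Lemma~\ref{lemma: relative degree}, hence odd, which gives injectivity for all $m$.

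\textbf{Minor corrections.}
For $n=d$, $g$ is a projective bundle only after also removing the pairs with $Z\cup p$ collinear, since then $Z$ and the fat point at $p$ have length $d+2$ on a line. This is harmless, as that locus has codimension at least two.
You also drop the Gysin map to $H^3(\mathcal C^{[n]},\mathbb Z/m\mathbb Z)$ without comment. This is justified because $(\mathbb P^2)^{[n]}$ has cohomology only in even degrees.
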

\begin{proof}
We follow the same strategy as in Section ~\ref{sec: P2}.
Since $\mathcal O_{\mathbb P^2}(d)$ is $d$-very ample, the natural evaluation map
$$\mathcal C^{[n]} \to (\mathbb P^2)^{[n]}$$
is a projective bundle. In particular, the relative Hilbert scheme $\mathcal C^{[n]}$ is smooth and satisfies $$H^3(\mathcal C^{[n]}, \mathbb Z / m \mathbb Z) = 0.$$ We also have $\mathrm{Br}(\mathcal{C}^{[n]}) = 0$ since $(\mathbb P^2)^{[n]}$ is rational. By Lemma \ref{lem:brauer_bm}, we thus have\footnote{We write ``$\mathrm{top}$'' to denote the top homological degree, i.e.~twice the dimension of the space considered.}
$$\Br(\mathcal C_B^{(n)})[m] \simeq H_{\mathrm{top}-1}(\mathcal C^{[n]}_{\Delta}, \mathbb Z / m \mathbb Z).$$
In the following, we write $\mathcal{C}^{[n]}_T \coloneq \mathcal{C}^{[n]}_\Delta \times_\Delta T$ for a map $T \to \Delta$.
Consider the map 
\begin{align*}g \colon \mathcal C^{[n]}_\Sigma &\to (\mathbb P^2)^{[n]} \times \mathbb P^2 \qquad \qquad \quad \\
(Z \subset C,\, p \in \Sing(C)) & \mapsto (Z, p),\qquad \qquad \quad 
\end{align*}
which is a projective bundle over the open subset
$$W \coloneqq \begin{cases}
\{(Z, p) \in (\mathbb P^2)^{[n]} \times \mathbb P^2 \mid p \not\in Z \text{ and } Z \cup p \text{ is not contained in any line}\} & \text{if } n = d,\\
\{(Z, p) \in (\mathbb P^2)^{[n]} \times \mathbb P^2 \mid p \not \in Z \} & \text{otherwise,}
\end{cases} $$
see e.g., {\cite[Lemma~2]{computingsym2011}}.
In particular, $g^{-1}(W) \subset \mathcal C^{[n]}_{\Sigma}$ is smooth. Note that the complement of $W$ in $(\mathbb P^2)^{[n]} \times \mathbb P^2$ has codimension at least two and thus we have
\small
$$H^{\BM}_{\mathrm{top}-1}(g^{-1}(W), \mathbb Z /m \mathbb Z) \simeq H^1(g^{-1}(W), \mathbb Z /m \mathbb Z) \simeq H^1(W, \mathbb Z/m \mathbb Z) \simeq H^1((\mathbb P^2)^{[n]} \times \mathbb P^2, \mathbb Z / m \mathbb Z) = 0.$$
\normalsize
The excision exact sequence for $g^{-1}(W) \subset \mathcal C^{[n]}_\Sigma$ then implies 
$$H_{\mathrm{top}-1}(\mathcal C^{[n]}_\Sigma, \mathbb Z / m \mathbb Z) = 0.$$
Since $\mathcal{C}^{[n]}_\Sigma \to \mathcal{C}^{[n]}_\Delta$ is a homeomorphism over $\Delta_1$, it follows that
$$H_{\mathrm{top}-1}(\mathcal C_{\Delta}^{[n]}, \mathbb Z / m \mathbb Z) \simeq \ker(H_{\mathrm{top}}(\mathcal C^{[n]}_{\Sigma_{\geq 2}}, \mathbb Z / m \mathbb Z) \to H_{\mathrm{top}-2}(\mathcal C^{[n]}_{\Sigma}, \mathbb Z / m \mathbb Z) \oplus H_{\mathrm{top}}(\mathcal C_{\Delta_{\geq 2}}^{[n]}, \mathbb Z /m \mathbb Z)).$$
Furthermore, since the fibers of $\mathcal C \to |\mathcal O_{\mathbb P^2}(d)|$ are plane curves, the map $\mathcal C^{[n]}\to |\mathcal O_{\mathbb P^2}(d)|$ is equidimensional, see \cite{luan2023irreduciblecomponentshilbertscheme}, and thus flat by miracle flatness. Flat pullback for Borel--Moore homology then provides us with the commutative diagram
$$\begin{tikzcd}
H_{\mathrm{top}}(\mathcal C^{[n]}_{\Sigma_{\geq 2}}, \mathbb Z / m \mathbb Z) \arrow[r] & H_{\mathrm{top}-2}(\mathcal C^{[n]}_{\Sigma}, \mathbb Z / m \mathbb Z) \oplus H_{\mathrm{top}}(\mathcal C_{\Delta_{\geq 2}}^{[n]}, \mathbb Z / m \mathbb Z)) \\
H_{\mathrm{top}}(\Sigma_{\geq 2}, \mathbb Z /m \mathbb Z) \arrow[r]\arrow[u] & H_{\mathrm{top}-2}(\Sigma, \mathbb Z /m \mathbb Z) \oplus H_{\mathrm{top}}(\Delta_{\geq 2}, \mathbb Z / m \mathbb Z) \arrow[u].
\end{tikzcd}$$

For $d\geq 4$, a general member of $\Delta_{\geq 2}$ is integral, so the fibers of $\mathcal{C}_{\Delta_{\geq 2}}^{[n]}$ are irreducible. It follows that $\mathcal{C}_{\Sigma_{\geq 2}}^{[n]}$ is also irreducible 
and the left vertical map is an isomorphism. This concludes the proof for even degrees. For odd $d \geq 5$, a computation as in the proof of Theorem \ref{thm: brauer P2} shows that the degree of $\mathcal{C}_{\Sigma_{\geq 2}}^{[n]}$ restricted to a fiber of the map
$$\mathcal C_\Sigma^{[n ]} \to (\mathbb P^2)^{[n]} \times \mathbb P^2$$
over a point in the open subset $W$ is odd. As in the proof of Theorem \ref{thm: brauer P2}, we conclude that the top horizontal map in the above diagram is injective for all $m$ and odd $d \geq 5$, proving the vanishing of Brauer groups.
\end{proof}

\begin{rmk}
    For $d=3$, the locus $\mathcal{C}_{\Sigma_{\geq 2}}^{[n]}$ becomes reducible and the computation requires further analysis. Indeed, we shall see in Section~\ref{sec: appendix cubics} that $\mathrm{Br}(\mathcal{C}_B^{(n)}) \simeq \mathbb Z/2\mathbb Z$ for plane cubic curves (with $1\leq n\leq 3$).
\end{rmk}

The following corollary will be crucially used in the proof of Theorem~\ref{thm: tate-shafarevich main text}.

\begin{cor} \label{cor: brauer of symm}
    For $d\geq 4$ and $k\geq 1$, we have 
    \[
    \Br(\mathcal C_B^{(kd)}) \simeq \begin{cases}0 & \text{if } d \text{ is odd,} \\
    \mathbb Z / 2 \mathbb Z &\text{if } d \text{ is even.}\end{cases}
    \]
\end{cor}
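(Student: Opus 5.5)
The plan is to combine Proposition~\ref{prop: brauer of symm} at $n=d$ with an induction on $k$ driven by Proposition~\ref{prop: brauer isom}. The base case $k=1$ is exactly Proposition~\ref{prop: brauer of symm} with $n=d$. This is allowed since $d\geq 4$ and $n=d\leq d$, and it gives $\Br(\mathcal C_B^{(d)})\simeq \Br(B)$, which is $0$ for odd $d$ and $\mathbb Z/2\mathbb Z$ for even $d$ by Theorem~\ref{thm: brauer P2}.

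For the inductive step, I apply Proposition~\ref{prop: brauer isom} with $n=kd$. The hypothesis $n\geq 3$ holds because $kd\geq d\geq 4$. This yields isomorphisms
\[
i_L^*\colon \Br(\mathcal C_B^{((k+1)d)})[m]\simeq \Br(\mathcal C_B^{(kd)})[m]
\]
for every $m\in\mathbb Z$.

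Next I pass from $m$-torsion to the whole group. Since $\mathcal C_B^{(n)}$ is a smooth variety (it is smooth over the smooth base $B$), its cohomological Brauer group is torsion. Hence $\Br(\mathcal C_B^{(n)})=\bigcup_m \Br(\mathcal C_B^{(n)})[m]$.

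The maps $i_L^*$ for varying $m$ are restrictions of a single homomorphism $i_L^*\colon \Br(\mathcal C_B^{((k+1)d)})\to\Br(\mathcal C_B^{(kd)})$. That homomorphism is therefore injective, because any kernel element is $m$-torsion for some $m$. It is also surjective, because any target element is $m$-torsion for some $m$ and so has a preimage. So $\Br(\mathcal C_B^{((k+1)d)})\simeq \Br(\mathcal C_B^{(kd)})$, and induction on $k$ gives the claim for all $k\geq 1$.

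There is no real obstacle here. The only points needing care are that the degree hypothesis $n\geq 3$ in Proposition~\ref{prop: brauer isom} is met at each step, and that the torsion statements assemble into an isomorphism of full Brauer groups; both are handled above. I would also note that the identification is compatible with pullback from $B$, since $i_L$ commutes with the projections to $B$. This gives the refined statement $\Br(B)\simeq\Br(\mathcal C_B^{(kd)})$ via $f_{kd}^*$, which will be the form used later.
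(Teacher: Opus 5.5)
Your proof is correct and is the paper's argument: the base case $k=1$ is Proposition~\ref{prop: brauer of symm} with $n=d$, and the inductive step applies Proposition~\ref{prop: brauer isom} with $n=kd\geq 4$. You also spell out two points the paper leaves to ``follows immediately'': the passage from the $m$-torsion isomorphisms to the full Brauer groups (using that $\Br$ of the smooth variety $\mathcal C_B^{(n)}$ is torsion), and the compatibility of $i_L^*$ with pullback from $B$.
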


\begin{proof}
    This follows immediately from Propositions~\ref{prop: brauer isom} and \ref{prop: brauer of symm}.
\end{proof}

\section{Torsors over the Jacobian of universal smooth plane curves} \label{sec: torsors over Jacobian}

We study the Tate--Shafarevich groups for relative Jacobians of plane curves in this section. In particular, we prove the $d\geq 4$ cases of Theorem~\ref{thm: tate-shafarevich}. We assume $d\geq 3$ throughout this section.

\begin{lem}\label{lem: desc ns}
The relative Neron--Severi sheaf $\NS_{\mathcal{J}/B}$ is the constant sheaf generated by the theta divisor, i.e.
$$\NS_{\mathcal{J}/B} \simeq \mathbb Z \cdot \Theta$$
\end{lem}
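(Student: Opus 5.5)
The plan is to compute the stalks of $\NS_{\mathcal J/B}$ at geometric points and then study the monodromy action of $\pi_1(B)$ on them. Because $\mathcal J\to B$ is an abelian scheme, $\NS_{\mathcal J/B}$ is a locally constant étale sheaf of finitely generated free abelian groups. Its stalk at $[C]\in B$ is $\NS(J(C))$, which sits inside $H^2(J(C),\mathbb Z)=\bigwedge^2 H^1(C,\mathbb Z)^\vee$ as the $(1,1)$-classes. The theta divisor gives a global section, because the principal polarization of the relative Jacobian is defined over $B$. Hence $\mathbb Z\cdot\Theta\subset \NS_{\mathcal J/B}$, and the sub-lattice is saturated since $\Theta$ is primitive on a Jacobian. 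We therefore have to show that the quotient vanishes, i.e.\ that every stalk is generated by $\Theta$.

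\textbf{Step 1: reduce to the generic fibre.} Since the sheaf is locally constant and $B$ is connected, it is enough to prove that $\NS_{\mathcal J/B}$ has rank one at the geometric generic point. Equivalently, a very general smooth plane curve $C$ of degree $d$ should satisfy $\NS(J(C))=\mathbb Z\Theta$. By Hodge theory, $\NS(J(C))\otimes\mathbb Q$ is identified with the symmetric endomorphisms in $\mathrm{End}_{\mathbb Q}(J(C))$ under the Rosati involution. So the goal becomes $\mathrm{End}(J(C))=\mathbb Z$ for very general $C$.

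\textbf{Step 2: use monodromy.} The monodromy of $R^1f_*\mathbb Z$ for the universal smooth plane curve over $B$ is known to be large. Concretely, by the results of Beauville on the monodromy of families of hypersurfaces and plane curves, the monodromy group is of finite index in $\mathrm{Sp}(H^1(C,\mathbb Z))$ for $d$ odd. For $d$ even it is the finite-index subgroup preserving a spin/theta characteristic structure, which is still Zariski dense in $\mathrm{Sp}$. Global sections of $\NS_{\mathcal J/B}\otimes\mathbb Q$ embed into the monodromy invariants of $\bigwedge^2 H^1(C,\mathbb Q)$. For a Zariski dense subgroup of $\mathrm{Sp}_{2g}$, the decomposition $\bigwedge^2 V=\mathbb Q\omega\oplus \bigwedge^2_0V$ into irreducibles shows that the only invariant line is the one spanned by the symplectic form, i.e.\ by $\Theta$. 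The same density also gives the stalk statement. The locus of curves with $\rho(J(C))>1$ is a countable union of proper Hodge loci, and a very general $C$ avoids them. Its Mumford--Tate group is then the full $\mathrm{GSp}$, because the Mumford--Tate group contains the connected monodromy group, which is $\mathrm{Sp}$. Hence $\NS(J(C))=\mathbb Z\Theta$, and the generic stalk has rank one.

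\textbf{Step 3: pass from the generic stalk to every stalk.} For an arbitrary point $[C]\in B$ the fibre may well have larger Néron--Severi group, so the sheaf is not naively locally constant with rank one. The étale sheaf $\NS_{\mathcal J/B}$ should be read as the sheaf $R^2$ of $\mathbb G_m$ modulo $\Pic^0$. I will argue that it is the sub-sheaf of $R^2f_*\mathbb Z$ consisting of classes that are locally (in the étale topology) algebraic. A section over an étale neighbourhood remains of type $(1,1)$ on a nearby very general fibre, and so is a multiple of $\Theta$ there; by continuity it is that multiple everywhere. Hence every section is locally $\mathbb Z\Theta$. This gives $\NS_{\mathcal J/B}\simeq\mathbb Z\cdot\Theta$.

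The main obstacle is Step 2, namely citing or justifying Zariski density of the monodromy in $\mathrm{Sp}$ for even $d$, where the monodromy is genuinely smaller. I would handle it by noting that any finite-index subgroup of an arithmetic group is still Zariski dense, or by a Picard--Lefschetz argument. Since $\Delta$ is irreducible, all vanishing cycles are conjugate. The transvections in these cycles generate a group acting irreducibly on $H^1$, and such a group has Zariski closure $\mathrm{Sp}$.
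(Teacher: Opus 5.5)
Your argument is essentially the paper's proof. Step~3 embeds $\NS_{\mathcal J/B}$ into the locally constant sheaf $R^2f_*\mathbb Z_\ell$ and evaluates a section over an \'etale $U\to B$ at a very general point, where $\NS(\Pic^0(C))=\mathbb Z\cdot\Theta$; this is exactly what the paper does, and your Step~2 additionally proves (via Beauville's monodromy theorem and a Mumford--Tate argument) the very-general Picard-number-one fact that the paper simply takes as known.

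Two slips to fix:
\begin{itemize}
\item The stalk of $\NS_{\mathcal J/B}$ at $[C]$ is not $\NS(J(C))$. It consists only of the classes that extend over an \'etale neighbourhood, as your own Step~3 recognizes.
\item You have Beauville's parity reversed. The monodromy is all of $\mathrm{Sp}(H^1(C,\mathbb Z))$ for $d$ even. For $d$ odd it preserves the quadratic form of the theta characteristic $\mathcal O_C(\tfrac{d-3}{2})$. This is harmless for your argument, since both groups have finite index in $\mathrm{Sp}$ and are therefore Zariski dense.
\end{itemize}
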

\begin{proof}
Fix a prime $\ell$. Then, the injective morphism $\NS_{\mathcal{J}/B} \to R^2f_* \mathbb Z_\ell$, where $f \colon \mathcal J \to B$ denotes the relative Jacobian of the universal plane curve, induces an inclusion
$$H^0(U, \NS_{\mathcal{J}/B}) \subset H^0(U, R^2f_* \mathbb Z_\ell)$$
for any \'etale $U \to B$.
As the very general plane curve $C \subset \mathbb P^2$ satisfies $\NS(\Pic^0(C)) = \mathbb Z \cdot \Theta$, the claim follows.
\end{proof}

\begin{lem}\label{lem: desc picJ}
We have
$$H^0(B, \Pic_{\mathcal{J}/B}) = \begin{cases}
    \mathbb Z \cdot \Theta & \text{if } d \text{ is odd,} \\
    \mathbb Z \cdot 2 \Theta & \text{if } d \text{ is even}.
\end{cases}$$
\end{lem}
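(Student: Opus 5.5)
The plan is to compute $H^0(B,\Pic_{\mathcal J/B})$ through the exact sequence of étale sheaves
\[
0 \to \Pic^0_{\mathcal J/B} \to \Pic_{\mathcal J/B} \to \NS_{\mathcal J/B} \to 0 .
\]
Taking global sections gives
\[
0 \to H^0(B,\Pic^0_{\mathcal J/B}) \to H^0(B,\Pic_{\mathcal J/B}) \to H^0(B,\NS_{\mathcal J/B}) = \mathbb Z\cdot\Theta \to H^1(B,\Pic^0_{\mathcal J/B}),
\]
where the identification $H^0(B,\NS_{\mathcal J/B}) = \mathbb Z\cdot\Theta$ is Lemma~\ref{lem: desc ns}. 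Here $\Pic_{\mathcal J/B}$ is understood as the sheaf up to the relevant choice: its sections over $U$ are line bundles on $\mathcal J_U$ modulo pullbacks from $U$.

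The first step is to show $H^0(B,\Pic^0_{\mathcal J/B})=0$. Since $\Pic^0_{\mathcal J/B}$ is the dual abelian scheme, identified with $\mathcal J$ via the principal polarization, this amounts to showing that the relative Jacobian has no nonzero sections over $B$. I would first argue that the group of sections is torsion. The Mordell--Weil group over the function field is finitely generated by Lang--Néron, and since $\mathcal J$ has trivial $\mathbb C(B)/\mathbb C$-trace, I expect any section to come from degree-zero divisor classes canonically attached to plane curves. These are multiples of $\mathcal O_C(1)$ in degree $d$, and their differences give the trivial class. To rule out torsion sections, I would use that the monodromy of the universal smooth plane curve on $H^1(C,\mathbb Z/\ell)$ is big, namely the full symplectic group or at least irreducible. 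A nonzero invariant vector is then impossible, so there are no torsion sections.

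Next I need the image of $H^0(B,\Pic_{\mathcal J/B})\to\mathbb Z\cdot\Theta$. The point is that $\Theta$ is canonically defined on $\Pic^{g-1}$, not on $\Pic^0$. To get a symmetric theta divisor on $\mathcal J=\Pic^0$, one translates by a theta characteristic $\kappa$ with $\kappa^{\otimes 2}=K_C=\mathcal O_C(d-3)$.

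When $d$ is odd, $\kappa=\mathcal O_C((d-3)/2)$ is globally defined over $B$. So $\Theta$ lifts to a genuine line bundle on $\mathcal J$, namely the pullback of $\mathcal O(W_{g-1})$ under translation by $\kappa$, and the map is surjective.

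When $d$ is even, $2\Theta$ still lifts. One option is the translate of $\Theta+\Theta^-$ by any degree-$(g-1)$ class, say $\mathcal O_C(1)^{\otimes a}$ twisted appropriately. Another is to use the Poincaré bundle pulled back along $x\mapsto(x,x)$, or the class $\Theta_\xi+\Theta_{-\xi}$, which is independent of $\xi$ up to algebraic equivalence and whose determinant-of-cohomology construction is canonical.

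The main obstacle is showing that $\Theta$ itself does not lift when $d$ is even. I would argue that a lift of $\Theta$ to a line bundle $M$ on $\mathcal J$ yields a section $\phi_M^{-1}$-twisted symmetric class. Concretely, $M\otimes[-1]^*M^{-1}\in H^0(B,\Pic^0_{\mathcal J/B})=0$, so $M$ is fiberwise symmetric. The symmetric translates of $\Theta$ correspond bijectively to theta characteristics, so $M$ would define a global section of the finite étale cover of theta characteristics over $B$. The monodromy of the universal plane curve of even degree acts transitively on the even and odd theta characteristics separately. This uses that the image of $\pi_1(B)$ in $\mathrm{Sp}(H^1(C,\mathbb Z/2))$ contains the subgroup preserving $\mathcal O_C((d-4)/2)$-type data; for $d$ even, the theta characteristics are not of the form $\mathcal O_C(k)$ since $d-3$ is odd. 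Since each orbit has more than one element, there is no global section, hence no lift of $\Theta$. As a consistency check, the obstruction lands in the $2$-torsion of $\Sha(\mathcal J)$, and the failure for even $d$ should be compatible with $\Br(B)=\mathbb Z/2$ from Theorem~\ref{thm: brauer P2}.
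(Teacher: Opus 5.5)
Your skeleton matches the paper's. You use the same exact sequence, the same vanishing $H^0(B,\Pic^0_{\mathcal J/B})=0$, and the global theta characteristic $\mathcal O_C(\tfrac{d-3}{2})$ for odd $d$. For even $d$ you give a canonical lift of $2\Theta$: the Poincar\'e-bundle construction, or $\mathcal O(\Theta_L+\Theta_{K_C\otimes L^{-1}})$, which is independent of $L$ by the theorem of the square. Your first option, translating by a global class of degree $g-1$, is unavailable, since that class is exactly what fails to exist for even $d$.

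You diverge at the decisive step, showing that $\Theta$ does not lift for even $d$, and as written this step has a hole. You correctly reduce to showing that $\pi_1(B)$ fixes no theta characteristic. However, the justification you give does not prove it. The sentence about the image of $\pi_1(B)$ containing ``the subgroup preserving $\mathcal O_C((d-4)/2)$-type data'' does not make sense as stated and gives no transitivity. The observation that no theta characteristic has the form $\mathcal O_C(k)$ only excludes the obvious candidates, not some other monodromy-invariant one. The missing input is Beauville's theorem (1986) on the monodromy of universal hypersurfaces: for even $d$ the monodromy of the universal plane curve on $H^1(C,\mathbb Z)$ is the full symplectic group. Its image $\mathrm{Sp}_{2g}(\mathbb F_2)$ has exactly two orbits on theta characteristics, the even and the odd ones, each of size $>1$ because $g\geq 3$. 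For odd $d$ the monodromy must fix $\mathcal O_C(\tfrac{d-3}{2})$, consistent with the lemma. With this citation your argument closes.

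The paper avoids monodromy here. It identifies $\delta(\Theta)$ with $[\Pic^{g-1}_{\mathcal C/B}]$ and rules out a section by a degree count. Since $\mathcal C\to\mathbb P^2$ is a projective bundle, line bundles on $\mathcal C_B$ are combinations of $h$ and classes from $B$, so their relative degrees lie in $d\mathbb Z$. But $g-1=\tfrac d2(d-3)\notin d\mathbb Z$ for even $d$. Meanwhile $2\Theta$ lifts because $K_C=\mathcal O_C(d-3)$ trivializes $\Pic^{2g-2}_{\mathcal C/B}$. Your $\mathcal O_C(k)$ remark is exactly this argument, minus one step: a global theta characteristic is a section of $\Pic^{g-1}_{\mathcal C/B}$, so it must come from a line bundle on $\mathcal C_B$. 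For even $d$ this needs $\Br(B)\to\Br(\mathcal C_B)$ to be injective, which is Proposition~\ref{prop: brauer of symm} with $n=1$. So the paper's route is elementary but relies on that Brauer input. Yours avoids it at the cost of a deep monodromy theorem, and it exhibits the conceptual reason for the failure: there is no invariant theta characteristic.

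Finally, your proof of $H^0(B,\Pic^0_{\mathcal J/B})=0$ hand-waves the non-torsion part (``I expect any section comes from\dots''), and Lang--N\'eron is not needed. Let $x$ be a section of $\Pic^0_{\mathcal C/B}$. Its obstruction to being a line bundle lies in $\Br(B)$, which is killed by $2$ by Theorem~\ref{thm: brauer P2}. Hence $2x$ is a degree-zero line bundle on $\mathcal C_B$, so by the Picard computation above it is pulled back from $B$, i.e.\ $2x=0$. Then $x\in J[2]^{\pi_1(B)}=0$ by your monodromy argument.
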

\begin{proof}
    Since there exist no non-trivial global relative line bundles of degree zero on $\mathcal C_B$, we have the exact sequence
    $$0 \to H^0(B, \Pic_{\mathcal{J}/B}) \to H^0(B,\NS_{\mathcal{J}/B}) \xrightarrow{\delta} H^1(B, \Pic^0_{\mathcal C / B}).$$
    Moreover, the morphism $\delta$ sends the theta divisor to the torsor $\Pic^{g-1}_{\mathcal C / B}$. This is trivial if and only if there exists a line bundle of relative degree $g-1$ on $\mathcal C / B$. Since $g - 1 = d(d-3)/2$, this is the case when $d$ is odd, while for even $d$ only $\mathrm{Pic}^{2g-2}_{\mathcal{C}/ B}$ is trivial. The claim follows.
\end{proof}

In particular, the map $\Pic(\mathcal{J}) \to H^0(B, \Pic_{\mathcal{J}/B})$ is surjective. From the Leray spectral sequence, we thus obtain the exact sequence
$$0 \to \Br(B) \to F^1\mathrm{Br}(\mathcal{J}) \to H^1(B, \Pic_{\mathcal{J}/B}) \to 0,$$
where exactness on the right is due to the existence of a section $B \to \mathcal{J}$.

\smallskip

We want to compute $\Br(\mathcal J)$. For this, we proceed by fixing $k$ large enough such that the Abel--Jacobi map
\[
\mathcal C^{(kd)}_B \to\mathcal J,\quad 
Z \mapsto \mathcal I_Z(k)
\]
is a Brauer--Severi variety. Let $\alpha \in \Br(\mathcal J)$ denote the corresponding Brauer class, we obtain the short exact sequence 
\begin{equation*}
    0 \to \langle \alpha \rangle \to \Br(\mathcal J) \to \Br(\mathcal C^{(kd)}_{B}) \to 0.
\end{equation*}
It remains to determine the period of $\alpha$, which is equal to the greatest common divisor of the relative degrees of line bundles on $\mathcal C^{(kd)}_B \to \mathcal J$, see e.g., \cite[Lemma~1.1]{Gounelas-Huybrechts}. Let us first describe the Picard group of $\mathcal C_B^{(kd)}.$
\begin{lem}\label{lem: desc pic}
    For any $k \geq 1$, the natural map
    $$\mathcal C^{(kd)}_B \to B \times (\mathbb P^2)^{[kd]}$$
    induces an isomorphism
    $$\Pic(B \times (\mathbb P^2)^{[kd]}) \simeq \Pic(\mathcal C^{(kd)}_B).$$ 
\end{lem}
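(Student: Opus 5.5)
The natural map $\mathcal C^{(kd)}_B \to B \times (\mathbb P^2)^{[kd]}$ is the restriction over $B$ of the evaluation map
\[
\mathcal C^{[kd]} \to (\mathbb P^2)^{[kd]}, \qquad (Z\subset C)\mapsto Z .
\]
Its fiber over $Z$ is the linear space $\mathbb P(H^0(\mathcal I_Z(d)))$. Over smooth curves, $\mathcal C^{[kd]}_B=\mathcal C^{(kd)}_B$. The plan is to realize $\mathcal C^{(kd)}_B$ as an open subset of a space with computable Picard group, compute that Picard group, and then show the boundary contributes no divisor classes.

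\emph{Step 1: Picard group of the total relative Hilbert scheme.} The incidence variety $\mathcal C^{[kd]} \subset |\mathcal O(d)|\times(\mathbb P^2)^{[kd]}$ is not a projective bundle over all of $(\mathbb P^2)^{[kd]}$ once $kd>d+1$, so I would not use $d$-very ampleness here. Instead I would use the first projection. $\mathcal C^{[kd]}\to|\mathcal O(d)|$ is flat with fibers the Hilbert schemes of points on plane curves; over $B$ these are the smooth, irreducible $C^{(kd)}$.

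Alternatively, and more robustly, argue directly on $\mathcal C^{(kd)}_B$ as in the proof of Lemma~\ref{lem: pic iso}. There is an exact sequence
\[
0\to \Pic(B)\to \Pic(\mathcal C^{(kd)}_B)\to \Pic(\mathcal C^{(kd)}_\eta)\to 0,
\]
and $\Pic(\mathcal C^{(kd)}_\eta)\hookrightarrow \Pic(\mathcal C^{(kd)}_{\bar\eta})^G$. The same sequence holds for $B\times(\mathbb P^2)^{[kd]}$, whose generic fiber has $\Pic = \Pic((\mathbb P^2)^{[kd]})=\mathbb Z H\oplus \mathbb Z\frac{E}{2}$. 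Here $\Pic(B)=\mathbb Z/\deg\Delta$ is torsion and matches on both sides. So it suffices to compare the generic fibers, i.e.\ to compute $\Pic(C^{(kd)})$ of a very general curve modulo monodromy.

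\emph{Step 2: monodromy invariants.} By Collino, $\Pic(C^{(n)}_{\bar\eta})$ sits in an extension involving $\mathbb Z\cdot x$ (the class of $C^{(n-1)}+p$), the class $\theta$ pulled back from $\Pic^n$, and $\Pic^0(J)$. The monodromy of the universal smooth plane curve acts on $H^1(C)$ through a big (full symplectic, by Beauville) group. Hence the $G$-invariants of $\Pic^0(J)\simeq J$ vanish up to torsion considerations, and the invariant part of $\NS(C^{(n)})$ is spanned by $x$ and $\theta$, using $\NS(J)=\mathbb Z\Theta$ as in Lemma~\ref{lem: desc ns}.

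The key point is the integrality question: which $\mathbb Q$-combinations of $x,\theta$ descend to actual line bundles on $\mathcal C^{(kd)}_\eta$? Here I would show that the restrictions of $H$ and $E/2$ give a basis. $H$ restricts to the divisor of $Z$ meeting a fixed line, which is $d x$-type plus corrections. $E/2$, the half boundary, restricts to the diagonal class $\delta/2 = (n+g-1)x-\theta$. Then I would check that any invariant class in the lattice spanned by $x,\theta$ that descends over $\eta$ lies in $\langle H|, E/2|\rangle$. A class $ax+b\theta$ with $a\not\equiv 0 \bmod d$ would give, by restriction to a fiber $C$ and Abel--Jacobi, a relative line bundle of degree prime to $d$ on $\mathcal C_B$. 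This is impossible because every line bundle on $\mathcal C_B$ has relative degree divisible by $d$: $\Pic(\mathcal C_B)$ is generated by $\mathcal O(1)$ and $\Pic(B)$, since $\mathcal C\to\mathbb P^2$ is a projective bundle.

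\emph{Main obstacle.} I expect the hard part to be this integrality/descent step: pinning down exactly the sublattice of $\langle x,\theta\rangle$ realized over $\eta$, including Brauer obstructions from the map to $\Br(k(\eta))$. I would first try to settle it by exhibiting the two explicit divisors above and bounding from the other side with the degree-divisibility argument, plus the analogous statement for $\theta$-coefficients obtained via Lemma~\ref{lem: desc picJ}.
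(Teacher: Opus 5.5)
You correctly reduce to the generic fiber, and you correctly compute the candidate lattice. In $\NS(C^{(kd)})=\mathbb Z x\oplus\mathbb Z\theta$, the restrictions of $h^{[kd]}$ and $\delta=E/2$ are $dx$ and $(kd+g-1)x-\theta$. The real content of the lemma, though, is the upper bound: nothing else descends to $\mathcal C^{(kd)}_\eta$. Here the proposal has a genuine gap.

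\emph{The mechanism is missing.} Restricting to a single fiber $C$ loses all global information, since on one $C^{(kd)}$ every class $ax+b\theta$ is realized. A $B$-morphism $\mathcal C_B\to\mathcal C^{(kd)}_B$, $p\mapsto p+D_0$, would need a relative effective divisor $D_0$ of degree $kd-1$ on $\mathcal C_B/B$. No such divisor exists, because all relative degrees there are multiples of $d$. The map $p\mapsto kd\cdot p$ does exist, but it pulls $ax+b\theta$ back to degree $kd\,a+(kd)^2g\,b$, which is automatically divisible by $d$, so it gives no constraint.

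\emph{The constraint you want is false for even $d$.} Since $g-1=d(d-3)/2\equiv d/2\pmod d$ when $d$ is even, the restriction of $\delta$ has $x$-coefficient $kd+g-1\equiv d/2\not\equiv 0\pmod d$. This restriction is a genuine line bundle on $\mathcal C^{(kd)}_B$. The true lattice is $\{ax+b\theta : a+(kd+g-1)b\equiv 0\pmod d\}$, which equals $\langle dx,\theta\rangle$ only for odd $d$.

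\emph{There is a circularity risk.} This lattice is exactly what determines the period of the Brauer--Severi class of $\mathcal C^{(kd)}_B\to\mathcal J$ for $k\gg0$. The paper deduces that period \emph{from} the present lemma (Lemma~\ref{lem: computing period}), so any degree-divisibility bound would need an independent source.

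\emph{The remaining terms are not controlled.} Big monodromy only shows that the $k(\eta)/\mathbb C$-trace of the Jacobian vanishes, so the Mordell--Weil group is finitely generated. It does not show that $\Pic^0(C^{(kd)}_{\bar\eta})^G=0$. The obstruction map $\Pic(C^{(kd)}_{\bar\eta})^G\to\Br(k(\eta))$ is also never addressed.

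The missing idea is to compute the Picard group exactly where it is computable and then transport it. For $k=1$, $\mathcal C^{[d]}\to(\mathbb P^2)^{[d]}$ is a projective bundle, because $\mathcal O(d)$ is $d$-very ample. Hence $\Pic(\mathcal C^{[d]})=\Pic((\mathbb P^2)^{[d]})\oplus\mathbb Z H$, and passing to the open subset over $B$ gives the case $k=1$. Your dismissal of very ampleness was premature: it fails for $kd>d+1$ but is exactly what is needed at $n=d$. For $k\ge 2$, the paper inducts along $i_L$. Lemma~\ref{lem: pic iso} shows that $i_L^*\colon\Pic(\mathcal C^{(kd)}_B)\to\Pic(\mathcal C^{((k-1)d)}_B)$ is an isomorphism. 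There the Hochschild--Serre sequences of the two generic fibers are compared \emph{with each other}, so the unknown Mordell--Weil and Brauer-obstruction terms are matched rather than computed. The explicit pullbacks $\varphi_L^*H=H$, $\varphi_L^*h^{[kd]}\equiv h^{[(k-1)d]}$ and $\varphi_L^*\delta\equiv\delta \bmod H$ then show that the corresponding map on $\Pic(B\times(\mathbb P^2)^{[\,\cdot\,]})$ is also an isomorphism.
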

\begin{proof}
    Since $\mathcal O_{\mathbb P^2}(d)$ is $d$-very ample, the map
    $$\mathcal C^{[d]} \to (\mathbb P^2)^{[d]}$$
    is a projective bundle, which implies the claim for $k = 1$. Now, assume $k \geq 2$ and proceed by induction. Fixing a line $L \subset \mathbb P^2$, we obtain a commutative diagram
 $$\begin{tikzcd}
 {\Pic(B \times (\mathbb P^2)^{[kd]})} \arrow[r] \arrow[d] & \Pic(\mathcal C_B^{(kd)}) \arrow[d] \\
 {\Pic(B \times (\mathbb P^2)^{[(k-1)d]})}  \arrow[r]                                                  & \Pic(\mathcal C_B^{((k-1)d)}),
 \end{tikzcd}$$
where the vertical maps are given by pullback along the (rational) map sending a subscheme $Z \subset \mathbb P^2$ of length $(k-1)d$ to the subscheme $Z \cup (C \cap L)$. This is well-defined away from the codimension two locus of subschemes with non-reduced structure on the line $L$. The right vertical map is an isomorphism by Lemma~\ref{lem: pic iso}. By induction, the lower horizontal map is an isomorphism. In order to conclude, we now show that the vertical map on the left is an isomorphism. By construction, it is the pullback induced by the rational morphism
\begin{align*}
\varphi_L \colon B \times (\mathbb P^2)^{[(k-1)d]} &\dashrightarrow B \times (\mathbb P^2)^{[kd]} \\
([C], Z) &\mapsto ([C], Z \cup (C \cap L)),
\end{align*}
well-defined whenever $Z$ is reduced in a neighborhood of the line $L$.
Note that we have
$$\Pic(B \times (\mathbb P^2)^{[n]}) \simeq \langle H\rangle \oplus \mathbb Z\cdot h^{[n]} \oplus \mathbb Z \cdot \delta \text{ \, for } n \geq 2,$$
where $H,\, h^{[n]},\, 2\delta$ denote respectively the pullback of the restriction of the ample generator on $|\mathcal O_{\mathbb P^2}(d)|$ to $B$, the symmetrization of the ample generator $h \in \Pic(\mathbb P^2)$, and the class of the exceptional divisor of the Hilbert--Chow map.

Since the map $\varphi_L$ commutes with the projections to $B$, we have $\varphi^*_L H = H$. To compute the pullback of $h^{[kd]}$, fix a general line $L'$. Then $h^{[kd]}$ is the class of the locus of subschemes of $\mathbb P^2$ that intersect $L'$ non-trivially. Thus, $\varphi^*_L h^{[kd]}$ is represented by the locus of pairs $([C], Z) \in B \times (\mathbb P^2)^{[(k-1)d]}$ such that $Z$ intersects $L'$ non-trivially, or that $C$ contains the intersection point $L \cap L'$. It follows that 
\[
\varphi^*_L h^{[kd]} \equiv h^{[(k-1)d]} \mod H.
\]
Similarly, we have that $\varphi^*_L (2\delta)$ is represented by the locus of pairs $([C], Z) \in B \times (\mathbb P^2)^{[(k-1)d]}$ such that $Z$ is non-reduced or $L$ is tangent to $C$. Therefore, we have $\varphi_L^* \delta \equiv \delta \mod H.$ The claim follows.
\end{proof}

\begin{lem}\label{lem: computing period}
    Let $\alpha \in \Br(\mathcal J)$ denote the Brauer class induced by $\mathcal C^{(kd)}_B \to \mathcal J$ for $k\gg 0$. Then we have
    $$\operatorname{per}(\alpha) = \begin{cases}d &\text{ if } d \text{ is odd,} \\d/2 & \text{ if } d \text{ is even}.\end{cases}$$
\end{lem}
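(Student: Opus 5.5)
By \cite[Lemma~1.1]{Gounelas-Huybrechts}, $\operatorname{per}(\alpha)$ is the greatest common divisor of the degrees of line bundles on $\mathcal C^{(kd)}_B$ restricted to a fiber $\mathbb P^N$ of the Abel--Jacobi map. By Lemma~\ref{lem: desc pic}, $\Pic(\mathcal C^{(kd)}_B)$ is generated by the pullbacks of $\Pic(B)$ (which contains $H$), $h^{[kd]}$ and $\delta$. So it suffices to compute the degrees of $H$, $h^{[kd]}$ and $\delta$ on a line in a fiber.

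Fix a smooth curve $[C]\in B$ and a line bundle $N$ of degree $kd$ on $C$. The fiber of the Abel--Jacobi map is $|N|\simeq\mathbb P^N$, and for $k\gg0$ the line bundle $N$ is very ample and non-special. Take a general pencil $\ell\subset|N|$; it is base-point free, and we compute three degrees.

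\emph{Degree of $H$.} All points of $\ell$ lie over the single point $[C]\in B$, so every class pulled back from $\Pic(B)$ restricts trivially. Hence $\deg H|_\ell=0$.

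\emph{Degree of $h^{[kd]}$.} Represent $h^{[kd]}$ by the locus of subschemes meeting a general line $L'$. The line $L'$ meets $C$ transversally in $d$ points. Each of these points lies in exactly one divisor of the base-point free pencil $\ell$, and for general choices these divisors are distinct and the intersections transverse. Hence $\deg h^{[kd]}|_\ell=d$.

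\emph{Degree of $\delta$.} The class $2\delta$ is represented by the non-reduced subschemes. On $\ell$ these are the members of the pencil containing a ramification point of the degree $kd$ map $C\to\mathbb P^1$ defined by $\ell$. For a general pencil the ramification is simple, so Riemann--Hurwitz gives
\[
\deg 2\delta|_\ell=2g-2+2kd=d(d-3)+2kd,\qquad\text{hence}\qquad \deg\delta|_\ell=\tfrac{d(d-3)}{2}+kd.
\]
The main point to check here is that the Cartier divisor $2\delta$ meets $\ell$ with multiplicity one at each simple ramification point. I would verify this in a local model: near a simply ramified member the exceptional divisor of the Hilbert--Chow morphism is given by the discriminant of a quadratic, which is transverse to the pencil direction.

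Combining these three degrees,
\[
\operatorname{per}(\alpha)=\gcd\Big(d,\ \tfrac{d(d-3)}{2}+kd\Big)=\gcd\Big(d,\ \tfrac{d(d-3)}{2}\Big).
\]
If $d$ is odd, then $d-3$ is even, so $\tfrac{d(d-3)}{2}$ is a multiple of $d$ and the gcd is $d$. If $d$ is even, then $\tfrac{d(d-3)}{2}=\tfrac d2(d-3)$ with $d-3$ odd, so the gcd is $d/2$. This gives the claimed formula.
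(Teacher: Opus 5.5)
Your proposal is correct and follows essentially the same route as the paper. Both arguments take the gcd of the fiber degrees of the generators from Lemma~\ref{lem: desc pic}, namely $\Pic(B)$, $h^{(kd)}$ and $\delta$, which are $0$, $d$ and $d(2k+d-3)/2$. The only differences are cosmetic: you work in an arbitrary fiber $|N|$ and derive the discriminant degree via Riemann--Hurwitz, with a transversality check. The paper instead uses the fiber $|\mathcal O_C(k)|$ over the zero section and quotes the classical degree $d(2k+d-3)$.
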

\begin{proof}
 Recall that the period is equal to the greatest common divisor of the relative degrees of line bundles on $\mathcal C_B^{(kd)}$. 
 By Lemma~\ref{lem: desc pic}, we have 
 \[
 \Pic(\mathcal C_B^{(kd)}) \simeq \Pic(B) \oplus \mathbb Z h^{(kd)} \oplus\delta,
 \]
    where $2\delta$ is the class of the exceptional divisor and $h^{(kd)}$ is described as follows: fixing a general line $l \subset \mathbb P^2$, the corresponding divisor $h^{(kd)}$ is represented over $[C]\in B$ by the locus of subschemes of $C$ that meet $l \cap C$. Clearly, the relative degree of any class pulled back from $\Pic(B)$ is zero. The fiber of $\mathcal C_B^{(kd)} \to \mathcal{J}$ over a point of the form $(C, \mathcal O_C) \in \mathcal{J}$ is given by $\mathbb P(H^0(C, \mathcal O(k)))$.
    Taking a general pencil of degree $k$ plane curves and a given line $l\subset \mathbb P^2$, there are exactly $d$ elements that meet $l\cap C$. Thus $h^{(kd)}$ is of relative degree $d$.
    
    Let us now compute the relative degree of the exceptional divisor, which corresponds to $2\delta$. In $\mathbb P(H^0(C, \mathcal O(k)))$ this is precisely the discriminant locus, which is classically known to be of degree $d(2k+d-3).$ Thus $\delta$ is of degree $d(2k+d-3)/2$. The claim follows.
\end{proof}

Now we can prove Theorem~\ref{thm: tate-shafarevich} in the $d\geq 4$ case, whose statement we recall below.

\begin{thm}\label{thm: tate-shafarevich main text}
    For $d\geq 4$, the Tate--Shafarevich group of the relative Jacobian of the universal smooth plane curve of degree $d$ is generated by $\Pic^1_{\mathcal C / B}$, i.e.
    $$\Sha(\mathcal{J}) \simeq \mathbb Z / d \mathbb Z \cdot [\Pic^1_{\mathcal C / B}].$$
\end{thm}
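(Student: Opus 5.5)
We compute $\Br(\mathcal J)$, and through it $F^1\Br(\mathcal J)$, and then read off $\Sha(\mathcal J)=H^1(B,\Pic^0_{\mathcal J/B})$ from the sequences \eqref{eq: leray intro} and \eqref{eq: exponential intro}. Here we identify $\Pic^0_{\mathcal J/B}\simeq\Pic^0_{\mathcal C/B}$ via the principal polarization.

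\emph{Step 1.} Fix $k\gg0$. From the exact sequence $0\to\langle\alpha\rangle\to\Br(\mathcal J)\to\Br(\mathcal C_B^{(kd)})\to 0$, Corollary~\ref{cor: brauer of symm} and Lemma~\ref{lem: computing period}, we get $|\Br(\mathcal J)|=d$ in both parities. For $d$ odd this is $d\cdot 1$. For $d$ even it is $(d/2)\cdot 2$.

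\emph{Step 2.} The composition $\Br(B)\to\Br(\mathcal J)\to\Br(\mathcal C_B^{(kd)})$ is the pullback along $\mathcal C_B^{(kd)}\to B$. Via Proposition~\ref{prop: brauer isom} it agrees with the pullback to $\mathcal C_B^{(d)}$. Proposition~\ref{prop: brauer of symm} says this pullback is an isomorphism, so $\Br(B)\hookrightarrow\Br(\mathcal J)$ splits the surjection onto $\Br(\mathcal C_B^{(kd)})$.

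By Lemma~\ref{lem: desc ns}, $H^0(B,R^2f_*\mathbb G_m)$ is detected on the generic fiber. The class $\alpha$ restricts to zero on a closed fiber $J(C)$, since $C^{(kd)}\to J(C)$ is a projectivization of a vector bundle. We also expect the image of $\Br(\mathcal J)$ in $H^0(B,R^2f_*\mathbb G_m)$ to be torsion of a type that vanishes here: $H^2(J(C),\mathbb G_m)$ is divisible modulo $\NS$, and monodromy-invariant classes are killed because the Leray filtration splits via the zero section. From this we aim to conclude $F^1\Br(\mathcal J)=\Br(\mathcal J)$, and hence
\[
H^1(B,\Pic_{\mathcal J/B})\simeq\Br(\mathcal J)/\Br(B)\simeq\langle\alpha\rangle\simeq\mathbb Z/\operatorname{per}(\alpha).
\]
This equality $F^1=\Br$, or a substitute argument bounding the contribution of $H^0(B,R^2f_*\mathbb G_m)$, is the step I expect to be the main obstacle. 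A possible fallback is to compare with the Leray filtration on $\mathcal C_B^{(kd)}$, where the $\Br$ of the base already accounts for everything by Corollary~\ref{cor: brauer of symm}.

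\emph{Step 3.} Use the exponential sequence \eqref{eq: exponential intro}:
\[
H^0(B,\Pic_{\mathcal J/B})\to H^0(B,\NS_{\mathcal J/B})\xrightarrow{\delta}\Sha(\mathcal J)\to H^1(B,\Pic_{\mathcal J/B})\to H^1(B,\NS_{\mathcal J/B}).
\]
Since $\NS_{\mathcal J/B}=\mathbb Z$ is constant (Lemma~\ref{lem: desc ns}), we have $H^1(B,\mathbb Z)=\operatorname{Hom}(\pi_1(B),\mathbb Z)=0$, because $H_1(B,\mathbb Z)$ is finite: $\Delta$ is an irreducible divisor of degree $3(d-1)^2$. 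Lemma~\ref{lem: desc picJ} gives that $\operatorname{coker}\delta$ is $0$ for $d$ odd and $\mathbb Z/2$ for $d$ even, with image generated by $[\Pic^{g-1}_{\mathcal C/B}]$. Therefore $|\Sha(\mathcal J)|=d$ in both cases.

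\emph{Step 4.} It remains to show $[\Pic^1_{\mathcal C/B}]$ has order exactly $d$. We have $\Pic^d_{\mathcal C/B}$ trivial via $\mathcal O(1)|_C$, so the order divides $d$. Conversely, if $\Pic^e_{\mathcal C/B}$ has a section with $e\mid d$ and $e<d$, then it yields, by the Abel--Jacobi construction, a relative line bundle of degree $e$ modulo $d$ on $\mathcal C_B$. This contradicts the fact that every line bundle on $\mathcal C_B$ comes from $B\times\mathbb P^2$, which is the $n=1$ case of the projective bundle $\mathcal C^{[1]}\to\mathbb P^2$, hence has fiber degree divisible by $d$. Lifting a section of $\Pic^e$ to an actual line bundle uses that the obstruction lies in $\Br(B)$, which is $2$-torsion. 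This gives the order $d$ (for $d$ even, the order is at least $d/2$, and the parity is handled via $g-1$). So the cyclic subgroup generated by $[\Pic^1_{\mathcal C/B}]$ has order $d=|\Sha(\mathcal J)|$ and equals $\Sha(\mathcal J)$.
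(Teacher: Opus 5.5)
Steps~1, 3 and 4 follow the paper's proof. Step~1 gets $|\Br(\mathcal J)|=d$ from Corollary~\ref{cor: brauer of symm} and Lemma~\ref{lem: computing period}. Step~3 uses the exponential sequence with $H^1(B,\NS_{\mathcal J/B})=H^1(B,\mathbb Z)=0$ and Lemma~\ref{lem: desc picJ}. Step~4 gives the lower bound from $[\Pic^1_{\mathcal C/B}]$. Your splitting $\Br(\mathcal J)=\langle\alpha\rangle\oplus\Br(B)$ in Step~2 is also correct.

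\textbf{The gap.} You have not proved the claim $F^1\Br(\mathcal J)=\Br(\mathcal J)$ in Step~2, and your sketch does not work. Vanishing of $\alpha$ on the closed fibres $J(C)$ does not by itself give vanishing in the stalks of $R^2f_*\mathbb G_m$. Those stalks are Brauer groups of $\mathcal J$ over strict henselizations of $B$, and restriction to the closed fibre need not be injective when the N\'eron--Severi group jumps. The zero section only splits off the row $E_2^{p,0}=H^p(B,\mathbb G_m)$ of the Leray spectral sequence; it says nothing about $E_2^{0,2}$.

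\textbf{You do not need the equality.} The zero section kills the differential into $H^3(B,\mathbb G_m)$, so $0\to\Br(B)\to F^1\Br(\mathcal J)\to H^1(B,\Pic_{\mathcal J/B})\to 0$ is exact. The trivial inclusion $F^1\Br(\mathcal J)\subseteq\Br(\mathcal J)$ then gives $|H^1(B,\Pic_{\mathcal J/B})|\le|\Br(\mathcal J)/\Br(B)|=\operatorname{per}(\alpha)$. Step~3 then yields $|\Sha(\mathcal J)|\le d$, and Step~4 supplies the matching lower bound. This is exactly how the paper concludes.

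If you want the equality anyway, argue locally. \'Etale locally on $B$ the smooth family $\mathcal C_B\to B$ has a section, hence a Poincar\'e bundle. So $\mathcal C_B^{(kd)}\to\mathcal J$ is locally over $B$ the projectivization of a vector bundle, and $\alpha$ dies in every stalk of $R^2f_*\mathbb G_m$. Combined with your splitting, this gives $F^1\Br(\mathcal J)=\Br(\mathcal J)$.

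\textbf{Step~4.} Your argument is correct given Lemma~\ref{lem: desc picJ}. Since $g-1=(d-3)\cdot\tfrac d2$ with $d-3$ odd, $[\Pic^{d/2}_{\mathcal C/B}]=0$ would force $\delta(\Theta)=[\Pic^{g-1}_{\mathcal C/B}]=0$, a contradiction. The factor of $2$ you lose can be avoided, and the same point is what that lemma tacitly uses. By the Leray sequence for $\mathcal C_B\to B$, the obstruction to lifting a section of $\Pic_{\mathcal C/B}$ to a line bundle lies in $\ker(\Br(B)\to\Br(\mathcal C_B))$. This kernel is zero by Proposition~\ref{prop: brauer of symm} with $n=1$. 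Hence every section of $\Pic^e_{\mathcal C/B}$ comes from an honest line bundle on $\mathcal C_B$. Its fibre degree is divisible by $d$, because $\mathcal C\to\mathbb P^2$ is a projective bundle. This directly justifies the paper's statement that $\Pic^k_{\mathcal C/B}$ is trivial iff there is a line bundle of relative degree $k$.
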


\begin{proof}
Since $\Pic^k_{\mathcal C / B}$ is the trivial torsor if and only if there is a line bundle of relative degree $k$ on $\mathcal C/B$, we have $\mathbb Z / d \mathbb Z \subset \Sha(\mathcal{J})$, generated by $\Pic^1_{\mathcal C / B}$. On the other hand, we have the exact sequences
$$0 \to H^0(B, \NS_{\mathcal{J} / B}) / H^0(B, \Pic_{\mathcal{J}/B})\to \Sha(\mathcal{J}) \to H^1(B, \Pic_{\mathcal{J} / B}) \to 0,$$
where exactness on the right stems from the vanishing
$H^1(B, \NS_{\mathcal J / B}) \simeq H^1_{\text{\'et}}(B, \mathbb Z) = 0$ due to Lemma \ref{lem: desc ns} (note that we are working with \'etale cohomology groups),
and
$$0 \to \Br(B) \to F^1 \mathrm{Br}(\mathcal{J}) \to H^1(B, \Pic_{\mathcal{J} / B}) \to 0$$
established earlier.
Combining Corollary~\ref{cor: brauer of symm}, Lemma~\ref{lem: desc ns}, Lemma~\ref{lem: desc picJ} and Lemma~\ref{lem: computing period}, we have the following:
if $d$ is odd, then
\[
H^0(B, \NS_{\mathcal{J} / B}) / H^0(B, \Pic_{\mathcal{J}/B}) = 0, \quad |\Br(\mathcal{J}) / \Br(B)| = d.
\]
If $d$ is even, then
\[
H^0(B, \NS_{\mathcal{J} / B}) / H^0(B, \Pic_{\mathcal{J}/B}) = \mathbb Z / 2 \mathbb Z, \quad |\Br(\mathcal{J}) / \Br(B)| = d/2.
\]
In both cases we conclude that $|\Sha(\mathcal{J})| \leq d$, which forces the claimed isomorphism to hold.
\end{proof}

\section{Computations for plane cubic curves}
\label{sec: appendix cubics}

We compute Tate--Shafarevich group for the relative Jacobian of plane cubic curves in this section. In particular, we complete the proof of Theorem~\ref{thm: tate-shafarevich}.

\begin{prop}\label{prop: appendix}
    Let $\mathcal C_B \to B \subset |\mathcal O_{\mathbb P^2}(3)|$ denote the universal smooth plane cubic. Then
    $$\Br(\mathcal C^{(3)}_B) \simeq \mathbb Z/ 2 \mathbb Z.$$
\end{prop}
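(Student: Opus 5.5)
We run the argument of Proposition~\ref{prop: brauer of symm} with $d=n=3$, keeping track of the extra component that appears for cubics.

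\textbf{Reduction to Borel--Moore homology.} Since $\mathcal O_{\mathbb P^2}(3)$ is $3$-very ample, the evaluation map $\mathcal C^{[3]}\to(\mathbb P^2)^{[3]}$ is a projective bundle. Hence $\mathcal C^{[3]}$ is smooth, satisfies $H^3(\mathcal C^{[3]},\mathbb Z/m)=0$, and has trivial Brauer group. Lemma~\ref{lem:brauer_bm} then gives
\[
\Br(\mathcal C^{(3)}_B)[m]\simeq H_{\mathrm{top}-1}(\mathcal C^{[3]}_\Delta,\mathbb Z/m).
\]
The excision argument with the open set $W$ (the case $n=d$, where $Z\cup p$ is not contained in a line) goes through unchanged. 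It shows $H_{\mathrm{top}-1}(\mathcal C^{[3]}_\Sigma)=0$ and identifies $\Br[m]$ with the kernel of
\[
\iota\colon H_{\mathrm{top}}(\mathcal C^{[3]}_{\Sigma_{\geq2}})\to H_{\mathrm{top}-2}(\mathcal C^{[3]}_\Sigma)\oplus H_{\mathrm{top}}(\mathcal C^{[3]}_{\Delta_{\geq2}}).
\]

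\textbf{Irreducible components.} For cubics, a general point of $\Delta_{\geq2}$ is a conic union a secant line, a curve with exactly two nodes. Over such a curve $C=Q\cup \ell$, the Hilbert scheme $C^{[3]}$ has components indexed by how the three points split: $Q^{(3)}$, $Q^{(2)}\times\ell$, $Q\times\ell^{(2)}$, and $\ell^{(3)}$. The last one lies in the non-$W$ locus, and a dimension count should show it does not give a top-dimensional component of $\mathcal C^{[3]}_{\Delta_{\geq 2}}$. So the top-dimensional components of $\mathcal C^{[3]}_{\Delta_{\geq2}}$ are expected to be three loci $A_{3,0},A_{2,1},A_{1,2}$, of the same dimension, where $A_{i,j}$ means $i$ points on the conic and $j$ on the line. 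Over them lie the components $\tilde A_{i,j}$ of $\mathcal C^{[3]}_{\Sigma_{\geq2}}$. The ordered choice of which node is the marked singular point does not create a further splitting, because the monodromy of $Q\cap\ell$ exchanges the two nodes; one should check that each $\tilde A_{i,j}$ is irreducible.

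\textbf{Computing the kernel.} We have $\iota(\tilde A_{i,j})=(\,[\tilde A_{i,j}]\in H^2(\mathcal C^{[3]}_\Sigma),\ 2A_{i,j}\,)$. The second component therefore vanishes mod $2$ and is injective for odd $m$. So the kernel is $2$-torsion, and it is determined by the $\mathbb Z/2$-linear relations among the classes $[\tilde A_{i,j}]$ in $H^2(g^{-1}(W),\mathbb Z/2)\cong \mathbb Z/2\cdot H\oplus H^2((\mathbb P^2)^{[3]}\times\mathbb P^2,\mathbb Z/2)$.

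For each $\tilde A_{i,j}$ we compute two invariants.

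\emph{Fibre degree.} Fix $(Z,p)\in W$. The fibre of $g$ over it is the linear system of cubics through $Z$ singular at $p$, a $\mathbb P^3$. Count the members of a general pencil in this $\mathbb P^3$ that split as a conic plus a line in the pattern $(i,j)$, with $p$ one of the two nodes. A Lemma~\ref{lemma: relative degree}-style count gives total degree $e(\mathbb P^2)+9+1\equiv 1 \pmod 2$. This total must then be split among the three components.

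\emph{Base coefficients.} Obtain the coefficients of $h$, the half-boundary class $\delta$, and $h^{[3]}$ by intersecting with test curves in $(\mathbb P^2)^{[3]}\times\mathbb P^2$, for instance moving $p$ along a line, or moving one point of $Z$ along a line.

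The statement $\Br\simeq\mathbb Z/2$ amounts to showing that exactly one nontrivial combination $\sum\epsilon_{i,j}[\tilde A_{i,j}]$ vanishes mod $2$, for example $\tilde A_{2,1}+\tilde A_{1,2}$, or the sum of all three. Equivalently, the $3\times(\text{rank})$ matrix of classes over $\mathbb F_2$ has rank $2$. The top-dimensional part of $H_{\mathrm{top}}(\mathcal C^{[3]}_{\Sigma_{\geq 2}})$ is $(\mathbb Z/m)^3$, so the kernel is then $\mathbb Z/2$ for every even $m$.

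\textbf{Main obstacle.} The delicate step is the enumerative computation of these classes, especially their $\delta$- and $h^{[3]}$-coefficients. The most concrete approach is to restrict to explicit test curves where the split cubics can be listed by hand. A second delicate point is ruling out that a degenerate component, such as the triangle locus $\Delta_{\geq 3}$ or the component $\ell^{(3)}$, contributes in top degree.
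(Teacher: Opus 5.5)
Your proposal has a genuine gap: the computation that decides the answer is announced but never done, and the setup you give for it is wrong in two places.

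\textbf{What is actually proved.} You establish the Borel--Moore reduction and the fact that $\mathcal C^{[3]}_{\Sigma_{\geq 2}}\to\mathcal C^{[3]}_{\Delta_{\geq 2}}$ is generically $2$-to-$1$ on each component. That gives only $2\cdot\Br(\mathcal C^{(3)}_B)=0$, which is Lemma~\ref{lem: brauersymm2}. It proves neither $\Br\neq 0$ nor $|\Br|\le 2$.

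\textbf{First error: there are four components, not three.} The component over $\ell^{(3)}$ is top-dimensional: $\dim A_{0,3}=\dim\Delta_{\geq2}+3=7+3=10$, the same as the other three. So there are four components $\tilde A_{c,l}$ with $c+l=3$, as in the paper, and $H_{\mathrm{top}}(\mathcal C^{[3]}_{\Sigma_{\geq 2}},\mathbb Z/m)\simeq(\mathbb Z/m)^4$. Lying over $W^c$ does not lower its dimension, because for $d=3$ the fibres of $g$ jump there. If $Z\cup p\subset\ell$, every cubic through $Z$ singular at $p$ contains $\ell$. Hence $g^{-1}(Z,p)$ is the $\mathbb P^4$ of curves $\ell\cdot Q$ with $Q\ni p$, not a $\mathbb P^3$, and $g^{-1}(W^c)$ is a divisor in $\mathcal C^{[3]}_\Sigma$.

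\textbf{Second error: relations cannot be tested on $g^{-1}(W)$.} Because $g^{-1}(W^c)$ is a divisor, relations among the component classes are not detected in $H^2(g^{-1}(W),\mathbb Z/2)$. Consider the excision sequence for $g^{-1}(W)\subset\mathcal C^{[3]}_\Sigma$, using $H^{\BM}_{\mathrm{top}-1}(g^{-1}(W))\simeq H^1(g^{-1}(W))=0$. It shows that $H_{\mathrm{top}-2}(g^{-1}(W^c))$, which is free on the $10$-dimensional components of $g^{-1}(W^c)$, injects into $H_{\mathrm{top}-2}(\mathcal C^{[3]}_\Sigma)$. In particular $[\tilde A_{0,3}]\neq 0$ in $H_{\mathrm{top}-2}(\mathcal C^{[3]}_\Sigma)$, although it restricts to zero on $g^{-1}(W)$.

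So restricting to $g^{-1}(W)$ only bounds $\ker\iota$ from above. To produce the nonzero Brauer class you must show that some combination vanishes including its part supported on $g^{-1}(W^c)$. Your plan does not address this, and the linear algebra you set up (three classes, rank $2$) is not the right one. For orientation: on a fibre $\mathbb P^3$ of $g$ over a general point of $W$, the four components cut out a quadric, three planes, and two empty sets, of degrees $2,3,0,0$. Hence any kernel element avoids $\tilde A_{2,1}$, and everything else depends on exactly the base and $W^c$ contributions you leave open.

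\textbf{How the paper avoids this.} The paper deliberately sidesteps the component analysis in three steps.
\begin{enumerate}
\item It computes $\Br(\mathcal C_B)\simeq\mathbb Z/2$ (Lemma~\ref{lem: appendixc0}) on an explicit smooth resolution $\widetilde{\mathcal C}_\Delta$. This is a $\mathbb P^5$-bundle over $\Bl_{\mathbf D}(\mathbb P^2\times\mathbb P^2)$ with $H^2$ spanned by $H,h_p,h_q,E$, and $[\mathcal Q]$, $[\mathcal L]$ are fixed by four test pencils.
\item It identifies $\Br(\mathcal C^{(3)}_B)\simeq H^1(B,\Pic_{\mathcal C/B})$. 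It compares $0\to\mathbb Z/3\to\Br(\mathcal J)\to\Br(\mathcal C^{(3)}_B)\to0$, where the Brauer--Severi class has period $3$, with the sequence induced by $0\to\Pic^0_{\mathcal C/B}\to\Pic_{\mathcal C/B}\to\mathbb Z\to0$, using the $2$-torsion bound.
\item A Leray comparison between $\mathcal C_B$ and $\mathcal C^{(3)}_B$, which has a section, shows that $\Br(\mathcal C_B)\to H^1(B,\Pic_{\mathcal C/B})$ is an isomorphism.
\end{enumerate}
To keep your direct route, you would need a full description of $H_{\mathrm{top}-2}(\mathcal C^{[3]}_\Sigma,\mathbb Z/2)$, including the divisorial strata over $W^c$, and then the actual computation of all four classes.
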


We first show that this Brauer group is $2$-torsion by continuing the strategy in the proof of Proposition~\ref{prop: brauer of symm}.
\begin{lem}\label{lem: brauersymm2}
    Let $\mathcal C_B \to B \subset |\mathcal O_{\mathbb P^2}(3)|$ denote the universal smooth plane cubic. Then
    $$2\cdot\Br(\mathcal C^{(3)}_B) = 0.$$
\end{lem}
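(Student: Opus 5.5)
We run the argument of Proposition~\ref{prop: brauer of symm} for $d=n=3$ up to the point where irreducibility was used. We only need that it bounds $\Br$ by a kernel generated by fundamental classes of the components of $\mathcal C^{[3]}_{\Sigma_{\geq 2}}$, and then compute that kernel by hand.

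\textbf{Reduction.} Since $\mathcal O_{\mathbb P^2}(3)$ is $3$-very ample, $\mathcal C^{[3]}\to(\mathbb P^2)^{[3]}$ is a projective bundle. Hence $H^3(\mathcal C^{[3]},\mathbb Z/m\mathbb Z)=0$ and $\Br(\mathcal C^{[3]})=0$, and Lemma~\ref{lem:brauer_bm} gives
\[
\Br(\mathcal C^{(3)}_B)[m]\simeq H_{\mathrm{top}-1}(\mathcal C^{[3]}_\Delta,\mathbb Z/m\mathbb Z).
\]
The excision argument over the open set $W$ goes through unchanged; for $n=d=3$ one removes the triples collinear with $p$, which is still of codimension $\geq 2$. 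Together with the fact that $\Sigma\to\Delta$ is a homeomorphism over $\Delta_1$, this identifies $\Br(\mathcal C^{(3)}_B)[m]$ with the kernel of
\[
\iota\colon H_{\mathrm{top}}(\mathcal C^{[3]}_{\Sigma_{\geq 2}})\to H_{\mathrm{top}-2}(\mathcal C^{[3]}_\Sigma)\oplus H_{\mathrm{top}}(\mathcal C^{[3]}_{\Delta_{\geq 2}}).
\]

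\textbf{Components.} For cubics, $\Delta_{\geq 2}$ is the locus of reducible cubics $\ell\cup Q$, a conic plus a line. It is irreducible of codimension $2$ in $|\mathcal O(3)|$, and its general member has exactly two nodes. Therefore $\Sigma_{\geq 2}$ is irreducible: the monodromy of the two points of $\ell\cap Q$ is the full symmetric group. The relative Hilbert scheme $\mathcal C^{[3]}_{\Delta_{\geq 2}}$, however, has four top-dimensional components $D_{a,b}$ with $a+b=3$, according to how many of the three points lie on $\ell$ and how many on $Q$. Here $D_{3,0}$ is excluded or lower dimensional. The analogous decomposition gives the components $E_{a,b}$ of $\mathcal C^{[3]}_{\Sigma_{\geq 2}}$, and $E_{a,b}\to D_{a,b}$ is generically $2\colon 1$ because a general member has two nodes. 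So $H_{\mathrm{top}}$ of source and target are free $\mathbb Z/m$-modules on these components. The second coordinate of $\iota$ sends $[E_{a,b}]\mapsto 2[D_{a,b}]$. Consequently any element $\sum c_{a,b}[E_{a,b}]$ of $\ker\iota$ satisfies $2c_{a,b}=0$ for all $a,b$, so $2x=0$ for every $x\in\ker\iota$.

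\textbf{Conclusion.} Hence $\Br(\mathcal C^{(3)}_B)[m]$ is killed by $2$ for every $m$. Since the Brauer group is torsion, $2\cdot\Br(\mathcal C^{(3)}_B)=0$. The first component of $\iota$ is not needed for this bound; it matters only for determining the kernel exactly, which is the content of the subsequent step toward Proposition~\ref{prop: appendix}.

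\textbf{Main obstacle.} The delicate point is the component count. We must verify that each top-dimensional component of $\mathcal C^{[3]}_{\Sigma_{\geq 2}}$ maps generically finitely of degree exactly $2$ onto a top-dimensional component of $\mathcal C^{[3]}_{\Delta_{\geq 2}}$. We must also check that no top-dimensional component lies over the lower strata, namely non-reduced points, points at the nodes, $\Delta_\infty$, or triangles $\Delta_3$. The plan is to handle this by dimension counts using the flatness of $\mathcal C^{[3]}\to|\mathcal O(3)|$ cited above. By flatness every component of $\mathcal C^{[3]}_T$ dominates $T$, so components over the closed subset $\Delta_3$ of $\Delta_{\geq 2}$ have smaller dimension.
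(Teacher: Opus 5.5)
Your proposal is correct and follows the paper's proof. You reduce via Lemma~\ref{lem:brauer_bm} and the excision over $W$ to $\ker\iota$, then use only its second coordinate: each of the four top-dimensional components of $\mathcal C^{[3]}_{\Sigma_{\geq 2}}$ maps generically $2\colon 1$ onto the corresponding component of $\mathcal C^{[3]}_{\Delta_{\geq 2}}$, so the kernel is $(\mathbb Z/\gcd(2,m)\mathbb Z)^{\oplus 4}$.

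One slip: $D_{3,0}$ is neither excluded nor lower-dimensional. Its fiber $\ell^{(3)}\cong\mathbb P^3$ is a $3$-dimensional component of $C^{[3]}$, and the collinearity condition only removes $E_{3,0}$ from $g^{-1}(W)$, not from $\mathcal C^{[3]}_{\Sigma_{\geq 2}}$. So there really are four components, as in the paper, but your argument treats all components uniformly and the conclusion is unaffected.
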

\begin{proof}
In the $d = 3$ case, the general member of $\Delta_{\geq 2}$ is the union of a smooth conic and a line, intersecting transversally in $2$ points. The relative Hilbert scheme $\mathcal C^{[3]}_{\Sigma_{\geq 2}}$ thus consists of the four components
$$\mathcal C^{[c, l]}_{\Sigma_{\geq 2}} \text{\;\, for \,} c  + l = 3,$$
where $c$ denotes the number of points on the conic and $l$ denotes the number of points on the line (generically). 
Recall that we have 
$$\Br(\mathcal C^{(3)}_B)[n] \simeq \ker(H_{\mathrm{top}}(\mathcal C^{[3]}_{\Sigma_{\geq 2}}, \mathbb Z / n\mathbb Z) \to H_{\mathrm{top}-2}(\mathcal C^{[3]}_{\Sigma}, \mathbb Z / n \mathbb Z) \oplus H_{\mathrm{top}}(\mathcal C_{\Delta_{\geq 2}}^{[3]}, \mathbb Z / n \mathbb Z)).$$
Since the map $\mathcal C^{[3]}_{\Sigma_{\geq 2}} \to \mathcal C^{[3]}_{\Delta_{\geq 2}}$ is generically $2$-to-$1$ on each irreducible component, we have
$$\ker(H_{\mathrm{top}}(\mathcal C^{[3]}_{\Sigma_{\geq 2}}, \mathbb Z / n\mathbb Z) \to H_{\mathrm{top}}(\mathcal C_{\Delta_{\geq 2}}^{[3]}, \mathbb Z / n \mathbb Z)) \simeq (\mathbb Z / \gcd(2, n))^{\oplus 4}$$
and thus $\Br(\mathcal C^{(3)}_B) \subset (\mathbb Z / 2 \mathbb Z)^{\oplus 4}$. The claim follows.
\end{proof}

In order to get a more precise description of $\Br(\mathcal C^{(3)}_B)$ via the above strategy, one would have to analyze the irreducible components of $\mathcal C^{[3]}_{\Sigma_{\geq 2}}$. 
Instead of this, we first compute the Brauer group $\Br(\mathcal C_B)$ of the universal smooth plane cubic. We still follow the strategy in Proposition~\ref{prop: brauer of symm}, but with an explicit resolution of singularities of the universal singular cubic as an extra ingredient.

\begin{lem}\label{lem: appendixc0}
    Let $\mathcal C_B \to B \subset |\mathcal O_{\mathbb P^2}(3)|$ denote the universal smooth plane cubic. Then
    $$\Br(\mathcal C_B) \simeq \mathbb Z/2\mathbb Z.$$
\end{lem}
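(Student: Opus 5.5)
We follow the strategy of Proposition~\ref{prop: brauer of symm} with $n=1$, applied to the universal cubic $\mathcal C \subset |\mathcal O_{\mathbb P^2}(3)| \times \mathbb P^2$. Since $\mathcal C \to \mathbb P^2$ is a projective bundle, $\mathcal C$ is smooth and rational, so $\Br(\mathcal C)=0$ and $H^3(\mathcal C,\mathbb Z/m\mathbb Z)=0$. Lemma~\ref{lem:brauer_bm} then gives
\[
\Br(\mathcal C_B)[m]\simeq H_{\mathrm{top}-1}(\mathcal C_\Delta,\mathbb Z/m\mathbb Z),
\]
where $\mathcal C_\Delta$ is the universal singular cubic. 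The difficulty is that $\mathcal C_\Delta$ is singular along the locus $\{(C,p): p\in \Sing(C)\}\cong \Sigma$.

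\textbf{Resolution step.} Our first move is to replace $\mathcal C_\Delta$ by $\mathcal C_\Sigma=\mathcal C\times_\Delta \Sigma$. We then blow up the section $\Sigma\hookrightarrow \mathcal C_\Sigma$, $(C,p)\mapsto((C,p),p)$, to obtain $\widetilde{\mathcal C}_\Sigma$.

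Generically over $\Sigma$ this blow-up normalizes the fiber at the marked node. Indeed, the map $\widetilde{\mathcal C}_\Sigma\to \Sigma\times\mathbb P^2$ (or to $\mathrm{Bl}_\Delta(\mathbb P^2\times\mathbb P^2)$) should be a projective bundle over an open set whose complement has codimension $\geq 2$. This follows from $3$-very ampleness and the incidence conditions "$p$ singular, $q\in C$".

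Consequently $H^{\BM}_{\mathrm{top}-1}$ of that open piece equals $H^1$ of a simply connected base, which vanishes. Excision then yields the vanishing of $H_{\mathrm{top}-1}$ for $\widetilde{\mathcal C}_\Sigma$.

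\textbf{Comparison step.} Next we compare $\widetilde{\mathcal C}_\Sigma$ with $\mathcal C_\Delta$ via the long exact sequence for the proper map $\widetilde{\mathcal C}_\Sigma\to\mathcal C_\Delta$, exactly as in Lemma~\ref{lemma: br isom to ker}. This map is an isomorphism away from two loci:
\begin{enumerate}
\item the preimage of the nodal locus $\Sigma\subset\mathcal C_\Delta$, where it is $2{:}1$ (the exceptional divisor $E$ consists of two branch points over $\Sigma_1$);
\item the preimage of $\mathcal C_{\Delta_{\geq 2}}$.
\end{enumerate}
This gives
\[
\Br(\mathcal C_B)[m]\simeq \ker\Big(H_{\mathrm{top}}(E\cup\widetilde{\mathcal C}_{\Sigma_{\geq2}})\to H_{\mathrm{top}-2}(\widetilde{\mathcal C}_\Sigma)\oplus H_{\mathrm{top}}(\Sigma\cup\mathcal C_{\Delta_{\geq 2}})\Big).
\]
The top-dimensional components on the left are:
\begin{enumerate}
\item $E$, which maps $2{:}1$ to $\Sigma$ (the two branches);
\item the components of $\widetilde{\mathcal C}_{\Sigma_{\geq2}}$ over conic-plus-line cubics, where the chosen node is one of the two intersection points and the third point lies on the conic or on the line. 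These give two components, each of degree $2$ over the corresponding component of $\mathcal C_{\Delta_{\geq 2}}$.
\end{enumerate}
It follows, as in Lemma~\ref{lem: brauersymm2}, that the kernel is $2$-torsion and at most $(\mathbb Z/2)^{3}$. The question is then which combinations of halves of these classes vanish in $H^2(\widetilde{\mathcal C}_\Sigma,\mathbb Z/2)$.

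\textbf{Main obstacle.} The crucial computation is the class in $H^2(\widetilde{\mathcal C}_\Sigma,\mathbb Z/2)$ of each of these divisors, and hence the kernel mod $2$. Here $H^2(\widetilde{\mathcal C}_\Sigma)$ is generated by $H$, the pullbacks of $h$ from the two $\mathbb P^2$ factors (for $p$ and $q$), and $[E]$.

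We would compute the coefficients by restricting to test curves:
\begin{enumerate}
\item a pencil in a fiber over fixed $(p,q)$, which gives the $H$-coefficient. This is a Lemma~\ref{lemma: relative degree}-style Euler characteristic count on a pencil of cubics with a common node at $p$ through $q$, counting conic-plus-line members of each type;
\item curves moving $p$ or $q$, which give the $h$-coefficients and use Severi degree data.
\end{enumerate}
The expected outcome is that exactly one nontrivial combination (plausibly $[E]$ plus one of the reducible components, or the sum of the two reducible components) is divisible by $2$. That would give $\mathbb Z/2$.

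Parity bookkeeping for the $E$-coefficients and for the codimension-two excisions is where errors are likely. If a direct count is messy, we would instead produce a nonzero element explicitly to get the lower bound. The candidate is the pullback of the nontrivial class of $\Br(B)\simeq\mathbb Z/2$ from Theorem~\ref{thm: brauer P2}: we would check it stays nonzero on $\mathcal C_B$ because its residue along $\Delta$ survives on the pulled-back divisor $\mathcal C_\Delta$. The kernel computation then only needs to supply the upper bound.
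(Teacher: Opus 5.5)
\textbf{There is a genuine gap: the mod-$2$ class computation that decides the answer is never carried out, and your fallback for the lower bound rests on a false claim.}

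Your setup matches the paper's. Blowing up the marked-node section of $\mathcal C_\Sigma$ gives exactly the strict transform $\widetilde{\mathcal C}_\Delta\subset\Delta\times\Bl_{\mathbf D}(\mathbb P^2\times\mathbb P^2)$ used there. The three top-dimensional components $E$, $\widetilde{\mathcal Q}$, $\widetilde{\mathcal L}$, each generically $2{:}1$ onto its image, give the bound $\Br(\mathcal C_B)\subset(\mathbb Z/2\mathbb Z)^{\oplus 3}$.

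One smaller repair first. A bundle structure off codimension two is not enough for what follows. In fact the strict transform is a $\mathbb P^5$-bundle over all of $\Bl_{\mathbf D}(\mathbb P^2\times\mathbb P^2)$: over a point $(q,v)$ of the exceptional divisor, the condition is the single nonzero linear condition that the Hessian at $q$ vanish on $v$. This is what makes $\widetilde{\mathcal C}_\Sigma$ smooth with $H^2$ free on $H,h_p,h_q,E$.

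The gap itself: the lemma is precisely the assertion that exactly one nonzero $\mathbb F_2$-combination of $[E],[\widetilde{\mathcal Q}],[\widetilde{\mathcal L}]$ vanishes in $H^2(\widetilde{\mathcal C}_\Sigma,\mathbb Z/2\mathbb Z)$. You leave this as an ``expected outcome'' with two candidate answers. Without it you have no lower bound and nothing sharper than $(\mathbb Z/2\mathbb Z)^{\oplus 3}$. You need four test curves whose intersection matrix with $(H,h_p,h_q,E)$ is invertible mod $2$, including one that meets $E$; you only sketch curves for the $H$- and $h$-coefficients.

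In the paper's convention ($p$ the marked point, $q$ the node, the reverse of yours), the four curves are:
\begin{enumerate}
\item the pencil of cubics singular at $q$ through five general points, with $p_1$ marked;
\item the same pencil with the marked point running along a general line;
\item the same pencil with the marked point the residual intersection with a fixed line through $q$ (this curve meets $E$ once);
\item the cubics through six general points whose node runs along a general line, marking $p_1$.
\end{enumerate}
Counting conic-plus-line members of each type gives $[\mathcal Q]=4H-2h_p-4h_q+3E$ and $[\mathcal L]=H+2h_p+h_q-3E$. Hence $[\mathcal Q]\equiv E$ and $[\mathcal L]\equiv H+h_q+E$ mod $2$, and the kernel is spanned by $[\widetilde{\mathcal Q}]+[E]$. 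That is your first guess, but it has to be computed.

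Your fallback for the lower bound does not work. For $d=3$, which is odd, Theorem~\ref{thm: brauer P2} gives $\Br(B)=0$, not $\mathbb Z/2\mathbb Z$; the paper uses $\Br(B)=0$ in the proof of Proposition~\ref{prop: appendix}. So there is no class to pull back. The nontrivial element of $\Br(\mathcal C_B)$ does not come from the base, which is exactly why the cubic case needs this separate computation. Both bounds must therefore come from the explicit class computation above, done for $m=2$ and then for $m=2^k$, using that $H^*(\widetilde{\mathcal C}_\Sigma,\mathbb Z)$ is torsion-free.
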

\begin{proof}

Note that ${\mathcal C}_{\Sigma_{\geq 2}} = \mathcal Q \cup \mathcal L$ splits into two irreducible components, since a general element $C \in \Delta_{\geq 2}$ splits as the union of a conic and line $C = Q \cup L$. We resolve the singularities of $\mathcal{C}_\Sigma$ as follows: Let $\mathbf{D} \subset \mathbb P^2 \times \mathbb P^2$ denote the diagonal, and consider the locus
$$\widetilde{\mathcal C}_{\Delta} \coloneq\overline{\{([C], p, q) \in \Delta \times ((\mathbb P^2 \times \mathbb P^2) \setminus \mathbf D) \mid p \in C,\, q \in \Sing(C)\}} \subset \Delta \times \Bl_{\mathbf{D}}(\mathbb P^2 \times \mathbb P^2).$$
The fiber of $\widetilde{\mathcal C}_\Delta \to \Bl_{\mathbf{D}}(\mathbb P^2 \times \mathbb P^2)$ over a point on the exceptional divisor, corresponding to a point $q \in \mathbb P^2$ and a tangent direction, is the locus of plane cubic curves that are singular at $q$ such that the tangent cone at $q$ contains the prescribed tangent direction. It follows from this description that the morphism
$$\widetilde{\mathcal C}_\Delta \to \Bl_{\mathbf D}(\mathbb P^2 \times \mathbb P^2)$$
is a projective bundle (of relative dimension $5$) and, in particular, $\widetilde{\mathcal C}_\Delta$ is smooth and resolves the universal singular cubic by the natural maps $\widetilde{\mathcal C}_{\Delta} \to \mathcal{C}_\Sigma \to \mathcal{C}_\Delta$, forgetting the choices of singular points.

By abuse of notation, we denote the pull-backs of $[\mathcal{Q}]$ and $[\mathcal{L}]$ to $H^2(\widetilde{\mathcal{C}}_\Delta, \mathbb Z/ 2\mathbb Z)$ in the same notations. Then, we have 
\[
H^1(\widetilde{\mathcal C}_{\Delta}, \mathbb Z / m \mathbb Z) = 0,\quad  H^2(\widetilde{\mathcal C}_\Delta, \mathbb Z / m \mathbb Z) = \langle H, h_p, h_q, E \rangle,
\]
where $H$ is the relative polarization pulled back from $|\mathcal O_{\mathbb P^2}(3)|,$ $h_p$ is the pullback of the ample generator on the copy of $\mathbb P^2$ parametrizing the (generically) non-singular point, $h_q$ is the pullback of the ample generator on the copy of $\mathbb P^2$ parametrizing the singular point, and $E$ is the pullback of the exceptional divisor on the blowup.

Since $\widetilde{\mathcal C}_{\Delta} \to \mathcal C_\Delta$ is a homeomorphism away from $\widetilde{\mathcal C}_{\Delta_{\geq 2}} \cup E$, we have
$$H^1(\mathcal C_{\Delta}, \mathbb Z / m \mathbb Z) \simeq \ker(H_{\mathrm{top}}(\widetilde{\mathcal C}_{\Delta_{\geq 2}} \cup E, \mathbb Z / m \mathbb Z) \to H_{\mathrm{top}}(\mathcal C_{\Delta_{\geq 2}} \cup \Sigma, \mathbb Z / m \mathbb Z) \oplus H^2(\widetilde{\mathcal C}_{\Delta}, \mathbb Z / m \mathbb Z)),$$
cf.\ the proof of Proposition~\ref{prop: brauer of symm},
where we recall that $\Sigma \subset \mathcal C_{\Delta}$ denotes the locus of singular points in the fibers. As before, the map $\widetilde{\mathcal{C}}_{\Delta_{\geq 2}} \cup E \to \mathcal{C}_{\Delta \geq 2} \cup \Sigma$ is generically $2$-to-$1$ on each irreducible component, so $\mathrm{Br}(\mathcal{C}_B)$ can only have $2$-primary torsion elements.

In order to compute the presentation of the classes $[\mathcal Q]$ and $[\mathcal L]$ with respect to the basis $$\langle H, h_p, h_q, E \rangle = H^2(\widetilde{\mathcal C}_{\Delta}, \mathbb Z / m \mathbb Z),$$ we consider the following pencils in $\widetilde{\mathcal C}_\Delta$:
\begin{enumerate}
\item Fix five points $p_1, \dots, p_5 \in \mathbb P^2$ and $q \in \mathbb P^2$ in general position. This datum uniquely determines a pencil of plane cubic curves that pass through the five points $p_1, \dots, p_5$ and admit a singularity at the point $q$. By marking the point $p_1$, we obtain a map
$$\varphi_H \colon \mathbb P^1 \to \widetilde{\mathcal C}_\Delta$$
satisfying
$$\deg \varphi_H^* H = 1,\;\;\; \deg \varphi_H^* h_p = \deg \varphi_H^* h_q = \deg \varphi_H^* E = 0.$$
The pencil contains $5$ reducible members that split as a conic through four of the five points and $q$ and the line through $q$ and the other point. In particular, we have
\[
\deg \varphi^*_H [\mathcal Q] = 4, \quad \deg \varphi_H^* [\mathcal L] = 1.
\]
\item Let $p_1,\ldots, p_5$ and $q$ be as above and fix a general line $l$ in $\mathbb P^2$. For any $p\in l$, there exists a unique cubic curve passing through $p_1,\ldots, p_5$ and $p$, as well as having a singularity at $q$. Marking the points $p$ and $q$, we obtain a pencil
$$\varphi_p \colon \mathbb P^1 \to \widetilde{\mathcal C}_\Delta$$
satisfying
$$\deg \varphi_p^*H = 3, \;\;\; \deg \varphi_p^*h_p = 1,  \;\;\; \deg \varphi^*_p h_q = \deg \varphi^*_p E = 0.$$
By considering the intersection between the line and the $5$ reducible members of the pencil, we obtain 
$$\deg \varphi_p^* [\mathcal Q] = 5 \cdot 2, \;\;\;\deg \varphi_p^*[\mathcal{L}] = 5.$$
\item Again, we take the same pencil, but fix a line in $\mathbb P^2$ that passes through $q$. Then, we mark the residual intersection point of this line with each member of this pencil and obtain a family
$$\varphi_E \colon \mathbb P^1 \to \widetilde{\mathcal C}_\Delta$$
satisfying
$$\deg \varphi^*_E H = \deg \varphi^*_E E = \deg \varphi^*_E h_p = 1, \;\;\; \deg \varphi_E^* h_q = 0.$$
By intersecting the $5$ reducible members of the pencil with the fixed line through $q$, we obtain
$$\deg \varphi_E^* [\mathcal Q] =  5, \;\;\; \deg \varphi^*_E [\mathcal L] = 0.$$

\item Finally, we take a different pencil as follows. Fix six points $p_1, \ldots, p_6 \in \mathbb P^2$ and a line $l$ in general position. For each $q\in l$, there exists a unique cubic curve passing through $p_1,\ldots, p_6$ and having a singularity at $q$. By marking the points $p_1$ and $q$, we obtain a map
\[
\varphi_q \colon \mathbb P^1 \to \widetilde{\mathcal{C}}_\Delta.
\]
Using classical geometry, one computes
\[
\deg \varphi_q^* H = 6, \quad \deg \varphi_q^* h_p = 0, \quad \deg \varphi_q^* h_q = 1,\quad \deg \varphi_q^* E = 0.
\]
Reducible cubics in this pencil have two types: the line $L$ passes through either one or two points among $p_1,\ldots, p_6$. From this, we obtain
\[
\deg \varphi^*_q [\mathcal Q] = 20, \quad \deg \varphi_q^* [\mathcal L] = 7.
\]

\end{enumerate}
Solving the resulting linear system of equations, it follows that 
\[
[\mathcal{Q}] = 4 H -2 h_p - 4 h_q +3 E, \quad [\mathcal{L}] = H + 2 h_p + h_q - 3E.
\]
Thus for $m=2$, the kernel we wish to compute equals $\mathbb Z/2\mathbb Z\cdot([\mathcal{Q}]-E)$, and $\mathrm{Br}(\mathcal{C}_B)[2]\simeq \mathbb Z/2\mathbb Z$. Repeating the argument with $m = 2^k$, we conclude that $\Br(\mathcal C_B) \simeq \mathbb Z/2\mathbb Z$.
\end{proof}

\begin{cor}
    Let $\mathcal C_B \to B \subset |\mathcal O_{\mathbb P^2}(3)|$ denote the universal smooth plane cubic. Then
    $$\Br(\mathcal C_B^{(2)}) = \mathbb Z/2\mathbb Z.$$
\end{cor}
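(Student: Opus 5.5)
We want $\Br(\mathcal C_B^{(2)})\simeq \mathbb Z/2\mathbb Z$ for the universal smooth plane cubic. The plan is to squeeze $\Br(\mathcal C_B^{(2)})$ between $\Br(\mathcal C_B)$ and $\Br(\mathcal C_B^{(3)})$ using Lefschetz-type comparisons in the spirit of Proposition~\ref{prop: brauer isom}. We then use Lemma~\ref{lem: appendixc0} together with the torsion bound of Lemma~\ref{lem: brauersymm2}.

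\textbf{Upper bound (2-torsion).} First we show $2\cdot \Br(\mathcal C_B^{(2)})=0$, or at least that only $2$-primary torsion occurs. We repeat the argument of Lemma~\ref{lem: brauersymm2} with $n=2$. The map $\mathcal C^{[2]}\to(\mathbb P^2)^{[2]}$ is a projective bundle, so $\mathcal C^{[2]}$ is smooth, has trivial Brauer group and has $H^3=0$. Lemma~\ref{lem:brauer_bm} and the excision over $W$ then give
\[
\Br(\mathcal C_B^{(2)})[n]\hookrightarrow \ker\big(H_{\mathrm{top}}(\mathcal C^{[2]}_{\Sigma_{\geq 2}})\to H_{\mathrm{top}}(\mathcal C^{[2]}_{\Delta_{\geq 2}})\big).
\]
Here $\mathcal C^{[2]}_{\Sigma_{\geq2}}$ has the components $[c,l]$ with $c+l=2$, and each maps generically $2:1$. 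So the kernel is $(\mathbb Z/\gcd(2,n))^{\oplus 3}$.

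\textbf{Comparison maps.} Next we relate the three Brauer groups through pullbacks.

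The first map is pullback along a natural morphism. We use the "adding a residual point" construction in place of $i_L$, which needs $d$ points. For instance, the map $\mathcal C_B\times_B\mathcal C_B\to \mathcal C_B^{(2)}$ is finite of degree $2$. The cleaner alternative is $\mathcal C^{(2)}_B\to \mathcal C_B$, sending $p+q$ to the third intersection point of the line $\overline{pq}$ with $C$. This is well defined on smooth cubics, with the tangent line when $p=q$. Its fiber over $r\in C$ is the pencil of lines through $r$, i.e.\ $\mathbb P^1$. Thus $\mathcal C_B^{(2)}\to\mathcal C_B$ is a $\mathbb P^1$-bundle in the \'etale topology, namely a conic bundle, and its restriction to $\Br$ is surjective with kernel generated by the class of this Brauer--Severi fibration.

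The period of that class is the gcd of the relative degrees of line bundles on $\mathcal C_B^{(2)}$ over $\mathcal C_B$. The pullback of $h$ from the $\mathbb P^2$ containing the points restricts on a fiber, the lines through $r$, to a divisor of degree $2$. The class $\delta$ consists of the tangent lines through $r$; there are $4$ of them, so $2\delta$ has degree $4$ and $\delta$ degree $2$. Everything has even degree, and the class is possibly nontrivial of period $2$. Hence we need more than this.

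\textbf{Main obstacle.} The key step is to decide whether $\Br(\mathcal C_B^{(2)})\to$ anything is injective and nonzero. I would use the exact sequence
\[
0\to\langle\beta\rangle\to \Br(\mathcal C_B)\to \Br(\mathcal C_B^{(2)})\to 0,
\]
which holds because $\mathcal C_B^{(2)}\to\mathcal C_B$ is a Brauer--Severi scheme. Together with $\Br(\mathcal C_B)\simeq\mathbb Z/2\mathbb Z$ from Lemma~\ref{lem: appendixc0}, this shows $\Br(\mathcal C_B^{(2)})$ is $0$ or $\mathbb Z/2\mathbb Z$. It remains to show that $\beta$ is trivial, i.e.\ that the conic bundle has a rational section or a line bundle of odd relative degree.

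A line bundle of odd relative degree would come from a divisor on $\mathcal C^{(2)}_B$ pulled back via $\mathcal C^{[2]}$. I would compute directly, since by the analogue of Lemma~\ref{lem: desc pic} the Picard group is $\langle H,h^{[2]},\delta\rangle$. If all these classes have even degree, as computed above, then $\beta\neq0$ and $\Br(\mathcal C_B^{(2)})$ would vanish. That contradicts the intended statement, so the relative degree computation needs rechecking. The fiber should rather be identified with $C$ minus something, or with the pencil of lines via $r\mapsto$ a residual pair, where $h$ restricts to degree $1$: the pairs meeting a fixed line $l'$ on the lines through $r$.

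Concretely, in the pencil of lines through $r$, the condition is that one of the residual points lies on $l'$. Generically exactly the line through $r$ and one of the $3$ points of $C\cap l'$ satisfies this, so the degree is $3$, which is odd. This gives a section of the conic bundle, so $\beta=0$ and $\Br(\mathcal C_B^{(2)})\simeq\Br(\mathcal C_B)\simeq\mathbb Z/2\mathbb Z$. Verifying this degree count carefully is the step I expect to require the most care.
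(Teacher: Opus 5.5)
Your final argument matches the paper's proof. The residual-point map $\mathcal C^{(2)}_B\to\mathcal C_B$ is exactly the Abel--Jacobi map $Z\mapsto\mathcal I_Z(1)$ into $\Pic^1_{\mathcal C/B}\simeq\mathcal C_B$, a $\mathbb P^1$-fibration whose Brauer class vanishes, so $\Br(\mathcal C^{(2)}_B)\simeq\Br(\mathcal C_B)\simeq\mathbb Z/2\mathbb Z$ by Lemma~\ref{lem: appendixc0}. Your corrected relative degree $3$ for $h^{(2)}$ is right and your first value $2$ was wrong; the $2$-torsion bound and the cover $\mathcal C_B\times_B\mathcal C_B\to\mathcal C^{(2)}_B$ are unnecessary, and triviality of the class is even clearer since $\mathcal C^{(2)}_B$ is the projectivization of the rank-$2$ bundle of linear forms vanishing at the marked point.
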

\begin{proof}
    The claim follows from the previous lemma by observing that the Abel--Jacobi map
    \begin{align*}
    \mathcal C^{(2)}_B \to \Pic^{-2}_{\mathcal C / B} \simeq \Pic^1_{\mathcal C/B} \simeq \mathcal C_B
    \end{align*}
    is a $\mathbb P^1$-bundle.
\end{proof}

We are now ready to prove Proposition~\ref{prop: appendix}.
\begin{proof}[Proof of Proposition~\ref{prop: appendix}]
Let us first show that there is an isomorphism $$\Br(\mathcal C^{(3)}_B) \simeq H^1(B, \Pic_{\mathcal C/B}).$$
Indeed, let $\alpha \in \Br(\mathcal J)$ denote the Brauer class associated to the Brauer--Severi variety
$$\mathcal C_B^{(3)} \to \mathcal J,\quad Z \mapsto \mathcal I_Z(1).$$
By Lemma~\ref{lem: computing period}, we have $\operatorname{per}(\alpha) = 3$ and thus there is a short exact sequence
\begin{equation}\label{eq:ex1}0 \to \mathbb Z / 3 \mathbb Z \to \Br(\mathcal J) \to \Br(\mathcal C_B^{(3)}) \to 0.\end{equation}
On the other hand, the Leray spectral sequence yields an isomorphism
$$\Br(\mathcal J) \simeq H^1(B, \Pic_{\mathcal J/ B}),$$
whereas as the short exact sequence
$$0 \to \Pic^0_{\mathcal C / B} \to \Pic_{\mathcal J/B} \to \mathbb Z \to 0$$
yields an isomorphism $H^1(B, \Pic_{\mathcal J/B}) \simeq H^1(B, \Pic^0_{\mathcal C / B})$ and thus we have
$$H^1(B, \Pic^0_{\mathcal C / B}) \simeq H^1(B, \mathrm{Pic}_{\mathcal J / B}) \simeq \Br(\mathcal J).$$
Since the relative degree of any line bundle on $\mathcal C / B$ is divisible by three, the short exact sequence
$$0 \to \Pic^0_{\mathcal C / B} \to \Pic_{\mathcal C / B} \to \mathbb Z \to 0$$
induces the short exact sequence
\begin{equation}\label{eq:ex2}0 \to \mathbb Z / 3 \mathbb Z \to H^1(B, \Pic^0_{\mathcal C/ B})\simeq \Br(\mathcal J) \to H^1(B, \Pic_{\mathcal C / B}) \to H^1(B, \mathbb Z) = 0.\end{equation}
By comparing the short exact sequences (\ref{eq:ex1}) and (\ref{eq:ex2}), and using that $2 \cdot \Br(\mathcal C^{(3)}_B) = 0$ by Lemma~\ref{lem:  brauersymm2}, it follows that
$$\Br(\mathcal C^{(3)}_{B}) \simeq H^1(B,  \Pic_{\mathcal C/ B}),$$ and in particular $2 \cdot H^1(B, \mathrm{Pic}_{\mathcal C/B}) = 0$.

Let $\pi \colon \mathcal C_B \to B$ and $\pi^{(3)} \colon \mathcal C^{(3)}_B \to B$ denote the universal smooth plane cubic and its relative symmetric product. The functoriality of the Grothendieck spectral sequence with respect to the natural morphism
\begin{align*}R\pi_* \mathbb G_m &\to R\pi^{(3)}_* \mathbb G_m \\
\gamma &\mapsto \gamma^{(3)}
\end{align*}
induces the diagram
$$\begin{tikzcd}
  0 = \Br(B) \arrow[r] \arrow[d] &\Br(\mathcal C_B)\simeq \mathbb Z/2\mathbb Z \arrow[r] \arrow[d] & {H^1(B, \Pic_{\mathcal C/B})} \arrow[d] \arrow[r] & {H^3(B, \mathbb G_m)} \arrow[d, "\times 3"] \\
0 = \Br(B) \arrow[r] & F^1 \mathrm{Br}(\mathcal C_B^{(3)}) \arrow[r] & {H^1(B, \Pic_{\mathcal C^{(3)}/B})} \arrow[r]              & {H^3(B, \mathbb G_m)}                     
\end{tikzcd}$$
with exact rows, where $\Br(\mathcal C_B) \simeq \mathbb Z / 2 \mathbb Z$ by Lemma~\ref{lem: appendixc0}. Note that intersecting with a general line induces a section of $\mathcal C^{(3)}_B \to B$, so the map $H^1(B, \Pic_{\mathcal C^{(3)} /B}) \to H^3(B, \mathbb G_m)$ is trivial. Since we have $2 \cdot H^1(B, \Pic_{\mathcal C /B}) = 0$ by the first part of this proof, a simple diagram chase then shows that the map
$$\mathbb Z / 2 \mathbb Z \simeq \Br(\mathcal C_B) \to H^1(B, \Pic_{\mathcal C / B})$$
is an isomorphism.
All in all, we conclude that
\[
\Br(\mathcal C^{(3)}_B) \simeq H^1(B, \Pic_{\mathcal C/B}) = \mathbb Z / 2 \mathbb Z.
\qedhere
\]
\end{proof}

Finally, we finish the proof of Theorem~\ref{thm: tate-shafarevich} for $d = 3$.
\begin{thm}
The Tate--Shafarevich group of the relative Jacobian of the universal smooth plane cubic curve is cyclic of order $6$, i.e.,
$$\Sha(\mathcal J) \simeq \mathbb Z / 3 \mathbb Z \cdot [\Pic^1_{\mathcal C / B}] \oplus \mathbb Z / 2 \mathbb Z.$$
\end{thm}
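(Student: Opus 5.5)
The plan is to combine the exact sequences already assembled in the proof of Proposition~\ref{prop: appendix}. For $d=3$, the shape of $\NS_{\mathcal J/B}$ gives a sequence of the form
\[
0 \to H^0(B,\NS_{\mathcal J/B})/H^0(B,\Pic_{\mathcal J/B}) \to \Sha(\mathcal J) \to H^1(B,\Pic_{\mathcal J/B}) \to 0 .
\]
Here $g=1$, so $g-1=0$ and $\Theta$ is the class of the origin section. By Lemma~\ref{lem: desc picJ} (odd $d$) the first term vanishes. Hence $\Sha(\mathcal J)\simeq H^1(B,\Pic_{\mathcal J/B})$, which is the isomorphism $H^1(B,\Pic^0_{\mathcal C/B})\simeq H^1(B,\Pic_{\mathcal J/B})\simeq \Br(\mathcal J)$ already noted in the proof of Proposition~\ref{prop: appendix}. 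That identification used $\Br(B)=0$ from Theorem~\ref{thm: brauer P2} for $d=3$, together with the section of $\mathcal J\to B$.

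Next I would determine $\Br(\mathcal J)$ from the exact sequence \eqref{eq:ex1},
\[
0\to \mathbb Z/3\mathbb Z\cdot\alpha \to \Br(\mathcal J)\to \Br(\mathcal C^{(3)}_B)\to 0 .
\]
Lemma~\ref{lem: computing period} gives $\operatorname{per}(\alpha)=3$, and Proposition~\ref{prop: appendix} gives $\Br(\mathcal C^{(3)}_B)\simeq \mathbb Z/2\mathbb Z$. An extension of $\mathbb Z/2\mathbb Z$ by $\mathbb Z/3\mathbb Z$ is automatically split because the orders are coprime, so $\Br(\mathcal J)\simeq \mathbb Z/6\mathbb Z\simeq \mathbb Z/3\mathbb Z\oplus\mathbb Z/2\mathbb Z$. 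Transporting this through the isomorphisms above yields $\Sha(\mathcal J)\simeq\mathbb Z/6\mathbb Z$.

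It remains to identify the $3$-torsion part with $\langle[\Pic^1_{\mathcal C/B}]\rangle$. The relative degree of any line bundle on $\mathcal C_B$ is divisible by $3$: degrees of line bundles on $\mathcal C_B$ restricted to fibers lie in $3\mathbb Z$, because the Picard group is generated by pullbacks of $\mathcal O(1)$ and classes from $B$, and $\Pic(B)$ is torsion. It follows that $[\Pic^k_{\mathcal C/B}]$ is trivial exactly when $3\mid k$. Thus $[\Pic^1_{\mathcal C/B}]$ has order $3$, and it generates the unique subgroup of order $3$, which is the image of $\mathbb Z/3\mathbb Z$ in \eqref{eq:ex2}.

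The main obstacle is making sure the identifications $\Sha(\mathcal J)\simeq H^1(B,\Pic_{\mathcal J/B})\simeq\Br(\mathcal J)$ are compatible, so that the $\mathbb Z/3$ in \eqref{eq:ex1}, generated by $\alpha$, matches the $\mathbb Z/3$ in \eqref{eq:ex2}, generated by the $\Pic^k$ torsors. For the statement as formulated this is not actually needed: the group is cyclic of order $6$, so its $3$-part is unique and must be generated by $[\Pic^1_{\mathcal C/B}]$. The $2$-torsion complement then corresponds to the nontrivial class of $\Br(\mathcal C_B)\simeq H^1(B,\Pic_{\mathcal C/B})$ from Lemma~\ref{lem: appendixc0}.
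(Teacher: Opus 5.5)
Your proposal is correct and follows essentially the same route as the paper: you use $\Br(B)=0$ and $H^0(B,\NS_{\mathcal J/B})/H^0(B,\Pic_{\mathcal J/B})=0$ to get $\Sha(\mathcal J)\simeq H^1(B,\Pic_{\mathcal J/B})\simeq\Br(\mathcal J)$. You then read off $\mathbb Z/6\mathbb Z$ from the extension \eqref{eq:ex1} of $\Br(\mathcal C^{(3)}_B)\simeq\mathbb Z/2\mathbb Z$ by $\langle\alpha\rangle\simeq\mathbb Z/3\mathbb Z$, and your identification of the $3$-part with $\langle[\Pic^1_{\mathcal C/B}]\rangle$ (relative degrees divisible by $3$, plus $\Br(B)=0$ so that sections of $\Pic_{\mathcal C/B}$ lift to line bundles) just spells out what the paper leaves implicit via \eqref{eq:ex2}.
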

\begin{proof}
    We follow the argument in the proof of Theorem~\ref{thm: tate-shafarevich main text}. Using that
    \[
    \mathrm{Br}(B) = 0, \quad H^0(B, \NS_{\mathcal{J} / B}) / H^0(B, \Pic_{\mathcal{J}/B}) = 0,
    \]
    and note that $F^1\mathrm{Br}(\mathcal{J}) = \Br(\mathcal{J})$ in the $d=3$ case, we have
    \[
    \Sha(\mathcal{J}) \simeq H^1(B, \mathrm{Pic}_{\mathcal{J}/B}) \simeq \mathrm{Br}(\mathcal{J}).
    \]
    Now we have the short exact sequence
    $$0 \to \langle \alpha\rangle \to \Br(\mathcal J) \to \Br(\mathcal C^{(3)}_B) \to 0,$$
    so the claim that $\mathrm{Br}(\mathcal{J}) \simeq \mathbb Z/6\mathbb Z$ follows from Lemma~\ref{lem: computing period} and Proposition~\ref{prop: appendix}.
\end{proof}

\section{Primitively polarized K3 surfaces}\label{sec: K3}

The purpose of this section is to prove Theorem~\ref{thm: brauer k3}, which we recall below, and to exhibit concrete counterexamples for low degree polarized K3 surfaces.

\begin{thm}[Theorem~\ref{thm: brauer k3}]
    Let $(S, L)$ be a primitively polarized K3 surface of Picard rank one and degree at least $10$. Then
    \[
    \mathrm{Br}(|L|_\mathrm{sm}) = 0.
    \]
\end{thm}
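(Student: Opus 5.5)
The plan is to apply Theorem~\ref{thm:brauer_base} to $X = S$, which is simply connected. There are two things to do: verify the ampleness condition \ref{defn: ampleness} for $L$, and show that $[\overline{\Sigma_2}] \neq 0 \in H^2(\Sigma, \mathbb Z/2\mathbb Z)$.

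The second point is the easier one, by Lemma~\ref{lemma: relative degree}. Write $(L^2) = 2g-2$ with $g \geq 6$, since the degree is at least $10$. Then $\dim|L| = g \geq 5$, and we have $e(S) = 24$ and $K_S = 0$. The lemma gives
\[
\deg_{\Sigma/S}\Sigma_{\geq 2} \equiv 24 + (2g-2) + 1 \equiv 1 \mod 2.
\]
So the coefficient of $H$ in $[\overline{\Sigma_2}]$ is odd. Hence the class is nonzero in $H^2(\Sigma,\mathbb Z/2\mathbb Z) \simeq \mathbb Z/2\mathbb Z\cdot H \oplus H^2(S,\mathbb Z/2\mathbb Z)$, and Theorem~\ref{thm:brauer_base} yields $\mathrm{Br}(|L|_\mathrm{sm}) = 0$. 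In particular, unlike the even-degree plane case, no computation of the $H^2(S)$-component is needed.

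The work therefore lies in checking \ref{defn: ampleness}, following Remark~\ref{rmk: strategy to check dagger}.

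\textbf{Condition (i).} Since $\rho(S)=1$ and $L$ is primitive, $|L|$ contains no reducible or non-reduced curves. Very ampleness follows from Saint-Donat for $(L^2)\geq 4$, as there are no hyperelliptic or elliptic obstructions in Picard rank one.

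\textbf{Conditions (ii) and (iii).} All curves in $|L|$ are integral. I would invoke the known results on Severi varieties of K3 surfaces: for $k \le g$, the locus of $k$-nodal curves is nonempty of pure codimension $k$ (Mumford, Chen). For $k=1,2$ these loci are irreducible; this is Ciliberto--Dedieu's irreducibility for $g\ge 3$ or $4$, or a direct argument via monodromy of nodes. Finally, $\{\delta = k\}$ has dimension $g-k$ with the nodal curves dense in it, by the deformation theory of equigeneric families on K3 surfaces. This gives density of $\Delta_1$ and that $\overline{\Delta_2}$ is the unique divisorial component of $\Delta_{\geq 2}$. Curves with infinitely many singular points are non-reduced and hence absent, so $\Delta_\infty = \emptyset$.

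\textbf{Condition (iv).} This requires irreducibility of $\overline{\Sigma_2}$, i.e.\ that the monodromy of the double cover $\Sigma_2 \to \Delta_2$ swaps the two nodes. This is the step I expect to be the main obstacle, since $L$ is not $3$-jet ample and the projective-bundle argument used for $\mathbb P^2$ fails. I would use a tacnode degeneration. First show, by a dimension count using that tacnodal curves form a codimension-$2$ family of expected dimension (again from deformation theory of K3 curves, with the degree bound ensuring existence), that $\overline{\Delta_2}$ contains curves with a single ordinary tacnode. Locally near such a point the $2$-nodal stratum is the curve $y^2 = x^4 + a x^2 + b$ with $a^2 = 4b$, along which the two nodes coalesce. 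A loop around the tacnodal locus inside $\Delta_2$ then exchanges the two nodes, so $\Sigma_2$ is irreducible. Alternatively, irreducibility of $\overline{\Sigma_2}$ can be checked by showing that the space of pairs of distinct points $(p,q)$ together with a curve singular at both is irreducible over an open set of $S\times S$ where the conditions are independent. Independence holds for general $(p,q)$ when $g$ is large, which is presumably where degree $\ge 10$ enters, and degenerate pairs contribute smaller dimension.
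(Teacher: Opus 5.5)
Your proposal is correct and is essentially the paper's proof. You verify \ref{defn: ampleness} via the Severi-variety literature for (ii)--(iii) and a tacnode monodromy argument for (iv), then apply Lemma~\ref{lemma: relative degree} (valid since $\dim|L| = g \geq 6$) to get $\deg_{\Sigma/S}\Sigma_{\geq 2} = 6g+11$ odd, and conclude by Theorem~\ref{thm:brauer_base}.

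One correction concerns the tacnode step. Nonemptiness of the tacnodal locus (a divisor in $\overline{\Delta_2}$, of codimension $3$ in $|L|$) cannot come from a dimension count. Neither can the surjectivity of $H^0(C,N_{C/S})\to H^0(C,T^1_C)$, which is what lets you transplant the versal model $y^2=(x^2-t)^2$ into $|L|$ so that a small loop really swaps the nodes. The paper quotes both facts from Galati--Knutsen, and they hold already for $L^2\geq 4$. So the degree hypothesis enters through $\dim|L|\geq 5$ and the cited irreducibility and density results for the $2$-nodal Severi variety (Dedieu--Sernesi, Bruno--Lelli-Chiesa), not through the tacnode step.
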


\begin{proof}
    We first verify the ampleness condition \ref{defn: ampleness} for $L$. It is shown in \cite{Knutsen} that $L^2 \geq 4$ implies very ampleness on such K3 surfaces. The second condition in (ii) is automatic since every curve in $|L|$ is integral. For the rest of (ii) and (iii), we follow the strategy described in Remark~\ref{rmk: strategy to check dagger} and argue that the $k$-nodal loci for $k=1, 2$ are irreducible and dense in the $\{\delta = k\}$ loci, and of the expected dimension $\dim |L| - k$. Indeed, these results can be found in \cite[Section~4]{Dedieu-Sernesi} and \cite[Theorem~1.2]{Bruno-Lelli-Chiesa}; see also \cite[Section~2]{Ciliberto-Dedieu}. 
    
    For (iv), we use a monodromy argument involving tacnodes. For $L^2 \geq 4$, it is proved in \cite[Corollary~4.2]{Galati-Knutsen} that the 2-nodal locus in $|L|$ contains a non-empty, generically smooth divisor $\Delta_{\mathrm{tac}}$ consisting of curves with a tacnode. Furthermore, for $[C]\in \Delta_{\mathrm{tac}}$, the differential map 
    \[
    H^0(C, N_{C/S}) \to H^0(C, T_C^1)
    \]
    of the versal map at $[C]$ is surjective, see \cite[Theorem~1.1]{Galati-Knutsen}. It follows that the local deformation family of $[C]\in |D|$ maps to the versal deformation family of $C$ as a submersion. Thus we could choose a small disk $\mathbb D$ in $|D|$ centered at $[C]$, and local coordinates $x, y$ such that the curves $C_t$ are given by the equations $y^2=(x^2-t)^2$ for $t\in \mathbb D$, cf.~\cite[Section~2.4.1]{Caporaso-Harris}. Now choosing a loop $\gamma \subset \mathbb D$ around the origin, it is clear that circling around $\gamma$ exchanges the two nodes for a 2-nodal curve $C_t$. Since the 2-nodal locus is irreducible, this implies that the universal $2$-nodal locus is also irreducible, and the same holds for its closure $\overline{\Sigma_2}$. 
    
    Now we apply Theorem~\ref{thm:brauer_base} to $(S,L)$. Recall from Lemma~\ref{lemma: relative degree} that
    \[
    \deg_{\Sigma/S} \Sigma_{\geq 2} = e(S) - 7 + 3 (L^2) + 2 (K_S \cdot L),
    \]
    which for a K3 surface is always odd (!). Thus $[\overline{\Sigma_{2}}]\neq 0 \in H^2(\Sigma, \mathbb Z/ 2\mathbb Z)$, and the claim follows.
\end{proof}

The rest of this section is devoted to studying two K3 surfaces of low degrees and the corresponding Brauer group. In particular, we shall see that Theorem~\ref{thm: brauer k3} fails for them.

\subsection{Degree $2$.} Consider a double cover $f: S \to \mathbb P^2$ branched at a general smooth sextic curve $E$, i.e.~a K3 surface of Picard rank one and degree two, polarized by the pull-back $L$ of the hyperplane class $\mathcal O(1)$ on $\mathbb P^2$. 
We first observe that there is a homeomorphism between the following two spaces:
\[
\Sigma = \{(C, p) \mid p \in \mathrm{Sing}(C)\} \subset |L| \times S
\]
and 
\[
\{(l, q) \mid \mathrm{mult}_q(l \cap E) \geq 2\} \subset |\mathcal O(1)| \times \mathbb P^2,
\]
via the push-forward by $f$. Note that the latter space is nothing but $E$, via the homeomorphism sending $q \in E$ to $(T_q E, q)$. It follows that $\Sigma \simeq E$ as topological spaces. One sees also $\Delta \simeq E^\vee$, the dual curve of $E$.

A curve $C$ in $|L|$ can have at most three isolated singularities. We thus have the stratification
\[\begin{tikzcd}
	{\Sigma_{2} \sqcup \Sigma_3} & \Sigma & {\Sigma_1} \\
	{\Delta_{2} \sqcup \Delta_3} & \Delta & {\Delta_1}
	\arrow[hook, from=1-1, to=1-2]
	\arrow[from=1-1, to=2-1]
	\arrow[from=1-2, to=2-2]
	\arrow[hook', from=1-3, to=1-2]
	\arrow[from=1-3, to=2-3]
	\arrow[hook, from=2-1, to=2-2]
	\arrow[hook', from=2-3, to=2-2]
\end{tikzcd}\]
where $\Delta_k$ are now exactly the loci of curves with $k$ singular points. Note also that $\Delta_2$ and $\Delta_3$ consist of isolated closed points. Following Lemma~\ref{lemma: br isom to ker}, we have a short exact sequence
\[
0\to H_1(\Sigma) \to \mathrm{Br}(|L|_\mathrm{sm})[p] \to \ker\left(H_0(\Sigma_{2} \sqcup \Sigma_3) \to H_0 (\Sigma) \oplus H_0(\Delta_{2}\sqcup \Delta_3) \right) \to 0
\]
where the homology is taken with $\mathbb Z/p \mathbb Z$-coefficient. Now $H_1(\Sigma)\simeq H_1(E)$ is 20-dimensional as a $\mathbb F_p$-vector space; moreover, the $\mathbb F_p$-dimension of the kernel is at least
\[
|\Sigma_2 \sqcup \Sigma_3| - |\Delta_2 \sqcup \Delta_3| - 1 = |\Delta_2| + 2 |\Delta_3| - 1.
\]
Now it is classical that there are exactly $324$ bitangents of $E$, and accordingly $324$ binodal curves in $\Delta_2$. We thus see that $\mathrm{Br}(|L_\mathrm{sm}|)[p]$ is at least $343$-dimensional for every prime $p$. 

\subsection{Degree $4$.} \label{sec: degree 4 K3} Consider a general smooth quartic surface in $\mathbb P^3$, i.e.~a K3 surface of Picard rank one and degree four, polarized by the pull-back $L$ of the hyperplane class $\mathcal{O}(1)$ on $\mathbb P^3$. We verify first that the ampleness condition \ref{defn: ampleness} is actually satisfied. Assumptions (i) and the second part of (ii) are automatic. For the rest, we argue once again that the $k${-nodal} loci for $k=1,2$ are irreducible and dense in the $\{\delta = k\}$ loci. The denseness statement is \cite[Corollary~1.2]{ChenXi}. The irreducibility for $k=1$ follows from that of the discriminant divisor, while for $k=2$ it is \cite[Theorem~9.2]{Ciliberto-Dedieu-Limits}. Finally, (iv) is given by \cite{doublepointcurve}. Thus $\mathrm{Br}(|L|_\mathrm{sm})$ is described by Theorem~\ref{thm:brauer_base}. Note, however, that Lemma~\ref{lemma: relative degree} does not apply since $\dim |L| = 3$.

We now study the universal singular locus $\Sigma$. If $p$ is a singular point on a hyperplane section $C = S \cap H$, then we have
\[
\dim T_p C \geq 2 \textnormal{ \:\:and \:} H \supset T_p C \subset T_p S.
\]
It follows that $H = T_p S$, and thus the map
\[
S \to \Sigma, \quad p \mapsto (p, (T_p S) \cap S)
\]
is an isomorphism. To describe the universal $2$-nodal locus, we recall the Gauss map
\[
\gamma: S \to (\mathbb P^3)^\vee, \quad p \mapsto T_p S.
\]
Two nodes $p, q \in \Sigma$ lie on the same hyperplane section if and only if $\gamma(p) = \gamma(q)$. Let $\mathbb D(\gamma) \in \mathrm{CH}_1(S)$ be the \textit{double point class} in the sense of \cite[Chapter~9.3]{Fulton}, viewed as a cohomology class, and let $R(\gamma)$ be the ramification locus of $\gamma$. Note that a general quartic K3 surface has only finitely many (in fact, $3200$) tri-tangent planes, so $\Sigma_2$ maps birationally to the double point set in $S$ (in the terminology of \cite{Fulton}). We thus have
\[
\mathbb D(\gamma) = [\overline{\Sigma_2}] + 2 R(\gamma).
\]

Now by loc.~cit.~$\mathbb D(\gamma)$ is computed by the formula
\begin{align*}
\mathbb D(\gamma) &= \gamma^* {\gamma}_*[S] - \big(c(\gamma^* T_{\mathbb {P}^3}) \cdot c(T_S)^{-1}\big)_1 \cap [S] = 108 L - 12 L = 96 L,
\end{align*}
where we used that the degree of $S^\vee$ is $4\cdot 3^2 = 36$, and that the Gauss map is given by cubics. The ramification locus is given by the Hessian matrix of the defining equation of $S$ and equals $8 L$. We conclude that $[\overline{\Sigma_2}] = 80 L$
and thus vanishes in $H^2(\Sigma, \mathbb Z/ 2 \mathbb Z)$. Theorem~\ref{thm:brauer_base} then gives
\[
\mathrm{Br}(|L|_\mathrm{sm}) \simeq \mathbb Z/ 2\mathbb Z.
\]

\begin{rmk}\label{rmk: lagrangian smooth locus}
    For a polarized K3 surface $(S,L)$ as in Theorem~\ref{thm: brauer k3}, if we pick a suitable Mukai vector ${v}\in H^*(S, \mathbb Z)$, the linear system $|L|$ can be viewed as the base of a Lagrangian fibration
\[
h: M_L({v}) \to |L|
\]
of K3$^{[n]}$-type, where $M_L(v)$ is the moduli space of (Gieseker) $L$-semistable sheaves on $S$ with Mukai vector $v$. Denote by $h_\mathrm{sm}\subset |L|$ the smooth locus of $h$; this contains $|L|_\mathrm{sm}$\footnote{In fact, one can show that $h_{\mathrm{sm}} = |L|_{\mathrm{sm}}$, but this is not needed for the argument.} and we have inclusions
\[
\mathrm{Br}(h_\mathrm{sm}) \hookrightarrow \mathrm{Br}(|L|_\mathrm{sm}) \hookrightarrow \mathrm{Br}(K(|L|)).
\]
Theorem~\ref{thm: brauer k3} thus implies $\mathrm{Br}(h_\mathrm{sm}) = 0$. It is an interesting question\footnote{{We thank Daniel Huybrechts for asking it, which led us to consider cubic fourfolds and the LSV fibration.}} whether this vanishing holds for general Lagrangian fibrations (assuming the dimension of the base is sufficiently large to avoid the counterexamples above). We study this for the Laza--Saccà--Voisin fibration of OG10 type \cite{LSV} in the next section.
\end{rmk}

\section{Cubic fourfolds and the smooth locus of the LSV fibration} \label{sec: cubic fourfold}

We prove Theorem~\ref{thm: brauer cubic fourfold} in this section. Let $X$ be a general smooth cubic fourfold and let $L = \mathcal{O}_X(1)$. Note that, as in Section~\ref{sec: degree 4 K3}, the discriminant locus $\Delta \subset |\mathcal O_X(1)| \simeq (\mathbb P^5)^\vee$ is the dual variety of $X$, i.e., the image of $X$ under the Gauss map
\begin{align*}
    \gamma_X \colon X &\to (\mathbb P^5)^\vee \\
    p &\mapsto T_p X.
\end{align*}
\begin{lem}\label{lem: dagger cubic}
    The line bundle $L$ satisfies the condition \ref{defn: ampleness}.
\end{lem}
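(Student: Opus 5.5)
We check the four conditions of \ref{defn: ampleness} for $L=\mathcal O_X(1)$ on a general cubic fourfold $X\subset\mathbb P^5$, reading everything through the Gauss map $\gamma_X$.

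\textbf{Conditions (i) and (ii).} Condition (i) is immediate. Since $X$ is smooth, $\Sigma=\{(H,p)\mid H=T_pX\}\simeq X$, and $\gamma_X$ is finite; for a smooth cubic it is in fact birational onto the dual $\Delta=X^\vee$, which is a hypersurface of degree $3\cdot 2^4=48$. Hence a general tangent hyperplane section has exactly one singular point, which is an ordinary node because the Hessian is generically nondegenerate. This gives the density of $\Delta_1$.

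For the second part of (ii), a hyperplane section $X\cap H$ with infinitely many singular points gives a positive-dimensional fibre of $\gamma_X$. For general $X$ we expect only finitely many such $H$, and possibly none. Finiteness already suffices, because $\operatorname{codim}(\Delta_\infty,\Delta)\ge 4 = \dim X$ when $\Delta_\infty$ is finite, and we need $\ge 5$. So we argue that there are none. The incidence of pairs (cubic, hyperplane) for which the cubic threefold $X\cap H$ is singular along a curve has codimension well above $\dim(\mathbb P^5)^\vee=5$ in the parameter space: already singularity along a line imposes $8$ conditions, more than $4+5$ minus the $\dim$ of lines in $H$. A parameter count therefore shows that a general $X$ has no such hyperplane section.

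\textbf{Condition (iii).} $\Delta_{\ge2}$ is the image of the double point locus of $\gamma_X$, i.e.\ the set of bitangent hyperplanes. A parameter count gives the expected dimension $3$, so this locus is a divisor in $\Delta$ (of dimension $4$). For irreducibility we use the incidence
\[
\{(p,q,H)\mid p\ne q,\ H\supset T_pX+T_qX\}.
\]
A cleaner route is to fix two distinct points $p,q\in\mathbb P^5$ and a hyperplane $H\ni p,q$, and look at the space of cubics singular at $p,q$ on $H$. This is a linear system of constant codimension, because cubics are $3$-jet ample on $\mathbb P^4$ and so impose independent conditions. The universal family over the space of triples $(p,q,H)$ is therefore a projective bundle over an irreducible base and hence irreducible. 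Taking the general fibre over the space of cubics then gives irreducibility of the universal $2$-nodal locus for general $X$, via a monodromy or Bertini-type argument. This simultaneously yields (iv) and the irreducibility part of (iii), since $\overline{\Sigma_2}$ surjects onto $\overline{\Delta_2}$.

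To show $\overline{\Delta_2}$ is the only full-dimensional component of $\Delta_{\ge2}$, we dimension-count the other strata, namely cusps ($A_2$ and worse) and $\ge 3$ nodes. Each imposes at least one further condition, so for general $X$ these strata have dimension $\le 2$.

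\textbf{Main obstacle.} The hard step is passing from irreducibility of the universal incidence over all cubics to irreducibility of the fibre over a general $X$, i.e.\ proving that the monodromy exchanging the two nodes is transitive. Here we would imitate the K3 tacnode argument. We first exhibit a hyperplane section of $X$ with a tacnode ($A_3$), which exists generically in codimension $2$ of $\Delta$ for general $X$ by a dimension count. We then check that the family of hyperplane sections surjects onto the versal deformation of the $A_3$ singularity. The local model $y^2=(x^2-t)^2$, plus squares in the extra variables, then exchanges the two nodes, which shows that the double cover $\overline{\Sigma_2}\to\overline{\Delta_2}$ is connected.
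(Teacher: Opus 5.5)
\textbf{Gap in (iii) and (iv).} Your proposal never proves that $\overline{\Delta_2}$ and $\overline{\Sigma_2}$ are irreducible for a \emph{fixed} general $X$.

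Irreducibility of the universal incidence $\{(X,p,q,H)\}$ over $|\mathcal O_{\mathbb P^5}(3)|$ does not descend to the general fibre. A dominant morphism from an irreducible variety can have reducible general fibres whose components are permuted by monodromy. No Bertini-type theorem applies to fibres of an arbitrary morphism.

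The tacnode argument does not repair this. Grant the existence of tacnodal sections and the versality you only assert. The local model $y^2=(x^2-t)^2$ then shows merely that the node-swapping involution preserves the part of $\overline{\Sigma_2}$ over the component of $\overline{\Delta_2}$ through the tacnodal point, i.e.\ that the double cover is non-split there. It says nothing about how many components $\overline{\Delta_2}$ has. Since the monodromy of the family of cubics commutes with the involution, combining the two arguments does not bound that number either. In the K3 section the paper runs the tacnode argument only \emph{after} importing irreducibility of the $2$-nodal Severi variety from the literature. Your proposal has no substitute for that input.

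\textbf{The missing idea.} This is what the paper uses. Suppose $X\cap H$ is singular at distinct points $p_1,p_2$. The line $\ell=\overline{p_1p_2}$ meets the cubic $X\cap H$ with multiplicity at least $4>3$, so $\ell\subset X$. Since $\gamma_X(p_1)=\gamma_X(p_2)=H$, the map $\gamma_X|_\ell$ (given by quadrics) is $2$-to-$1$, so $\ell$ is a line of the second type. Conversely, a general point of a second-type line has a partner on it with the same tangent hyperplane.

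Hence $\Sigma_{\ge 2}=W$, the union of second-type lines. By Huybrechts' book on cubic hypersurfaces (Cor.~2.2.14), $W$ is an irreducible threefold for general $X$. This yields (iv), and the irreducibility and dimension in (iii), at once. Density of $\Delta_2$ in $\Delta_{\ge 2}$ is then taken from Gounelas--Kouvidakis (Cor.~3.7).

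\textbf{Two smaller points.} In (ii) no parameter count is needed: $\operatorname{Sing}(X\cap H)=\gamma_X^{-1}(H)$ and $\gamma_X$ is finite, so $\Delta_\infty=\emptyset$. In (iii), your claim that the cusp stratum has dimension $\le 2$ is false. That stratum is the image of the divisor in $X$ where the second fundamental form degenerates, so it is $3$-dimensional. This is harmless only because a section with a single cusp has one singular point, so it lies in $\Delta_1$, not in $\Delta_{\ge 2}$.
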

\begin{proof}
By definition $L$ is very ample, and condition (ii) in \ref{defn: ampleness} follows from the fact that $\gamma_X$ is finite and birational; see \cite[Corollary~10.26]{3264andallthat}.

For (iii), we claim that $\Delta_{\geq 2}$ agrees with the image in $(\mathbb P^5)^\vee$ of the locus of points of $X$ contained in lines of the second type, i.e., lines on which the restriction of the Gauss map $\gamma_X$ is $2$-to-$1$; see \cite[Section~2.2]{geometryofcubics}. Indeed, let $H \subset \mathbb P^5$ denote a hyperplane such that $X \cap H$
 has at least two distinct singular points $p_1, p_2 \in \Sing(X \cap H)$. Then, the line $l \subset \mathbb P^5$ through $p_1$ and $p_2$ intersects $X$ with degree at least $4$ and is thus contained in $X$. Furthermore, we have $\gamma_X(p_1) = \gamma_X(p_2) = H$ and thus the restriction of the Gauss map $\gamma_X|_l$ is of degree two, i.e., $l$ is a line of the second type on $X$.
 
 Conversely, if $l$ is a line of the second type and $p_1 \in l$ is a general point, then there is another point $p_2 \in l$ with $\gamma_X(p_1) = \gamma_X(p_2).$ The associated hyperplane section is thus singular at $p_1$ and $p_2$ and corresponds to a point in $\Delta_{\geq 2}$.
 
 All in all, we see that $\Delta_{\geq 2}$ agrees with the image in $(\mathbb P^5)^\vee$ of the locus $W\subset X$ of points contained in lines of the second type. By \cite[Corollary~2.2.14]{geometryofcubics}, this locus is irreducible of dimension $3$ for a general cubic fourfold. It follows from \cite[Corollary~3.7]{gounelaskouvidakis} that the general hyperplane section with more than one singularity has exactly two singular points, hence $\Delta_2 \subset \Delta_{\geq 2}$ is a dense subset and condition (iii) in \ref{defn: ampleness} is satisfied. The above argument also shows that $\Sigma_{\geq 2} = W$; it is irreducible, which gives (iv).
\end{proof}

\begin{thm}[{Theorem~\ref{thm: brauer cubic fourfold}}]\label{thm: cubic fourfold main text}
    Let $X$ be a general cubic fourfold. Then, we have
    $$\Br(|\mathcal O_X(1)|_{\mathrm{sm}}) = 0.$$
\end{thm}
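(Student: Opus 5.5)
By Lemma~\ref{lem: dagger cubic}, $L=\mathcal O_X(1)$ satisfies \ref{defn: ampleness}. A smooth cubic fourfold is simply connected, so Theorem~\ref{thm:brauer_base} applies. It therefore suffices to show
\[
[\overline{\Sigma_2}] \neq 0 \in H^2(\Sigma, \mathbb Z/2\mathbb Z).
\]
Here $\Sigma \simeq X$ via $p \mapsto (T_pX \cap X, p)$, exactly as in the degree-$4$ K3 case. A singular point $p$ of a hyperplane section $X\cap H$ forces $H = T_pX$. By Lefschetz, $H^2(X,\mathbb Z)=\mathbb Z h$, with $h$ the hyperplane class, and $H^4$ has no torsion issues. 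Under this identification, the proof of Lemma~\ref{lem: dagger cubic} shows that $\overline{\Sigma_2}$ is the threefold $W\subset X$ swept out by lines of the second type. So the task is to compute the integer $a$ with $[W]=a\,h$ and show that $a$ is odd.

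I see two routes to $a$, and I would try the first one first.

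\emph{Route 1: cite the literature.} The class of the surface $F_2\subset F(X)$ of lines of second type is known \cite{geometryofcubics}:
\[
[F_2]=5(g^2-c),
\]
where $g$ is the Plücker class and $c=c_2$ of the tautological quotient bundle restricted to $F(X)$. The class of $W$ is then $p_*q^*[F_2]$, computed through the universal line $P\xrightarrow{q}F(X)$, $P\xrightarrow{p}X$. To obtain $a$, intersect with $h^3$:
\[
a\cdot\deg X = a\cdot 3=\int_X [W]\,h^3=\int_{F_2} q_*p^*h^3 .
\]
Here $q_*p^*h^3 = q_*(\text{points of } \ell \text{ on a fixed codim-3 linear section}) = \sigma_{2}$-type class. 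This reduces the computation to a Schubert computation on $G(2,6)$ restricted to $F(X)$, using $[F(X)]=c_4(\mathrm{Sym}^3 S^\vee)$.

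\emph{Route 2: double point formula.} Alternatively, mimic the quartic computation with the double point formula for $\gamma_X\colon X\to(\mathbb P^5)^\vee$. Now $\gamma_X$ is given by quadrics and $\deg X^\vee = 3\cdot 2^4=48$, which gives
\[
\mathbb D(\gamma)=\gamma^*\gamma_*[X] - \big(c(\gamma^*T_{\mathbb P^5})c(T_X)^{-1}\big)_1 = 96h - (12h-3h)=87h .
\]
One must then subtract the contribution of the ramification locus. The Hessian locus has class $6h$, but $\gamma_X$ is finite and its ramification sits in codimension one, so the naive formula needs care. Assuming ramification contributes $2R$ as before, we would get
\[
[W]=87h-12h=75h,
\]
which is odd. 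Hence $[\overline{\Sigma_2}]\ne 0$ mod $2$, and Theorem~\ref{thm:brauer_base} gives $\mathrm{Br}=0$.

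The main obstacle is justifying the correction terms in the double point formula. The issues are the ramification multiplicity and the excess contribution from tritangent or degenerate loci, which must not carry odd multiplicity in codimension one. For this reason I would cross-check the parity with Route 1 via the known class of $F_2$. Only the parity of $a$ matters, which gives some robustness.
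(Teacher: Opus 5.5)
Your reduction is the same as the paper's. Lemma~\ref{lem: dagger cubic} and Theorem~\ref{thm:brauer_base} (the paper just reruns that argument with $\Sigma\simeq X$) reduce everything to the parity of $a$ in $[W]=a\,h$.

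The difference is how you get $a$. The paper does not compute it; it quotes $[W]=75\,[\mathcal O_X(1)]$ from \cite[Lemma~2.6~(8)]{gounelaskouvidakis}. Your Route~2 is the paper's own quartic-K3 double point computation transplanted to $\gamma_X$, and your numbers are right: $\gamma_X^*\mathcal O(1)=2h$, $\deg X^\vee=48$, $\mathbb D(\gamma_X)=96h-9h=87h$, and the Hessian divisor is $6h$. This gives $75h$, which agrees with the citation. It buys a self-contained derivation, at the cost of the justification you flag. That justification is lighter than you fear; you need three things.
\begin{enumerate}
\item The double point scheme must have the expected dimension $3$, so that $\mathbb D(\gamma_X)$ is an effective cycle supported on $W\cup R$. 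Finiteness of $\gamma_X$ handles the off-diagonal part; over the diagonal you need the corank-$\geq 2$ locus of $d\gamma_X$ to have dimension at most $1$.
\item $W$ must enter with multiplicity one. A general $p_1\in W$ has a unique partner $p_2$; this is the Gounelas--Kouvidakis fact already used in Lemma~\ref{lem: dagger cubic}. The two sheets of $X^\vee$ at $T_{p_1}X=T_{p_2}X$ are transverse, since by reflexivity their tangent hyperplanes are the distinct points $p_1,p_2$. This needs a general point of $W$ to lie off $R$.
\item Since $[R]=6h$ is already even, the exact ramification multiplicity is irrelevant mod $2$, as long as all components of $R$ enter with the same multiplicity. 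The cuspidal-edge model gives $2$, as in the paper's quartic case.
\end{enumerate}
Points with three or more partners occur in codimension at least two and do not matter.

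A warning about Route~1, which you meant to try first. In $[F_2]=5(g^2-c)$, the class $c$ must be $c_2$ of the rank-two tautological bundle, i.e.\ $c=\sigma_{11}|_F$, with $\int_F g^4=108$, $\int_F g^2c=45$, $\int_F c^2=27$. It is not $c_2$ of the rank-four quotient.
\begin{itemize}
\item With the correct $c$, $[F_2]=5\,\sigma_2|_F$. Using $q_*p^*h^3=\sigma_2|_F$, you get $3a=5\int_F\sigma_2^2=5(108-90+27)=225$, so $a=75$.
\item With $c=\sigma_2|_F$ you would get $3a=5\int_F\sigma_{11}\sigma_2=90$, so $a=30$, which has the wrong parity.
\end{itemize}
Route~1 also needs $q^{-1}(F_2)\to W$ to be birational. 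This holds because two lines of the second type through a general $p\in W$ would give $T_pX\cap X$ three singular points. That contradicts the fact that a general member of $\Delta_{\geq 2}$ has exactly two.
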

\begin{proof}
    By Lemma \ref{lem:brauer_bm}, it suffices to show
    $$H_{\mathrm{top}-1}(\gamma_X(X), \mathbb Z / p \mathbb Z) = 0$$
    for all primes $p$. 
    By the proof of Lemma \ref{lem: dagger cubic}, the restriction of the Gauss map to the complement of the set $W \subset X$ of points contained in lines of the second type is bijective. Since $H^1(X, \mathbb Z / p \mathbb Z) = 0$, we have
    $$H_{\mathrm{top}-1}(\gamma_X(X), \mathbb Z /p \mathbb Z) \simeq \ker(H_{\mathrm{top}}(W, \mathbb Z / p \mathbb Z) \to H^2(X, \mathbb Z / p \mathbb Z) \oplus H_{\mathrm{top}}(\gamma_X(W), \mathbb Z / p \mathbb Z)).$$
    Since $W$ is irreducible and $\gamma_X|_W \colon W\to \gamma_X(W)$ is generically of degree two,
    it remains to show that the class $[W] \in H^2(X, \mathbb Z/2 \mathbb Z) \cong \mathbb Z / 2\mathbb Z \cdot [\mathcal O_X(1)]$ is non-zero. By \cite[Lemma~2.6~(8)]{gounelaskouvidakis}, we have $[W] = 75 [\mathcal O_X(1)]$, and hence
    $$\Br(|\mathcal O_X(1)|_{\mathrm{sm}})[p]\simeq H_{\mathrm{top}-1}(\gamma_X(X), \mathbb Z / p \mathbb Z) =  0$$
    for all primes $p$ as desired.
\end{proof}

\begin{rmk}
    By \cite{LSV}, the intermediate Jacobian fibration associated to the cubic fourfold $X$ can be compactified into a hyperkähler variety $\overline{\mathcal{J}}$ of OG10 type, with a Lagrangian fibration 
    \[
    h: \overline{\mathcal{J}} \to |\mathcal{O}_X(1)|.
    \]
    See also \cite{Bottini} for a modular description of $\overline{\mathcal{J}}$ parametrizing semistable torsion sheaves on the Fano variety of lines of $X$. Theorem~\ref{thm: cubic fourfold main text} thus implies the smooth locus of $h$ has trivial Brauer group, cf.~Remark \ref{rmk: lagrangian smooth locus}. Similar to the situation there, one can show that the smooth locus of $h$ coincides with the smooth locus of the linear system $|\mathcal O_X(1)|,$ but we only need the inclusion $|\mathcal O_X(1)|_{\mathrm{sm}} \subset h_{\mathrm{sm}}$, which is immediate from the construction in \cite{LSV}.
\end{rmk}

\printbibliography

@article {Knutsen,
    AUTHOR = {Knutsen, Andreas Leopold},
     TITLE = {On {$k$}th-order embeddings of {$K3$} surfaces and {E}nriques
              surfaces},
   JOURNAL = {Manuscripta Math.},
  FJOURNAL = {Manuscripta Mathematica},
    VOLUME = {104},
      YEAR = {2001},
    NUMBER = {2},
     PAGES = {211--237},
      ISSN = {0025-2611,1432-1785},
   MRCLASS = {14C20 (14J28)},
  MRNUMBER = {1821184},
MRREVIEWER = {Hiroyuki\ Terakawa},
       DOI = {10.1007/s002290170040},
       URL = {https://doi.org/10.1007/s002290170040},
}

@article {AC,
    AUTHOR = {Arbarello, Enrico and Cornalba, Maurizio},
     TITLE = {A few remarks about the variety of irreducible plane curves of
              given degree and genus},
   JOURNAL = {Ann. Sci. \'Ecole Norm. Sup. (4)},
  FJOURNAL = {Annales Scientifiques de l'\'Ecole Normale Sup\'erieure.
              Quatri\`eme S\'erie},
    VOLUME = {16},
      YEAR = {1983},
    NUMBER = {3},
     PAGES = {467--488},
      ISSN = {0012-9593},
   MRCLASS = {14H10},
  MRNUMBER = {740079},
MRREVIEWER = {A.\ Hirschowitz},
       URL = {http://www.numdam.org/item?id=ASENS_1983_4_16_3_467_0},
}

@article {Z1,
    AUTHOR = {Zariski, Oscar},
     TITLE = {Dimension-theoretic characterization of maximal irreducible
              algebraic systems of plane nodal curves of a given order
              {$n$}\ and with a given number {$d$}\ of nodes},
   JOURNAL = {Amer. J. Math.},
  FJOURNAL = {American Journal of Mathematics},
    VOLUME = {104},
      YEAR = {1982},
    NUMBER = {1},
     PAGES = {209--226},
      ISSN = {0002-9327,1080-6377},
   MRCLASS = {14N10 (14H10 14N05)},
  MRNUMBER = {648487},
MRREVIEWER = {Abramo\ Hefez},
       DOI = {10.2307/2374074},
       URL = {https://doi.org/10.2307/2374074},
}

@article {Harris,
    AUTHOR = {Harris, Joe},
     TITLE = {On the {S}everi problem},
   JOURNAL = {Invent. Math.},
  FJOURNAL = {Inventiones Mathematicae},
    VOLUME = {84},
      YEAR = {1986},
    NUMBER = {3},
     PAGES = {445--461},
      ISSN = {0020-9910,1432-1297},
   MRCLASS = {14H10 (14H20)},
  MRNUMBER = {837522},
MRREVIEWER = {R.\ F.\ Lax},
       DOI = {10.1007/BF01388741},
       URL = {https://doi.org/10.1007/BF01388741},
}

@article {Roberts,
    AUTHOR = {Roberts, M. Samuel},
     TITLE = {Sur l'ordre des conditions de la coexistence des \'equations
              alg\'ebriques \`a{} plusieurs variables},
   JOURNAL = {J. Reine Angew. Math.},
  FJOURNAL = {Journal f\"ur die Reine und Angewandte Mathematik. [Crelle's
              Journal]},
    VOLUME = {67},
      YEAR = {1867},
     PAGES = {266--278},
      ISSN = {0075-4102,1435-5345},
   MRCLASS = {99-04},
  MRNUMBER = {1579373},
       DOI = {10.1515/crll.1867.67.266},
       URL = {https://doi.org/10.1515/crll.1867.67.266},
}

@article {Vainsencher,
    AUTHOR = {Vainsencher, Israel},
     TITLE = {Counting divisors with prescribed singularities},
   JOURNAL = {Trans. Amer. Math. Soc.},
  FJOURNAL = {Transactions of the American Mathematical Society},
    VOLUME = {267},
      YEAR = {1981},
    NUMBER = {2},
     PAGES = {399--422},
      ISSN = {0002-9947,1088-6850},
   MRCLASS = {14N10},
  MRNUMBER = {626480},
MRREVIEWER = {Ragni\ Piene},
       DOI = {10.2307/1998661},
       URL = {https://doi.org/10.2307/1998661},
}

@article {Dedieu-Sernesi,
    AUTHOR = {Dedieu, T. and Sernesi, E.},
     TITLE = {Equigeneric and equisingular families of curves on surfaces},
   JOURNAL = {Publ. Mat.},
  FJOURNAL = {Publicacions Matem\`atiques},
    VOLUME = {61},
      YEAR = {2017},
    NUMBER = {1},
     PAGES = {175--212},
      ISSN = {0214-1493,2014-4350},
   MRCLASS = {14C20 (14H10 14H20 14J26 14J28)},
  MRNUMBER = {3590119},
MRREVIEWER = {Yusuf\ Mustopa},
       DOI = {10.5565/PUBLMAT\_61117\_07},
       URL = {https://doi.org/10.5565/PUBLMAT_61117_07},
}

@misc{Bruno-Lelli-Chiesa,
 author = {Andrea Bruno and Margherita Lelli-Chiesa},
 title = {Irreducibility of {Severi} varieties on {K3} surfaces},
 year = {2021},
 howpublished = {Preprint, {arXiv}:2112.09398},
 url = {https://arxiv.org/abs/2112.09398},
 arXiv = {arXiv:2112.09398}
}

@article {Ciliberto-Dedieu,
    AUTHOR = {Ciliberto, Ciro and Dedieu, Thomas},
     TITLE = {On the irreducibility of {S}everi varieties on {$K3$}
              surfaces},
   JOURNAL = {Proc. Amer. Math. Soc.},
  FJOURNAL = {Proceedings of the American Mathematical Society},
    VOLUME = {147},
      YEAR = {2019},
    NUMBER = {10},
     PAGES = {4233--4244},
      ISSN = {0002-9939,1088-6826},
   MRCLASS = {14C20 (14J28)},
  MRNUMBER = {4002538},
MRREVIEWER = {Yusuf\ Mustopa},
       DOI = {10.1090/proc/14559},
       URL = {https://doi.org/10.1090/proc/14559},
}

@article {DiLorenzo-Pirisi,
    AUTHOR = {Di Lorenzo, A. and Pirisi, R.},
     TITLE = {The {B}rauer groups of moduli of genus three curves, abelian
              threefolds and plane curves},
   JOURNAL = {Compos. Math.},
  FJOURNAL = {Compositio Mathematica},
    VOLUME = {161},
      YEAR = {2025},
    NUMBER = {7},
     PAGES = {1664--1697},
      ISSN = {0010-437X,1570-5846},
   MRCLASS = {14F22 (14H10)},
  MRNUMBER = {4961003},
MRREVIEWER = {Igor\ A.\ Rapinchuk},
       DOI = {10.1112/S0010437X25007481},
       URL = {https://doi.org/10.1112/S0010437X25007481},
}

@book {Ciliberto-Dedieu-Limits,
    AUTHOR = {Ciliberto, Ciro and Dedieu, Thomas},
     TITLE = {Limits of pluri-tangent planes to quartic surfaces, \textup{Algebraic and complex geometry}},
 BOOKTITLE = {Algebraic and complex geometry},
    SERIES = {Springer Proc. Math. Stat.},
    VOLUME = {71},
     PAGES = {123--199},
 PUBLISHER = {Springer, Cham},
      YEAR = {2014},
      %ISBN = {978-3-319-05404-9; 978-3-319-05403-2},
   MRCLASS = {14J28 (14B07)},
  MRNUMBER = {3278573},
MRREVIEWER = {Trygve\ Johnsen},
       DOI = {10.1007/978-3-319-05404-9\_6},
       URL = {https://doi.org/10.1007/978-3-319-05404-9_6},
}

@article {ChenXi,
    AUTHOR = {Chen, Xi},
     TITLE = {Nodal curves on {K}3 surfaces},
   JOURNAL = {New York J. Math.},
  FJOURNAL = {New York Journal of Mathematics},
    VOLUME = {25},
      YEAR = {2019},
     PAGES = {168--173},
      ISSN = {1076-9803},
   MRCLASS = {14J28 (14E05)},
  MRNUMBER = {3904883},
MRREVIEWER = {Kenta\ Watanabe},
}

@book {Fulton,
    AUTHOR = {Fulton, William},
     TITLE = {Intersection theory},
    SERIES = {Ergebnisse der Mathematik und ihrer Grenzgebiete (3)},
    VOLUME = {2},
 PUBLISHER = {Springer-Verlag, Berlin},
      YEAR = {1984},
     PAGES = {xi+470},
      ISBN = {3-540-12176-5},
   MRCLASS = {14C17 (14-02 14C40)},
  MRNUMBER = {732620},
MRREVIEWER = {Werner\ Kleinert},
       DOI = {10.1007/978-3-662-02421-8},
       URL = {https://doi.org/10.1007/978-3-662-02421-8},
}

@article {LSV,
    AUTHOR = {Laza, Radu and Sacc\`a, Giulia and Voisin, Claire},
     TITLE = {A hyper-{K}\"ahler compactification of the intermediate
              {J}acobian fibration associated with a cubic 4-fold},
   JOURNAL = {Acta Math.},
  FJOURNAL = {Acta Mathematica},
    VOLUME = {218},
      YEAR = {2017},
    NUMBER = {1},
     PAGES = {55--135},
      ISSN = {0001-5962,1871-2509},
   MRCLASS = {53C26 (14J10 14J35 32Q15 32S99)},
  MRNUMBER = {3710794},
MRREVIEWER = {Grzegorz\ Kapustka},
       DOI = {10.4310/ACTA.2017.v218.n1.a2},
       URL = {https://doi.org/10.4310/ACTA.2017.v218.n1.a2},
}

@article {computingsym2011,
    AUTHOR = {Bernardi, Alessandra and Gimigliano, Alessandro and Id\`a,
              Monica},
     TITLE = {Computing symmetric rank for symmetric tensors},
   JOURNAL = {J. Symbolic Comput.},
  FJOURNAL = {Journal of Symbolic Computation},
    VOLUME = {46},
      YEAR = {2011},
    NUMBER = {1},
     PAGES = {34--53},
      ISSN = {0747-7171,1095-855X},
   MRCLASS = {14N05},
  MRNUMBER = {2736357},
       DOI = {10.1016/j.jsc.2010.08.001},
       URL = {https://doi.org/10.1016/j.jsc.2010.08.001},
}

@book {Markman,
    AUTHOR = {Markman, Eyal},
     TITLE = {Lagrangian fibrations of holomorphic-symplectic varieties of
              {$K3^{[n]}$}-type, \textup{Algebraic and complex geometry}},
 BOOKTITLE = {Algebraic and complex geometry},
    SERIES = {Springer Proc. Math. Stat.},
    VOLUME = {71},
     PAGES = {241--283},
 PUBLISHER = {Springer, Cham},
      YEAR = {2014},
      %ISBN = {978-3-319-05404-9; 978-3-319-05403-2},
   MRCLASS = {53C55 (14J28 32G20 32Q15 53D12)},
  MRNUMBER = {3278577},
MRREVIEWER = {Jing\ Zhang},
       DOI = {10.1007/978-3-319-05404-9\_10},
       URL = {https://doi.org/10.1007/978-3-319-05404-9_10},
}

@misc{HM2,
 author = {D. Huybrechts and D. Mattei},
 title = {The Tate--Shafarevich group of a polarised K3 surface},
 year = {2025},
 howpublished = {Preprint, arXiv:2507.22703},
 url = {https://arxiv.org/abs/2507.22703},
 arXiv = {arXiv:2507.22703}
}

@misc{Mattei-Meinsma,
 author = {D. Mattei and R. Meinsma},
 title = {Obstruction classes for moduli spaces of sheaves and Lagrangian fibrations},
 year = {2024},
 howpublished = {Preprint, arXiv:2404.16652v2},
 url = {https://arxiv.org/abs/2404.16652v2},
 arXiv = {arXiv:2404.16652v2}
}

@misc{Bottini,
 author = {Alessio Bottini},
 title = {O'Grady's tenfolds from stable bundles on hyper-Kähler fourfolds},
 year = {2024},
 howpublished = {Preprint, arXiv:2411.18528},
 url = {https://arxiv.org/abs/2411.18528},
 arXiv = {arXiv:2411.18528}
}

@misc{Gounelas-Huybrechts,
 author = {Frank Gounelas and Daniel Huybrechts},
 title = {Universal Brauer--Severi varieties},
 year = {2025},
 howpublished = {Preprint, arXiv:2510.20474},
 url = {https://arxiv.org/abs/2510.20474},
 arXiv = {arXiv:2510.20474}
}

@article {Vassiliev,
    AUTHOR = {Vassiliev, V. A.},
     TITLE = {How to calculate the homology of spaces of nonsingular
              algebraic projective hypersurfaces},
   JOURNAL = {Tr. Mat. Inst. Steklova},
  FJOURNAL = {Trudy Matematicheskogo Instituta Imeni V. A. Steklova},
    VOLUME = {225},
      YEAR = {1999},
     PAGES = {132--152},
      ISSN = {0371-9685,3034-1809},
   MRCLASS = {55R80 (57R19)},
  MRNUMBER = {1725936},
MRREVIEWER = {Sergei\ V.\ Chmutov},
}

@article {AM,
    AUTHOR = {Antieau, Benjamin and Meier, Lennart},
     TITLE = {The {B}rauer group of the moduli stack of elliptic curves},
   JOURNAL = {Algebra Number Theory},
  FJOURNAL = {Algebra \& Number Theory},
    VOLUME = {14},
      YEAR = {2020},
    NUMBER = {9},
     PAGES = {2295--2333},
      ISSN = {1937-0652,1944-7833},
   MRCLASS = {14F22 (11R37 14D23 14H10 14H52)},
  MRNUMBER = {4172709},
MRREVIEWER = {Fabian\ Reede},
       DOI = {10.2140/ant.2020.14.2295},
       URL = {https://doi.org/10.2140/ant.2020.14.2295},
}

@article {Orsola,
    AUTHOR = {Tommasi, Orsola},
     TITLE = {Stable cohomology of spaces of non-singular hypersurfaces},
   JOURNAL = {Adv. Math.},
  FJOURNAL = {Advances in Mathematics},
    VOLUME = {265},
      YEAR = {2014},
     PAGES = {428--440},
      ISSN = {0001-8708,1090-2082},
   MRCLASS = {14F25 (14J70 20G10)},
  MRNUMBER = {3255466},
MRREVIEWER = {Sergio\ L.\ Cacciatori},
       DOI = {10.1016/j.aim.2014.08.005},
       URL = {https://doi.org/10.1016/j.aim.2014.08.005},
}

@article {Collino,
    AUTHOR = {Collino, Alberto},
     TITLE = {The rational equivalence ring of symmetric products of curves},
   JOURNAL = {Illinois J. Math.},
  FJOURNAL = {Illinois Journal of Mathematics},
    VOLUME = {19},
      YEAR = {1975},
    NUMBER = {4},
     PAGES = {567--583},
      ISSN = {0019-2082},
   MRCLASS = {14C15 (14H40)},
  MRNUMBER = {389897},
MRREVIEWER = {Allen\ B.\ Altman},
       URL = {http://projecteuclid.org/euclid.ijm/1256050666},
}

@book {ACGH85,
    AUTHOR = {Arbarello, E. and Cornalba, M. and Griffiths, P. A. and
              Harris, J.},
     TITLE = {Geometry of algebraic curves:~Volume~I},
    SERIES = {Grundlehren der mathematischen Wissenschaften},
    VOLUME = {267},
 PUBLISHER = {Springer-Verlag, New York},
      YEAR = {1985},
     PAGES = {xvi+386},
      ISBN = {0-387-90997-4},
   MRCLASS = {14Hxx (14-02)},
  MRNUMBER = {770932},
MRREVIEWER = {Werner\ Kleinert},
       DOI = {10.1007/978-1-4757-5323-3},
       URL = {https://doi.org/10.1007/978-1-4757-5323-3},
}

@book {3264andallthat,
    AUTHOR = {Eisenbud, David and Harris, Joe},
     TITLE = {3264 and all that---a second course in algebraic geometry},
 PUBLISHER = {Cambridge University Press, Cambridge},
      YEAR = {2016},
     PAGES = {xiv+616},
      %ISBN = {978-1-107-60272-4; 978-1-107-01708-5},
   MRCLASS = {14-01 (14C15 14M15 14N10)},
  MRNUMBER = {3617981},
MRREVIEWER = {Arnaud\ Beauville},
       DOI = {10.1017/CBO9781139062046},
       URL = {https://doi.org/10.1017/CBO9781139062046},
}

@article {gounelaskouvidakis,
    AUTHOR = {Gounelas, Frank and Kouvidakis, Alexis},
     TITLE = {Geometry of lines on a cubic four-fold},
   JOURNAL = {Int. Math. Res. Not. IMRN},
  FJOURNAL = {International Mathematics Research Notices. IMRN},
      YEAR = {2024},
    NUMBER = {2},
     PAGES = {1373--1421},
      ISSN = {1073-7928,1687-0247},
   MRCLASS = {14J35},
  MRNUMBER = {4692375},
MRREVIEWER = {Arnaud\ Beauville},
       DOI = {10.1093/imrn/rnac160},
       URL = {https://doi.org/10.1093/imrn/rnac160},
}

@book {geometryofcubics,
    AUTHOR = {Huybrechts, Daniel},
     TITLE = {The geometry of cubic hypersurfaces},
    SERIES = {Cambridge Studies in Advanced Mathematics},
    VOLUME = {206},
 PUBLISHER = {Cambridge University Press, Cambridge},
      YEAR = {2023},
     PAGES = {xvii+441},
      %ISBN = {978-1-009-28000-6; [9781009280020]},
   MRCLASS = {14J70 (14C30 14J30 14J35)},
  MRNUMBER = {4589520},
MRREVIEWER = {Fei\ Hu},
}

@misc{luan2023irreduciblecomponentshilbertscheme,
      title={Irreducible components of Hilbert scheme of points on non-reduced curves}, 
      author={Yuze Luan},
      year={2023},
      howpublished = {Preprint, arXiv:2210.01170},
      arXiv = {arXiv:2210.01170},
      url={https://arxiv.org/abs/2210.01170}, 
}

@misc{HM1,
 author = {Daniel Huybrechts and Dominique Mattei},
 title = {The special Brauer group and twisted Picard varieties},
 year = {2023},
 howpublished = {Preprint, arXiv:2310.04032v2},
 url = {https://arxiv.org/abs/2310.04032v2},
 arXiv = {arXiv:2310.04032v2}
}

@misc{huybrechts2025periodindexproblemhyperkahlermanifolds,
 title={The period-index problem for hyperk\"ahler manifolds}, 
 author={Daniel Huybrechts},
 year = {2025},
 howpublished = {Preprint, arXiv:2411.17604v2},
 url = {https://arxiv.org/abs/2411.17604v2},
 arXiv = {arXiv:2411.17604v2}
}

@article {brauerquot,
    AUTHOR = {Biswas, Indranil and Dhillon, Ajneet and Hurtubise, Jacques},
     TITLE = {Brauer groups of {Q}uot schemes},
   JOURNAL = {Michigan Math. J.},
  FJOURNAL = {Michigan Mathematical Journal},
    VOLUME = {64},
      YEAR = {2015},
    NUMBER = {3},
     PAGES = {493--508},
      ISSN = {0026-2285,1945-2365},
   MRCLASS = {14F22 (14C05 14D23)},
  MRNUMBER = {3394256},
MRREVIEWER = {J\"{o}rg\ Jahnel},
       DOI = {10.1307/mmj/1441116655},
       URL = {https://doi.org/10.1307/mmj/1441116655},
}

@article {brauersymm,
    AUTHOR = {Iyer, Jaya N. N. and Joshua, Roy},
     TITLE = {Brauer groups of schemes associated to symmetric powers of
              smooth projective curves in arbitrary characteristics},
   JOURNAL = {J. Pure Appl. Algebra},
  FJOURNAL = {Journal of Pure and Applied Algebra},
    VOLUME = {224},
      YEAR = {2020},
    NUMBER = {3},
     PAGES = {1009--1022},
      ISSN = {0022-4049,1873-1376},
   MRCLASS = {14F22 (14C25 14D20 14D23 14F20)},
  MRNUMBER = {4009565},
MRREVIEWER = {WonTae\ Hwang},
       DOI = {10.1016/j.jpaa.2019.06.019},
       URL = {https://doi.org/10.1016/j.jpaa.2019.06.019},
}

@article {Brauerpurity,
    AUTHOR = {\v Cesnavi\v cius, K.},
     TITLE = {Purity for the {B}rauer group},
   JOURNAL = {Duke Math. J.},
  FJOURNAL = {Duke Mathematical Journal},
    VOLUME = {168},
      YEAR = {2019},
    NUMBER = {8},
     PAGES = {1461--1486},
      ISSN = {0012-7094,1547-7398},
   MRCLASS = {14F22 (14F20 14G22 16K50)},
  MRNUMBER = {3959863},
MRREVIEWER = {J\"org\ Jahnel},
       DOI = {10.1215/00127094-2018-0057},
       URL = {https://doi.org/10.1215/00127094-2018-0057},
}

@article {doublepointcurve,
    AUTHOR = {Corvaja, Pietro and Zucconi, Francesco},
     TITLE = {The surface of {G}auss double points},
   JOURNAL = {Eur. J. Math.},
  FJOURNAL = {European Journal of Mathematics},
    VOLUME = {8},
      YEAR = {2022},
    NUMBER = {3},
     PAGES = {871--892},
      ISSN = {2199-675X,2199-6768},
   MRCLASS = {14J25 (14J28 14N25)},
  MRNUMBER = {4498818},
MRREVIEWER = {Raquel\ Mallavibarrena},
       DOI = {10.1007/s40879-021-00456-x},
       URL = {https://doi.org/10.1007/s40879-021-00456-x},
}

@article {Galati-Knutsen,
    AUTHOR = {Galati, Concettina and Knutsen, Andreas Leopold},
     TITLE = {On the existence of curves with {$A_k$}-singularities on
              {$K3$} surfaces},
   JOURNAL = {Math. Res. Lett.},
  FJOURNAL = {Mathematical Research Letters},
    VOLUME = {21},
      YEAR = {2014},
    NUMBER = {5},
     PAGES = {1069--1109},
      ISSN = {1073-2780,1945-001X},
   MRCLASS = {14J28 (14B07 14H10)},
  MRNUMBER = {3294563},
MRREVIEWER = {Trygve\ Johnsen},
       DOI = {10.4310/MRL.2014.v21.n5.a8},
       URL = {https://doi.org/10.4310/MRL.2014.v21.n5.a8},
}

@article {Caporaso-Harris,
    AUTHOR = {Caporaso, Lucia and Harris, Joe},
     TITLE = {Parameter spaces for curves on surfaces and enumeration of
              rational curves},
   JOURNAL = {Compositio Math.},
  FJOURNAL = {Compositio Mathematica},
    VOLUME = {113},
      YEAR = {1998},
    NUMBER = {2},
     PAGES = {155--208},
      ISSN = {0010-437X,1570-5846},
   MRCLASS = {14N10 (14H10 14J26 14M25)},
  MRNUMBER = {1639183},
MRREVIEWER = {Ravi\ D.\ Vakil},
       DOI = {10.1023/A:1000401119940},
       URL = {https://doi.org/10.1023/A:1000401119940},
}

\end{document}